\newcommand{\gen}[1]{\ensuremath{\langle{#1}\rangle}}
\newcommand{\Gstar}{\ensuremath{(G,\ast)}-}
\newcommand{\IdGstar}[1]{\ensuremath{\textnormal{Id}^{(G,\ast)}({#1})}}
\newcommand{\cnGstar}{\ensuremath{c_n^{(G,\ast)}}}
\newtheorem{teorema}{Theorem}[section]
\newtheorem{exemplo}[teorema]{Example}
\newtheorem{lema}[teorema]{Lemma}
\newtheorem{corolario}[teorema]{Corollary}
\newtheorem{observacao}[teorema]{Remark}
\newtheorem{proposicao}[teorema]{Proposition}
\newtheorem{definicao}[teorema]{Definition}
\newcolumntype{C}[1]{>{\centering\let\newline\\\arraybackslash\hspace{0pt}}m{#1}}
\numberwithin{equation}{section}
\begin{document}
	\title{Minimal varieties of algebras with graded involution and quadratic growth}
	
\author[WESLEY Q. COTA AND ANA C. VIEIRA]{WESLEY Q. COTA AND ANA C. VIEIRA$^*$}
	
	\dedicatory{Departamento de Matemática, Instituto de Ciências Exatas, Universidade Federal de Minas Gerais. \\ Avenida Antonio Carlos 6627, 31123-970, Belo Horizonte, Brazil}

  \thanks{{\it E-mail addresses:} wesleyqc@ufmg.br (Cota) and acvufmg2011@gmail.com (Vieira).}

  	\thanks{\footnotesize {$^*$ Corresponding author}}

\thanks{The first author was partially supported by FAPEMIG and the second author was partially supported by FAPEMIG and by CNPq.}

\subjclass[2020]{Primary 16R10, 16R50, Secondary 16W10, 16W50}
	
\keywords{graded involution, codimension growth, minimal varieties}
	
	\begin{abstract} 
Subalgebras of upper triangular matrix algebras have played a fundamental role in the classification of minimal varieties of polynomial growth. Such classification has become a source of study in recent years since it leads to the more general classification of varieties of polynomial growth  $n^k$, as has already been proven in many contexts for several values of $k$.
 In this paper, we study the asymptotic behavior of the sequence of codimensions of algebras graded by a finite group $G$ and endowed with a graded involution $*$, also called $(G,*)$-algebras. 
We classify the minimal varieties generated by a finite-dimensional $(G,*)$-algebra with quadratic growth.
	\end{abstract}
	\maketitle
	
	\section{Introduction}
    The growth of polynomial identities for the algebra $M_n(F)$ of $n\times n$ matrices in characterstic zero was studied by Regev in \cite{RG1} and it has been also considered for any algebra in general. In particular, a finitely generated algebra $A$ over an infinite field $F$ satisfies all identities satisfied by $M_k(F)$, for some $k\geq 1$  \cite[Theorem 1.12.2]{GZlivro}. 
 
 In the last few years, classifications of varieties of algebras according to the behavior of the codimension sequence have been the subject of currently research.  
This sequence was introduced in 1972 by Regev \cite{RG} and for an algebra $A$ over a field of
characteristic zero, its $n$-th term is defined as $c_n(A)= \dim P_n(A)$, where $P_n(A)={P_n}/{P_n\cap \textnormal{Id}(A)}$. Here,  $P_n$ denotes the vector space of multilinear polynomials in the first $n$ non-commuting variables and $\textnormal{Id}(A)$ is the $T$-ideal of identities of $A$.

 Regev also proved that if $A$ is a PI-algebra over a field of characteristic zero, that is if $A$ satisfies a non-trivial identity, then its codimension sequence is exponentially bounded. This means that there exist constants $\beta , \alpha >0$ such that $c_n(A)\leq \beta\alpha ^{n}$, for all $n\geq 1$. In $1979$, Kemer \cite{Kem} established that $c_n(A)\leq qn^t$, for certain constants $q,t\geq 0$ and for all $n\geq 1$, if and only if the infinite-dimensional Grassmann algebra $\mathcal{G}=\langle 1,e_1,e_2, \ldots \mid e_ie_j=-e_je_i\rangle$ and the algebra $UT_2$ of $2\times 2$ upper triangular matrices do not belong to the variety generated by $A$. In the last case, we say that the algebra $A$, or the variety $\textnormal{var}(A)$ generated by $A$, has polynomial growth $n^k$ if $c_n(A)\approx \alpha n^k$, for some $\alpha >0$.

In recent years, many classifications of varieties of polynomial growth $n^k$ have been presented, according to the value of $k$. As examples we cite the classification of varieties of linear growth by Giambruno and La Mattina \cite{GLa} and of the varieties of polynomial growth $n^{k}$, for $k\leq 3$, generated by unitary algebras, by Giambruno, La Mattina and Petrogradsky \cite{Petro}. Moreover, de Oliveira and Vieira \cite{Mara3} presented a list of $16$ algebras generating all varieties generated by unitary algebras with polynomial growth $n^4$.  

The precedent classifications evidenced an important class of algebras, the so-called minimal varieties. We say that a variety $\mathcal{V}$ is minimal of polynomial growth $n^k$ if $c_n(\mathcal{V})\approx \alpha n^k$, for some $\alpha >0$, and for any proper subvariety $\mathcal{U}\subsetneq \mathcal{V}$ we have $c_n(\mathcal{U})\approx \beta n^p$ with $p<k$. In the previous results, the authors proved that the respective classifications consist essentially of varieties generated by a finite direct sum of algebras generating minimal varieties. Motivated by this remark, Giambruno, La Mattina and Zaicev \cite{Minimais} presented a characterization of minimal varieties of polynomial growth $n^k$ and proved that for $k\leq 5$ we have a finitely many minimal varieties of growth $n^k$ but for $k> 5$, we have infinitely many ones.

The previous concepts and results have been extended to the context of algebras with additional structure such as algebras with involution (also called $*$-algebras), superalgebras and algebras with graded involution. 

It is important to emphasize the constant presence of subalgebras of upper triangular matrix algebras in the given classifications in all situations cited above.

The characterization of minimal varieties generated by a unitary algebra of polynomial growth $n^t$ was given by Gouveia, dos Santos and Vieira \cite{tatiana} for the class of $*$-algebras and superalgebras and they proved that there exist finitely many such varieties only if $t\leq 2$ .

The classification of the varieties of $*$-algebras and also varieties of superalgebras  of linear codimension growth were presented respectively by La Mattina and Misso \cite{lamattinamisso} and Giambruno, La Mattina and Misso \cite{GLMisso}. Moreover, in \cite{Dafne} Bessades, dos Santos, Santos and Vieira classified the varieties generated by unitary $*$-algebras and superalgebras with quadratic growth.   We emphasize that, in all the respective situations, the classifications were obtained by the direct sum of algebras generating minimal varieties. 

In this work, we consider $G$ a finite group and $A$ a $*$-algebra graded by $G$. We say that $A$ is a $(G,*)$-algebra if for all  $g\in G$, the homogeneous component $A_g$ of the grading is invariant under the involution. Our goal is to study the behavior of the codimension sequence of $(G,*)$-varieties of polynomial growth.

In \cite{Mara}, de Oliveira, dos Santos and Vieira classified the varieties of $(G,*)$-algebras with at most linear growth and proved that they are generated by a finite direct sum of $(G,*)$-algebras generating minimal $(G,*)$-varieties. The main purpose of this paper is to classify the minimal $(G,*)$-varieties of quadratic growth generated by a finite-dimensional $(G,*)$-algebra, where $G$ is any finite group.

It is worth to mention that in the particular case $G\cong \mathbb{Z}_2$, Ioppolo, dos Santos, Santos and Vieira \cite{mallu} classified the minimal $(\mathbb{Z}_2,*)$-varieties of quadratic growth. Using such classification, Bessades, Costa and Santos \cite{Mallu} concluded the general classification of $(\mathbb{Z}_2,*)$-varieties generated by unitary algebras of quadratic growth. In addition to extending the results in \cite{mallu}, the classification of minimal varieties given here will be useful for presenting the complete classification of $(G,*)$-varieties of quadratic growth in the future.

\section{\texorpdfstring{ Generalities about $(G,*)$-algebras}{Generalities about (G,*)-algebras}}

In all this paper, $A$ denotes an associative algebra over a field $F$ of characteristic zero and $G$ denotes a multiplicative group with unit $1$.
In this section we introduce our main object of study, the so-called $(G,*)$-algebras, we define the $(G,*)$-codimension sequence and recall some results established in the last years regarding the asymptotic behavior of this sequence. 

Recall that a linear map $*:A\rightarrow A$ defined on an algebra $A$ is called an involution if $(ab)^{*}=b^*a^*$ and $(a^*)^*=a$, for all $a, b \in A.$ An algebra $A$ endowed with an involution $*$ is called an algebra with involution or a $*$-algebra. It is known that if $A$ is a $*$-algebra, then we can write $A=A^+\oplus A^-$, where $A^+ = \{a\in A\mid a^*=a\}$ is the subspace of symmetric elements and $A^-= \{a\in A\mid a^*=-a\}$ is the subspace of skew elements. 

For instance, any commutative algebra is a $*$-algebra endowed with the trivial involution $a^*=a$, for all $a\in A$. Moreover, for the subalgebra $M:=F(e_{11}+e_{44})+F(e_{22}+e_{33})+ Fe_{12}+Fe_{34}$ of the $4\times 4$ upper triangular matrices we consider the reflection involution obtained by reflecting a matrix along its secondary diagonal, i.e.
		$$\left( {\begin{array}{*{20}c}
				a & b & 0 & 0  \\
				0 & c & 0 & 0\\
				0 & 0 & c & d  \\
				0 & 0 & 0 & a \\
		\end{array}} \right)^{*}= \left( {\begin{array}{*{20}c}
				a & d & 0 & 0  \\
				0 & c & 0 & 0\\
				0 & 0 & c & b  \\
				0 & 0 & 0 & a \\
		\end{array}} \right).$$

We can define $\textnormal{Id}^*(A)$ the $T_*$-ideal of $A$, consisting of all $*$-identities of $A$ and define $c_n^*(A)$ the $n$-th $*$-codimension of $A$ as an extension of the notion of codimension given in the ordinary case.


We say that $A$ is a $G$-graded algebra if there exist subspaces $A_{g},\, g\in G$, called homogeneous components of degree $g$, satisfying $A=\underset{g\in G}{\bigoplus}A_{g}$ and $A_{g}A_{h}\subseteq A_{gh}$ for all $g,h\in G$. The support of a $G$-graded algebra $A$ as the set $supp(A)=\{g\in G\mid A_g\neq \{0\}\}$.

Notice that any algebra is a $G$-graded algebra endowed with the trivial $G$-grading given by $A_1=A$ and $A_g=\{0\}$, for all $g\in G- \{1\}$. A $G$-grading can be defined on $UT_m$, the algebra of $m\times m$ upper triangular matrices, by considering a $m$-tuple $\mathsf{h}=(h_1, \ldots, h_m) \in G^m$ of elements of $G$ and defining $(UT_m)_{g}= \mbox{span}_F\{e_{ij}\mid h_i^{-1}h_j=g\}$, for all $g\in G.$ This $G$-grading is called an elementary $G$-grading induced by $\mathsf{h}$.

For $g\in G$ with $|g|=p$ a prime number, we denote by $C_p=\langle g \rangle $ the cyclic subgroup of $G$ of order $p$  and $FC_p$ to be the group algebra of $C_p$ over $F$. We consider a $G$-grading on $FC_p$ induced by $C_p$, given by $(FC_p)_{g^i}=F g^i$, for all $0\leq i \leq p-1$, and $(FC_p)_{h}=\{0\}$ for all $h\in G- C_p$.



Similarly, we consider $\textnormal{Id}^G(A)$ the $T_G$-ideal of $A$ and define $c_n^G(A)$ the $n$-th $G$-graded codimension of $A$, as done for the previous structures.

An involution $*$ defined on a $G$-graded algebra $A$ is called a graded involution if it preserves the homogeneous components of $A$, i.e  $A_g^*= A_g$, for all $g\in G$. 
Finally, we can define our main object of study.
 
 \begin{definicao}
     A $G$-graded algebra $A$ endowed with a graded involution $*$ is called a \Gstar algebra. 
 \end{definicao}

It is quite immediately to see that if a $G$-graded algebra $A$ admits a graded involution then $supp(A)$ is a subset of $Z(G)$, the center of $G$. Therefore, from now on, we will assume that $G$ as a finite abelian group of order $t$. In particular, if $G\cong \mathbb{Z}_2$ is the cyclic group of order $2$ then the $(\mathbb{Z}_2,*)$-algebra is known as a $*$-superalgebra. 
 
  For instance, consider the following \Gstar algebras:
\begin{enumerate}

\item[1.] Any $*$-algebra is a $(G,*)$-algebra endowed with the trivial $G$-grading. In particular, we denote by $M_{1,\rho}$ the algebra $M$ with trivial $G$-grading and reflection involution.

 \item[3.] A commutative $G$-graded algebra is a $(G,*)$-algebra with trivial involution. For instance, we consider $FC_p^{G}$ the group algebra $FC_p$ with $G$-grading induced by $C_p$ and trivial involution.

\item[4.] For a group $G$ of even order, we consider $g\in G$ with $|g|=2$ and $C_2=\langle g\rangle $. Denote by $(FC_2)^{(G,*)}$ the group algebra $FC_2$ with $G$-grading induced by $C_2$ and involution given by $(a_1+a_2g)^{*}= a_1-a_2g$. Also, we denote by $(FC_2)_*$ the group algebra $FC_2$ with the trivial $G$-grading and the involution defined before.

\item[5.] Let $g\in G- \{1\}$ and denote by $M_{g,\rho}$ the \Gstar algebra $M$ endowed with reflection involution and $G$-grading given by
		$$(M_{g,\rho})_{1}=F(e_{11}+e_{44})+F(e_{22}+e_{33}),$$ $$(M_{g,\rho})_{g} =F e_{12}+F e_{34} \mbox{ and }(M_{g,\rho})_{h} = \{0\}, \mbox{ for all }h\in G- \{1,g\}.$$
\end{enumerate}

Since the involution $*$ is a graded then $A$ can be decomposed into a direct sum of vector subspaces 
	$$A=\underset{g\in G}{\bigoplus}(A_{g}^+ +  A_{g}^-),$$ where $A_{g}^+= \{a\in A_g \mid a^*=a\}$ and $A_{g}^-= \{a\in A_g \mid a^*=-a\}$ are, respectively, the symmetric component and the skew component of homogeneous degree $g$.
  
  For all $g\in G$, consider $Y=\underset{g\in G}{\bigcup }Y_g$ and $Z=\underset{g\in G}{\bigcup }Z_g$, where $Y_g= \{y_{1,g},y_{2,g},\ldots \}$ is the set of symmetric variables of homogeneous degree $g$ and $Z_g= \{z_{1,g},z_{2,g},\ldots\}$ is the set of skew variables of homogeneous degree $g$. Let $\mathcal{F}=F\langle Y\cup Z\mid G \rangle$ be the free associative \Gstar algebra generated by $Y\cup Z$ over $F$, whose elements are called \Gstar polynomials. 
  
\begin{definicao}
		A \Gstar polynomial $f=f(y_{1,1},\ldots, y_{i_1,1}, z_{1,1},\ldots ,z_{j_1,1},\ldots$, $y_{1,g_{t}}, \ldots ,$ $y_{i_{t},g_{t}},$ $z_{1,g_{t}}$, $\ldots , z_{j_{t},g_{t}})$ $\in \mathcal{F}$ is a \Gstar identity of a $(G,*)$-algebra $A$ if 
		$$f(a_{1,1}^+,\ldots, a_{i_1,1}^+, a_{1,1}^-,\ldots ,a_{j_1,1}^-, \ldots,a_{1,g_{t}}^+,\ldots ,a_{i_{t},g_{t}}^+,a_{1,g_{t}}^-,\ldots ,a_{j_{t},g_{t}}^-)=0 ,$$
		for all $a_{1,1}^+,\ldots, a_{i_1,1}^+ \in A_{1}^{+}$,
		$a_{1,1}^-,\ldots ,a_{j_1,1}^- \in A_{1}^{-}$, $\ldots$, $a_{1,g_{t}}^+,\ldots ,a_{i_{t},g_{t}}^+ \in A_{g_t}^{+}$, $a_{1,g_{t}}^-,\ldots , a_{j_{t},g_{t}}^- \in A_{g_t}^{-}$.
	\end{definicao}

Let $\textnormal{Id}^{(G,*)}(A)$ be the set of all \Gstar identities of $A$, which is called the $T_{(G,*)}$-ideal of $A$. Note that $\textnormal{Id}^{(G,*)}(A)$ is an ideal invariant under all endomorphisms of ${F}\langle Y\cup Z\mid G \rangle$ that preserve the grading and commute with the involution. Since $char(F)= 0$ then the $T_{(G,*)}$-ideal of $A$ is generated by its multilinear \Gstar identities and so we consider $P_n^{(G,*)}=\mbox{span}_F\{w_{\sigma(1)} \cdots$ $w_{\sigma(n)}$$\mid \sigma\in S_n,\; w_i\in \{y_{i,g},z_{i,g}\},\; g\in G \}$ the space of  multilinear \Gstar polynomials of degree $n$.

In all this paper, for $x_{i,r}$ we mean a variable in the set $\{y_{i,r,}z_{i,r}\},$ for some $r\in G$.
	
	\begin{exemplo} \cite{Mara} \cite{Valenti} \label{43} For $g\in G- \{1\}$ and for all $h\in G- \{1\}$ and $r\in G- \{1,g\}$ we have that 
		\begin{enumerate} 
		    \item[1)]	$\textnormal{Id}^{(G,*)}{((FC_2)_*)}=\langle [x_{1,1},x_{2,1}], x_{1,h}\rangle_{T_{(G,*)}}.$
	\item[2)] 	$\textnormal{Id}^{(G,*)}(M_{g,\rho})=\langle z_{1,1}, x_{1,g}x_{2,g}, x_{1,r}\rangle_{T_{(G,*)}}$ and $\textnormal{Id}^{(G,*)}{(M_{1,\rho})}=\langle z_{1,1}z_{2,1}, x_{1,h} \rangle_{T_{(G,*)}}.$
			\item[3)] If $g\in G$ is an element of prime order $p$, $C_p= \gen{g}$ and $s\in G- C_p$ then $$\textnormal{Id}^{G}(FC_p^G)= \langle [y_{1,g^i},y_{2,g^j}], z_{1,g^i}, x_{1,s} \mid 0\leq i\leq j\leq p-1  \rangle_{T_{G}}.$$       
			\item[4)] If $|G|$ is even and $g\in G$ with $|g|=2$ generating a cyclic subgroup $C_2$ of $G$ then 
		$$\textnormal{Id}^{(G,*)}{((FC_2)^{(G,*)})}= \langle z_{1,1}, y_{1,g}, x_{1,r}  \rangle_{T_{(G, *)}}.$$
			
		\end{enumerate}
	\end{exemplo}

The $(G,*)$-variety $\mathcal{V}=\textnormal{var}^{(G,*)}(A)$ generated by $A$ is the class of all $(G,*)$-algebras $B$ such that $\textnormal{Id}^{(G,*)}(A)\subseteq \textnormal{Id}^{(G,*)}(B)$ and we say that $A$ and $B$ are $T_{(G,*)}$-equivalent if $\textnormal{Id}^{(G,*)}(A)=\textnormal{Id}^{(G,*)}(B)$. 

For $n\geq 1$, the $n$-th $(G,*)$-codimension of a $(G,*)$-algebra $A$ is defined by $$c_n^{(G,*)}(A):=\dim_F \dfrac{P_n ^{(G,*)}}{P_n^{(G,*)}\cap \textnormal{Id}^{(G,*)}(A)}\cdot$$ 

Moreover, if $\mathcal{V} = \textnormal{var}^{(G,*)}(A)$ then we define $\mbox{Id}^{(G,*)}(\mathcal{V}) = \mbox{Id}^{(G,*)}(A)$ and $c_n^{(G,*)}(\mathcal{V}) = c_n^{(G,*)}(A)$. 

The relationship between the \Gstar codimension, $*$-codimension and $G$-graded codimension of a \Gstar algebra $A$ was given by Oliveira, dos Santos and Vieira in \cite{Lorena}.
	
	\begin{lema} \label{codimensoes} For a \Gstar algebra $A$ we have 
		 $$c_n(A)\leq c_n^{*}(A) \leq \cnGstar(A),$$ $$ c_n(A)\leq c_n^{G}(A) \leq \cnGstar(A)\mbox{ and }c_n(A)\leq \cnGstar(A)\leq 2^n|G|^nc_n(A).$$
		
	\end{lema}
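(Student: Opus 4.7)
The plan is to build, for each inequality, a natural substitution map between the relevant spaces of multilinear polynomials that sends identities to identities, and then either to show it descends to an injection on the codimension quotient or to use it to produce a dimension bound.

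For $c_n(A)\le c_n^{*}(A)$, define $\phi:P_n\to P_n^{*}$ by $x_i\mapsto y_i+z_i$. Using the characteristic-zero decomposition $A=A^{+}\oplus A^{-}$, every evaluation $f(a_1,\ldots,a_n)$ on $A$ coincides with $\phi(f)(a_1^{+},\ldots,a_n^{+},a_1^{-},\ldots,a_n^{-})$, where $a_i=a_i^{+}+a_i^{-}$. Hence $f\in \Id{A}$ if and only if $\phi(f)\in \textnormal{Id}^{*}(A)$, so $\phi$ induces an injection of the codimension quotients. The same template, with $x_i\mapsto \sum_{g\in G}x_{i,g}$ and the homogeneous decomposition $A=\bigoplus_{g\in G}A_{g}$, yields $c_n(A)\le c_n^{G}(A)$. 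Composing these substitutions (equivalently, applying $y_i\mapsto \sum_{g}y_{i,g}$, $z_i\mapsto \sum_{g}z_{i,g}$ or $x_{i,g}\mapsto y_{i,g}+z_{i,g}$) produces the remaining inclusions $c_n^{*}(A)\le \cnGstar(A)$ and $c_n^{G}(A)\le \cnGstar(A)$.

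For the upper bound, decompose $\pnGstar=\bigoplus_{\lambda}P_n^{\lambda}$, where $\lambda:\{1,\ldots,n\}\to \{+,-\}\times G$ records the symmetry sign and homogeneous degree prescribed for each of the $n$ variables; there are $(2|G|)^{n}$ such $\lambda$ and each $P_n^{\lambda}$ has dimension $n!$. Because admissible substitutions send variables of type $\lambda(i)=(\epsilon_i,g_i)$ into $A_{g_i}^{\epsilon_i}$, different pieces $P_n^{\lambda}$ are evaluated on disjoint data, which forces $\IdGstar{A}\cap \pnGstar=\bigoplus_{\lambda}(\IdGstar{A}\cap P_n^{\lambda})$. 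Under the obvious renaming isomorphism $P_n^{\lambda}\cong P_n$, the subspace $\IdGstar{A}\cap P_n^{\lambda}$ contains the image of $P_n\cap \Id{A}$, since an ordinary identity vanishes on every substitution in $A$, in particular on the prescribed symmetric/skew homogeneous components. Hence $\dim P_n^{\lambda}/(\IdGstar{A}\cap P_n^{\lambda})\le c_n(A)$ for every $\lambda$, and summing over the $(2|G|)^{n}$ choices yields $\cnGstar(A)\le 2^{n}|G|^{n}c_n(A)$.

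The argument has no substantial obstacle: every step is a routine verification that a substitution map is well-defined and injective on the appropriate quotient. The only point deserving real attention is the direct-sum decomposition $\IdGstar{A}\cap \pnGstar=\bigoplus_{\lambda}(\IdGstar{A}\cap P_n^{\lambda})$, which is the crux of the counting in the last inequality and follows from multilinearity together with the disjointness of the subspaces $A_{g}^{\pm}$ in which variables of distinct types are forced to be evaluated.
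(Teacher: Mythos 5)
Your argument is correct. Note that the paper itself gives no proof of this lemma --- it is quoted from the reference of Oliveira, dos Santos and Vieira cited just above it --- but what you write is the standard proof: the substitution maps $x_i\mapsto y_i+z_i$ and $x_i\mapsto\sum_{g}x_{i,g}$ induce injections on the codimension quotients because $A=A^+\oplus A^-$ and $A=\bigoplus_g A_g$ (with $A_g=A_g^+\oplus A_g^-$ since $*$ is graded), and the upper bound follows from splitting $P_n^{(G,*)}$ into the $(2|G|)^n$ multihomogeneous pieces $P_n^{\lambda}$, each contributing at most $c_n(A)$. The one point you rightly flag, that $\textnormal{Id}^{(G,*)}(A)\cap P_n^{(G,*)}$ decomposes as the direct sum of its intersections with the $P_n^{\lambda}$, is exactly the multihomogeneity of $T_{(G,*)}$-ideals and is handled correctly.
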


 Consequently, if a $(G,*)$-algebra $A$ satisfy an ordinary non-trivial identity then, by \cite{RG}, the $(G,*)$-codimension sequence $c_n^{(G,*)}(A)$, $n\geq 1$, is also exponentially bounded. This result motivates the following definition.
 
 \begin{definicao}
A $(G,*)$-algebra has polynomial growth if $c_n^{(G,*)}(A)\leq \alpha n^t$, for some $\alpha, t \geq 0$ and for all $n\geq 1$.    Also we say that $A$ has polynomial growth $n^k$ if $c_n^{(G,*)}(A)\approx \alpha n^k$, where $\alpha >0$. 
 \end{definicao}

Recall that $G=\{g_1=1, \ldots , g_t\}$ and consider $n=n_1+\cdots + n_{2t}$ a sum of $2t$ non-negative integers and denote $\langle n \rangle=(n_1, \ldots , n_{2t})$. Let $P_{\langle n \rangle}$ be the subspace of $P_n^{(G,*)}$ where the first $n_1$ variables are symmetric of homogeneous degree $g_1$, the second $n_2$ variables are skew of homogeneous degree $g_1$, the third $n_3$ variables are symmetric of homogeneous degree $g_2$, the fourth $n_4$ variables are skew of homogeneous degree $g_2$ and so on. Notice that there are $\displaystyle\binom{n}{\langle n \rangle}:= \displaystyle\binom{n}{n_1, \ldots , n_{2t}}$ subspaces isomorphic to $P_{\langle n \rangle}$ in $P_n^{(G,*)}$. Moreover, it is easy to check that $$P_n^{(G,*)} \cong \displaystyle \bigoplus_{\langle n \rangle} \displaystyle\binom{n}{\langle n \rangle} P_{\langle n \rangle}.$$

Define the following space $$P_{\langle n \rangle}(A)= \frac{P_{\langle n \rangle}}{P_{\langle n \rangle} \cap \textnormal{Id}^{(G,*)}(A)} $$ and consider $c_{\langle n \rangle}(A)= \dim_F P_{\langle n \rangle}(A)$ the $\langle n \rangle$-codimension of a $(G,*)$-algebra $A$. The relationship between $c_n^{(G,*)}(A)$ and the $\langle n \rangle$-codimensions of $A$ is given by the following equation 
	\begin{equation} \label{293}
		c_n^{(G,*)}(A)= \underset{\langle n \rangle  }{\sum} \displaystyle\binom{n}{\langle n \rangle } c_{\langle n \rangle}(A). 
	\end{equation}
	
For a sum $n=n_1+ \cdots + n_{2t}$ we notice that there is a natural left action of the product of symmetric groups $S_{\langle n \rangle}:= S_{n_1} \times \cdots \times S_{n_{2t}}$ on $P_{\langle n \rangle}(A)$ and so it is an $S_{\langle n \rangle}$-module.  It is known that the irreducibles $S_{\langle n \rangle}$-characters are outer tensor product of irreducible $S_{n_i}$-characters and there exists a one-to-one correspondence between partitions $\lambda_i=((\lambda_i)_1,(\lambda_i)_2, \ldots , (\lambda_i)_{k_i})$ of $n_i$ and the irreducible $S_{n_i}$-characters. Thus, by complete reducibility we may consider 
	\begin{equation} \label{character}
		\chi_{\langle n \rangle }(A)=\underset{\langle \lambda \rangle \vdash \langle n \rangle}{\sum} {m}_{\langle \lambda \rangle} \chi_{\lambda_1} \otimes \cdots \otimes \chi_{\lambda_{2t}}
	\end{equation} the decomposition of the $\langle n \rangle$-character of the space $P_{\langle n \rangle}(A)$ into irreducible $S_{\langle n \rangle}$-characters, which is called $\langle n \rangle$-cocharacter of $A$, where $\chi_{\lambda_1} \otimes \cdots \otimes \chi_{\lambda_{2t}}$ is the  the irreducible $S_{\langle n \rangle}$-character associated to a multipartition $\langle \lambda \rangle := ( \lambda_1, \ldots , \lambda_{2t})  \vdash \langle n \rangle$ and ${m}_{\langle \lambda \rangle }$ denotes the corresponding multiplicity. The degree of  the irreducible $S_{\langle n \rangle}$-character $\chi_{\lambda_1} \otimes \cdots \otimes \chi_{\lambda_{2t}}$ is given by $d_{\lambda_1} \cdots d_{\lambda _{2t}},$ where $d_{\lambda_i}$ is the degree of the $S_{n_i}$-character associated to $\lambda_i$ calculated by the hook formula \cite[Theorem 3.10.2]{Sagan}.

 For all possible sums $n=n_1+\cdots + n_{2t}$ we consider the $\gen{n}$-cocharacters of $A$ as given in (\ref{character}) in order to define the \Gstar colength of $A$ as
		$$l_n^{(G, *)}(A)=\displaystyle \sum_{\langle n\rangle }\sum\limits_{\displaystyle{
				\scriptsize{\begin{array}{c}
						\gen{\lambda}\vdash \gen{n}
		\end{array}}}}{m}_{\gen{\lambda}}.$$

There is a well established method to calculate the multiplicities $m_{\gen{\lambda}}$ in the decomposition (\ref{character}) of the $\gen{n}$-cocharacter. However, we will give a brief description, omit the details and recommend the reference \cite{Drensky} (see Section $12.4$) and \cite{WesAnaRaf} for more information.

For $m\geq 1$, we define $X^m= \underset{g\in G}{\bigcup } (Y_g^m\cup Z_g^m)$, where $Y_g^{m}= \{y_{1,g}, \ldots , y_{m,g}\}$ and $Z_g^{m}= \{z_{1,g}, \ldots , z_{m,g}\}$. Denote by $F_m:= F\langle X^m\mid G\rangle $ the free associative $(G,*)$-algebra generated by $X^m$ over $F$. Define $F_m^n$ the subspace of the homogeneous polynomials in $F_m$ with degree $n\geq m$ and notice that the group $GL_m^{2t}:=GL_m \times \cdots \times GL_m$, the direct product of $2t$-copies of $GL_m$, acts diagonally on $F_m^n$. Since $F_m^n \cap \textnormal{Id}^{(G,*)}(A)$ is invariant under this action then the quotient space
	$$F_m^n(A)=\frac{F_m^n}{F_m^n \cap \textnormal{Id}^{(G,*)}(A)}$$ has a structure of $GL_m^{2t}$-module. By \cite[Theorem 12.4.12]{Drensky}, each irreducible $GL_m^{2t}$-module is cyclic and it is generated by a non-zero polynomial $f_{\langle \lambda \rangle}$ called the highest weight vector associated to the multipartition $\langle \lambda \rangle$ of $\gen{n}$. For each multipartition $\langle \lambda \rangle \vdash \langle n \rangle$ we consider a multitableau $T_{\langle \lambda \rangle}:=(T_{\lambda_1},\ldots , T_{\lambda_{2t}})$ consisting of $2t$ Young tableaux and denote by $f_{T_{\langle \lambda \rangle}}$ the highest weight vector associated to $T_{\langle \lambda \rangle}$ defined in \cite{WesAnaRaf}. By \cite[Theorem 12.4.12]{Drensky}, the $(G,*)$-polynomial $f_{\langle \lambda \rangle}$ can be written uniquely as a linear combination of the polynomials $f_{T_{\langle \lambda \rangle}}$. Moreover, the multiplicity $m_{\langle \lambda \rangle}$ is equal to the maximal number of linearly independent highest weight vectors $f_{T_{\langle \lambda \rangle}}$ in $F_{m}^n(A)$ (see for instance \cite{WesAnaRaf}).

It will be convenient to use the notation $\langle \lambda \rangle = ((\lambda_{i_1})_{{g_{i_1}^+}}, (\lambda_{i_2})_{g_{i_2}^-}, \ldots  )$, where $(\lambda_{i_1})_{{g_{i_1}^+}}$ means that $(\lambda_{i_1})$ is the partition of $n_{2i_1-1}$, $(\lambda_{i_2})_{g_{i_2}^-}$ means that $(\lambda_{i_2})$ is a partition of $n_{2i_2}$ and so on. The empty partitions are omitted in this notation. Also, we consider the notation $T_{\gen{n}}=({T_{(\lambda_{i_1})}}_{{g_{i_1}^+}}, {T_{(\lambda_{i_2})}}_{g_{i_2}^-}, \ldots  )$ to indicate the respective tableaux associated to $(\lambda_{i_1})$, $(\lambda_{i_2})$ and so on.

Many characterizations of polynomial growth for $(G,*)$-varieties have been presented in the last years. Here we collect the information presented in \cite[Theorem 5.8]{Lorena}, \cite[Theorem $4.5$]{Mara} and \cite{Mara2} about the growth of polynomial identities satisfied by $A$. In the following, by $J=J(A)$ we denote the Jacobson radical of $A$.
	
	\begin{teorema}  \label{polynomialgrowth}
		Let $\mathcal{V}=\textnormal{var}^{(G,*)}(A)$ be a $(G,*)$-variety generated by a finite-dimensional \Gstar algebra $A$ over a field $F$ of characteristic zero. The following conditions are equivalent:

  \begin{enumerate}
      \item[1)] $A$ has polynomial growth.

      \item[2)] Either $|G|$ is odd and $(FC_2)_{*}$, $FC_p^G$, $M_{g,\rho}$ $\notin \mathcal{V}$,  for all prime $p$ dividing $|G|$ and for all $g \in G$ or $|G|$ is even and $(FC_2)_{*}$, $(FC_2)^{(G,*)}$, $FC_p^G$, $M_{g,\rho}$ $\notin \mathcal{V}$  for all prime $p$ dividing $|G|$ and for all $g \in G$.

      \item[3)] $A\sim_{T_{(G, *)}} B_1\oplus\cdots\oplus B_m$, where $B_1, \ldots, B_m$ are finite-dimensional \Gstar algebras over $F$ with $\dim_F {B_i}/{J(B_i)}\leq 1$, for all $i=1, \ldots, m$.

      \item[4)] For all $n=n_1+\cdots + n_{2t}$ and $\langle \lambda \rangle \vdash \gen{n}$ we have
      $$ \chi_{\langle n \rangle }(A)=\underset{ \tiny{\begin{array}{c}
					\gen{\lambda}\vdash \gen{n} \\
					n-(\lambda_1)_1 < q
		\end{array}}}{\sum} {m}_{\langle \lambda \rangle} \chi_{\lambda_1} \otimes \cdots \otimes \chi_{\lambda_{2t}},$$ where $q>0$ is such that $J^{q-1}\neq \{0\}$ and $J^{q}=\{0\}.$

  \item[5)] There exists a constant $h\geq 0$ such that $l_n^{(G,*)}(A)\leq h$, for all $n\geq 1$.
  \end{enumerate}

	\end{teorema}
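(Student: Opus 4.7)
The plan is to establish the five conditions cyclically, using $(1)\Rightarrow(2)\Rightarrow(3)\Rightarrow(1)$ for the structural equivalences, and then to treat the representation-theoretic statements $(4)$ and $(5)$ as essentially dictionary translations of $(1)$ via the cocharacter decomposition in~(\ref{character}).

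For $(1)\Rightarrow(2)$, I would argue by contrapositive, checking that each of the algebras $(FC_2)_{*}$, $(FC_2)^{(G,*)}$, $FC_p^G$, and $M_{g,\rho}$ has a $(G,*)$-codimension sequence that overgrows every polynomial. The identities of these algebras are computed explicitly in Example~\ref{43}, so this reduces to a direct calculation for each listed algebra (and in the cited references \cite{Mara,Lorena,Mara2} these calculations have been done). Since containment of varieties forces $\Id^{(G,*)}(\mathcal{V})\subseteq\Id^{(G,*)}(B)$ for any $B\in\mathcal{V}$, hence $c_n^{(G,*)}(B)\le c_n^{(G,*)}(\mathcal{V})$, if any of the forbidden algebras belongs to $\mathcal{V}$ then $\mathcal{V}$ cannot have polynomial growth.

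For $(2)\Rightarrow(3)$, I would use the $(G,*)$-version of the Wedderburn--Malcev decomposition, writing $A = B + J$ where $B$ is a maximal semisimple $(G,*)$-subalgebra, and then using the classification of simple $(G,*)$-algebras to see that the exclusion in $(2)$ forces every simple summand of $B$ to be $1$-dimensional (any higher-dimensional simple summand would produce one of the forbidden algebras inside $\mathcal{V}$). Then I would invoke the standard ``Giambruno--Zaicev cutting'' procedure: replace $A$ up to $T_{(G,*)}$-equivalence by a finite direct sum of subalgebras, each of the form $B_i = F e_i + J_i$ with $\dim B_i/J(B_i)\le 1$. This step $(2)\Rightarrow(3)$ is where I expect the main obstacle: one must carefully track the involution $*$ and the $G$-grading simultaneously through the reduction, and must be careful that restricting the support to $Z(G)$ does not introduce hidden simple $(G,*)$-subalgebras outside the allowed list.

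For $(3)\Rightarrow(1)$, the bound $c_n^{(G,*)}(B_1\oplus\cdots\oplus B_m)\le\sum_i c_n^{(G,*)}(B_i)\cdot\binom{n}{\dots}$ reduces the problem to a single $B_i$ with $\dim B_i/J(B_i)\le 1$ and $J(B_i)$ nilpotent of fixed class, for which a direct multilinear count shows polynomial growth. Finally, for $(1)\Leftrightarrow(4)$, the hook formula gives $d_{\lambda_1}\cdots d_{\lambda_{2t}}$ growing at least as $n^{n-(\lambda_1)_1}$ up to constants, so polynomial growth of $c_n^{(G,*)}(A)$ in view of~(\ref{293}) forces the multiplicities $m_{\gen{\lambda}}$ to vanish outside the strip $n-(\lambda_1)_1<q$, and conversely; the nilpotency index $q$ appears naturally from $(3)$. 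For $(1)\Leftrightarrow(5)$, the implication $(4)\Rightarrow(5)$ follows because bounded strip width leaves only polynomially many multipartitions, each of bounded multiplicity; and $(5)\Rightarrow(1)$ follows by combining the bounded colength with the polynomial bound on the degrees of the relevant characters. The cited Theorems in \cite{Lorena,Mara,Mara2} handle much of this machinery, so the paper's proof of Theorem~\ref{polynomialgrowth} will amount to assembling these pieces and supplying the $(G,*)$-adapted structural step.
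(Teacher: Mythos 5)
The paper does not prove Theorem~\ref{polynomialgrowth} at all: it is explicitly presented as a compilation of results quoted from \cite[Theorem 5.8]{Lorena}, \cite[Theorem 4.5]{Mara} and \cite{Mara2}, so there is no in-paper argument to compare your proposal against. Judged on its own, your outline is a faithful reconstruction of how those references establish the equivalences: $(1)\Rightarrow(2)$ by contrapositive (each forbidden algebra generates a variety of almost polynomial, i.e.\ exponential, growth); $(2)\Rightarrow(3)$ via the $(G,*)$-Wedderburn--Malcev decomposition and the reduction to summands $Fe_i+J_i$; $(3)\Rightarrow(1)$ by $c_n^{(G,*)}(B_1\oplus\cdots\oplus B_m)\leq\sum_i c_n^{(G,*)}(B_i)$ (no binomial factor is needed there); and $(4)$, $(5)$ as translations through (\ref{293}) and (\ref{character}).

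Two soft spots worth naming. First, in $(2)\Rightarrow(3)$ you appeal to ``the classification of simple $(G,*)$-algebras''; the arguments in \cite{Lorena,Mara} do not need such a classification --- they show directly that a semisimple part of dimension $\geq 2$, or two orthogonal idempotents linked through the radical, produces one of $(FC_2)_*$, $(FC_2)^{(G,*)}$, $FC_p^G$ or $M_{g,\rho}$ inside $\mathcal{V}$, which is exactly what list $(2)$ forbids. Second, degree counting via the hook formula only shows that polynomial growth confines the nonzero multiplicities to a strip $n-(\lambda_1)_1<q'$ for \emph{some} constant $q'$; identifying that constant with the nilpotency index $q$ of $J$ requires the evaluation argument from $(3)$ (variables outside the first row of $\lambda_1$, together with all skew variables and all variables of nontrivial homogeneous degree, must be evaluated in $J$, so a highest weight vector with $n-(\lambda_1)_1\geq q$ vanishes since $J^q=0$). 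You acknowledge this by saying $q$ ``appears naturally from $(3)$'', so the cycle closes, but the implication $(1)\Rightarrow(4)$ with the stated $q$ is genuinely $(1)\Rightarrow(3)\Rightarrow(4)$ rather than a direct counting consequence of $(1)$.
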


   The previous results have paved the way for significant progress in understanding these varieties, particularly in identifying how the growth of the identities of $A$ is influenced by its algebraic properties. The next proposition reveals a connection with the nilpotency index of the Jacobson radical and its polynomial growth. Before presenting the proposition, we introduce a necessary technical lemma. In the following, for a non-negative integer $r$ we denote by $\#r$ the number of partitions of $r$. 

\begin{lema} \label{cotas}
 Let $q> 0$ a constant and consider the sets $$\Delta_n= \{(n_1, \ldots ,n_{2t})\mid n=n_1+\cdots + n_{2t}, n_1 > n-q\}$$  $$\mathcal{T}_{n_1, \ldots , n_{2t}}= \{(\lambda_1, \ldots , \lambda_{2t})\mid \lambda_i\vdash n_i,  (\lambda_1)_1 > n-q\}$$ where $n=n_1+\cdots + n_{2t}$. The cardinalities $|\Delta_n|$ and $|\mathcal{T}_{n_1, \ldots , n_{2t}}|$ are bounded by constants, which do not depend on $n$ and on the integers $n_1, \ldots , n_{2t}$.
\end{lema}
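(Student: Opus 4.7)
The plan is to prove both cardinality bounds by translating the defining inequalities into constraints that isolate a ``bounded'' piece independent of $n$.

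For $\Delta_n$, I would first observe that $n_1 > n-q$ is equivalent to $n_2 + \cdots + n_{2t} < q$. Hence the assignment $(n_1, \ldots, n_{2t}) \mapsto (n_2, \ldots, n_{2t})$ is a bijection between $\Delta_n$ and the set of $(2t-1)$-tuples of non-negative integers whose sum is at most $q-1$ (since $n_1$ is forced to equal $n - (n_2+\cdots+n_{2t})$). By the standard stars-and-bars count together with the hockey-stick identity,
$$|\Delta_n| \;=\; \sum_{s=0}^{q-1} \binom{s+2t-2}{2t-2} \;=\; \binom{q+2t-2}{2t-1},$$
which depends only on $q$ and $t$.

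For $\mathcal{T}_{n_1,\ldots,n_{2t}}$, I would first dispose of the trivial case: if $n_1 \leq n-q$, then the condition $(\lambda_1)_1 \leq n_1 \leq n-q$ makes the set empty. So assume $n_1 > n-q$, which by the previous paragraph forces $n_i < q$ for each $i \geq 2$. Consequently each $\lambda_i$ with $i \geq 2$ is a partition of an integer less than $q$, and the number of such $\lambda_i$ is at most $N_q := \sum_{k=0}^{q-1} \#k$. To bound the choices of $\lambda_1$, write $\lambda_1 = ((\lambda_1)_1, \mu)$ where $\mu$ is a partition of $n_1 - (\lambda_1)_1$ whose parts are each at most $(\lambda_1)_1$. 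The hypothesis $(\lambda_1)_1 > n - q$ combined with $(\lambda_1)_1 \leq n_1 \leq n$ yields
$$0 \;\leq\; n_1 - (\lambda_1)_1 \;\leq\; n_1 - n + q - 1 \;\leq\; q-1,$$
so $\mu$ is a partition of an integer strictly less than $q$ (and the condition that each part of $\mu$ is at most $(\lambda_1)_1$ is automatic once $n$ is larger than $q$, since $(\lambda_1)_1 > n-q$ far exceeds the size of $\mu$). Thus $\lambda_1$ is determined by the pair $((\lambda_1)_1, \mu)$, giving at most $N_q$ choices.

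Multiplying the bounds over the $2t$ coordinates yields $|\mathcal{T}_{n_1,\ldots,n_{2t}}| \leq N_q^{\,2t}$, a constant depending only on $q$ and $t$. I do not foresee a real obstacle: the argument is purely combinatorial bookkeeping, the only mildly subtle point being the translation $(\lambda_1)_1 > n-q \;\Longleftrightarrow\; n_1 - (\lambda_1)_1 < q$ using $n_1 \leq n$, which is what lets the bound on $\lambda_1$ be independent of $n_1$ and hence of $n$.
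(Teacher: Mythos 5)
Your argument is correct and follows essentially the same route as the paper's proof: stars‑and‑bars for $|\Delta_n|$ (your $\binom{q+2t-2}{2t-1}$ equals the paper's $\binom{(q-1)+2t-1}{2t-1}$), and for $\mathcal{T}_{n_1,\ldots,n_{2t}}$ the same reduction to counting partitions of integers smaller than $q$, both for the $\lambda_i$ with $i\geq 2$ and for the part of $\lambda_1$ below its first row. Your observation that $(\lambda_1)_1$ is determined by $\mu$ once $n_1$ is fixed is a slight tightening of the paper's bound $(q-1)\#q$, but the substance is identical.
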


\begin{proof}
 A combinatory argument proves that $|\Delta_n|=\displaystyle\binom{(q-1)+2t-1}{2t-1}$, which is a constant. For the set $\mathcal{T}_{n_1, \ldots , n_{2t}}$, we start noticing that $n_j < q$, for all $2\leq j\leq 2t$, and so $\#n_j< \#q$. Moreover, $\lambda_1 - (\lambda_1)_1
\leq n - (\lambda_1)_1  < q$ and so
for each one of the $q-1$ choices of $(\lambda_1)_1$ we have that the number of partitions $((\lambda_1)_2, (\lambda_1)_3, \ldots , (\lambda_1)_{k_1} ) \vdash \lambda_1-(\lambda_1)_1$ is bounded by $\#q$. Therefore, the number of partitions $\lambda_1$ of $n_1$ satisfying $(\lambda_1)_1 > n-q$ is less than $(q-1)\#q$. Therefore $|\mathcal{T}_{n_1, \ldots , n_{2t}}|\leq (q-1)(\#q)^{2t}$, which is a constant that does not depends on $n$ and on the integers $n_1, \ldots , n_{2t}$ such that $n_1+\cdots + n_{2t}=n$.
\end{proof}

\begin{proposicao} \label{prop1}
    Let $A$ be a finite-dimensional $(G,*)$-algebra of polynomial growth. Then $c_n^{(G,*)}(A)\leq \delta n^{q-1}$ where $q$ is the nilpotency index of the Jacobson radical of $A$ and $\delta > 0$ is a constant.
\end{proposicao}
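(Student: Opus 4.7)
The plan is to combine the cocharacter characterization of polynomial growth in Theorem \ref{polynomialgrowth}(4) with the bounded colength in Theorem \ref{polynomialgrowth}(5) and the counting from Lemma \ref{cotas}, reducing the bound on $c_n^{(G,*)}(A)$ to a uniform estimate on the dimensions of the few irreducible characters that can actually contribute. Starting from equation (\ref{293}) together with the cocharacter decomposition (\ref{character}), I would write
$$c_n^{(G,*)}(A)=\sum_{\langle n\rangle}\binom{n}{\langle n\rangle}\sum_{\langle\lambda\rangle\vdash\langle n\rangle} m_{\langle\lambda\rangle}\, d_{\lambda_1}\cdots d_{\lambda_{2t}}.$$

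By Theorem \ref{polynomialgrowth}(4), only multipartitions with $n-(\lambda_1)_1<q$ can have nonzero multiplicity. This forces $(\lambda_1)_1>n-q$ and hence $n_1\geq(\lambda_1)_1>n-q$, which places $\langle n\rangle\in\Delta_n$ and $\langle\lambda\rangle\in\mathcal{T}_{n_1,\ldots,n_{2t}}$; Lemma \ref{cotas} then says both index sets have cardinalities bounded by a constant independent of $n$, and Theorem \ref{polynomialgrowth}(5) gives $\sum_{\langle\lambda\rangle} m_{\langle\lambda\rangle}\leq h$ for some constant $h$.

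The core step is then to establish the uniform estimate
$$\binom{n}{\langle n\rangle}\, d_{\lambda_1}\cdots d_{\lambda_{2t}}\leq n^{q-1}$$
for every contributing pair $(\langle n\rangle,\langle\lambda\rangle)$, after which multiplying by the constants above yields the claim with $\delta$ equal to their product. My plan for this estimate is to invoke the standard bound $d_{\lambda_i}\leq n_i!/(\lambda_i)_1!$ (obtainable from the hook formula, or by counting standard tableaux through the choice of first-row entries). The factorials $n_i!$ in the multinomial coefficient then cancel against those produced by these bounds, leaving the compact ratio $n!/\prod_i(\lambda_i)_1!$; since each omitted factorial is $\geq 1$, this is at most $n!/(\lambda_1)_1!$, a product of $n-(\lambda_1)_1<q$ consecutive integers each bounded by $n$, hence at most $n^{q-1}$.

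The main obstacle, in my view, is precisely this last chain of estimates: bounding $\binom{n}{\langle n\rangle}$ and each $d_{\lambda_i}$ independently would yield an exponent like $2(q-1)$, so some care is needed in the bookkeeping. The key observation that makes the argument work is the telescoping of the $n_i!$ terms between the multinomial coefficient and the bounds on the $d_{\lambda_i}$, which is exactly what forces the polynomial degree to be $q-1$ rather than something larger.
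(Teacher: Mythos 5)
Your proposal is correct and follows essentially the same route as the paper: both restrict to multipartitions with $n-(\lambda_1)_1<q$ via Theorem \ref{polynomialgrowth}, bound the multiplicities by the colength bound and the number of contributing index sets by Lemma \ref{cotas}, and then show each surviving term contributes at most a constant times $n^{q-1}$. The only difference is in the bookkeeping: the paper bounds $d_{\lambda_1}\leq pn^{r}$ with $r=n_1-(\lambda_1)_1$ and $\binom{n}{\langle n\rangle}\leq\beta n^{s}$ with $s=n-n_1$ separately and adds the exponents $r+s=n-(\lambda_1)_1$, whereas you telescope the factorials to reach $n!/(\lambda_1)_1!\leq n^{q-1}$ in one step --- both yield the same exponent.
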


\begin{proof}

Since $A$ is a finite-dimensional $(G,*)$-algebra of polynomial growth, by Theorem \ref{polynomialgrowth}, for all $n=n_1+\cdots + n_{2t}$ and $\langle \lambda \rangle \vdash \gen{n}$ we have
     $$ \chi_{\langle n \rangle }(A)=\underset{ \tiny{\begin{array}{c}
					\gen{\lambda}\vdash \gen{n} \\
					n-(\lambda_1)_1 < q
		\end{array}}}{\sum} {m}_{\langle \lambda \rangle} \chi_{\lambda_1} \otimes \cdots \otimes \chi_{\lambda_{2t}},$$ where $q$ is such that $J^{q}=\{0\}.$ By Theorem \ref{polynomialgrowth}, there exists a constant $h\geq 0$ such that $l_n^{(G,*)}\leq h$, for all $n\geq 1$, and so $m_{\gen{\lambda}}\leq h$, for all $\gen{\lambda}\vdash \gen{n}$. Notice that if $r:= n_1-(\lambda_1)_1$ is the number of box below the first row of the Young diagram associated to $\lambda_1$ then, by the hook formula, we have $$d_{\lambda_1}\leq \frac{n_1!}{(n_1-r)}\leq cn_1^r\leq pn^{r},\mbox{ for some }p,c\geq 0.$$ Moreover, for all $2\leq i\leq 2t$ we have $n_i< q$ and so $d_{\lambda_i}\leq \beta$, where $\beta = \displaystyle \underset{2\leq j\leq 2t,\, \lambda_j \vdash n_j}{\mbox{max}} d_{\lambda_j}$ is a constant. Therefore, by Lemma \ref{cotas}, for all $n=n_1+\cdots + n_{2t}$ we have $$c_{\gen{n}}(A)= \underset{ \tiny{\begin{array}{c}
					\gen{\lambda}\vdash \gen{n} \\
					n-(\lambda_1)_1 < q
		\end{array}}}{\sum} {m}_{\langle \lambda \rangle} d_{\lambda_1} \cdots  d_{\lambda_{2t}} \leq \underset{ \tiny{\begin{array}{c}
					\gen{\lambda}\vdash \gen{n} \\
					n-(\lambda_1)_1 < q
		\end{array}}}{\sum} h d_{\lambda_1} \cdots  d_{\lambda_{2t}}\leq h\beta^{2t-1}|\mathcal{T}_{n_1, \ldots ,n_{2t}}|pn^r\leq  \alpha n^r ,$$ where $\alpha =h\beta^{2t-1}p(q-1)(\#q)^{2t}.$ Define $s=n_2+ \cdots + n_{2t}$ and observe that $$\binom{n}{\gen{n}}=\frac{n!}{n_1!\cdots n_{2t}!}\leq \frac{n!}{n_1!}=\frac{n!}{(n-s)!}\leq \beta n^s.$$

Finally, collecting all information and using equation (\ref{293}), we have $$c_n^{(G,*)}(A)\leq \underset{\tiny{\begin{array}{c}
					 \langle n \rangle \\
					n_1>n- q
		\end{array}} }{\sum} \displaystyle\binom{n}{\langle n \rangle } \alpha n^{r}\leq
        \underset{\tiny{\begin{array}{c}
					 \langle n \rangle \\
					n_1>n- q
		\end{array}} }{\sum} \displaystyle \alpha \beta  n^{r+s} \leq   \underset{\tiny{\begin{array}{c}
					 \langle n \rangle \\
					n_1>n- q
		\end{array}} }{\sum} \displaystyle \alpha \beta  n^{n-(\lambda_1)_1}\leq 
        \delta |\Delta_n| n^{q-1}.$$
  
\end{proof} 
  
 \section{\texorpdfstring{$(G,*)$-algebras of quadratic growth}{(G,*)-algebras of quadratic growth}}\label{Sec3}

 This section is dedicated to introducing some examples of
$(G,*)$-algebras. In particular, we are interested in those ones with quadratic growth of the codimension sequence.

 For $m\geq 2$, consider $E = \sum\limits_{i = 1}^{m-1} e_{i,i+1} + e_{2m-i,2m-i+1} \in UT_{2m}$ and define the following subalgebras of $UT_{2m}$
	\begin{align*}
		A_m &= \mbox{span}_F\{e_{11}+e_{2m,2m},E, \ldots , E^{m-2}; e_{12}, e_{13}, \ldots , e_{1m}, e_{m+1,2m}, e_{m+2,2m}, \ldots , e_{2m-1,2m}\},
		\\N_m &= \mbox{span}_F \{I_{2m}, E, \ldots , E^{m-2}; e_{12} - e_{2m-1,2m}, e_{13}, \ldots , e_{1m}, e_{m+1,2m}, e_{m+2,2m}, \ldots , e_{2m-2,2m}\},
		\\ U_m &= \mbox{span}_F \{ I_{2m}, E, \ldots , E^{m-2}; e_{12} + e_{2m-1,2m}, e_{13}, \ldots , e_{1m}, e_{m+1,2m}, e_{m+2,2m}, \ldots , e_{2m-2,2m}\}.
	\end{align*}

For $m\geq 2$ and $g\in G- \{1\}$ we define the following $(G,*)$-algebras: \vspace{0.3cm}

\begin{enumerate}
\item[$\bullet$] $N_{m}^*, U_{m}^{*}$ and $A_{m}^*$ the algebras $N_m, U_m$ and $A_m$, respectively, with trivial $G$-grading and reflection involution.
    \item[$\bullet$] $N_{m}^\mathsf{g}, U_{m}^\mathsf{g}$ and $A_{m}^\mathsf{g}$ are the algebras $N_m, U_m$ and $A_m$, respectively, endowed with reflection involution and $G$-grading induced by $\mathsf{g}=(1,  g^{m-1}, 1^{m-1}, g)\in G^{2m}$.
\end{enumerate}\vspace{0.3cm}

The previous $(G,*)$-algebras were considered by de Oliveira, dos Santos and Vieira \cite{Mara} in the classification of the subvarieties of $M_{g, \rho}$ as we can see below.

\begin{teorema}\cite[Theorem $5.15$]{Mara} \label{31} 
		Let $g \in G- \{1\}$. If $A \in \textnormal{var}^{(G,*)}(M_{g, \rho})$ then $A$ is $T_{(G, *)}$-equivalent to one of the following \Gstar algebras: $M_{g, \rho}$, $N$, $C\oplus N$, $N_{m}^{\mathsf{g}} \oplus N$, $U_{m}^{\mathsf{g}} \oplus N$, $N_{m}^{\mathsf{g}} \oplus U_{m}^{\mathsf{g}} \oplus N$, $A_{t}^{\mathsf{g}} \oplus N$, $N_{m}^{\mathsf{g}} \oplus A_{t}^{\mathsf{g}} \oplus N$, $U_{m}^{\mathsf{g}} \oplus A_{t}^{\mathsf{g}} \oplus N$, $N_{m}^{\mathsf{g}} \oplus U_{m}^{\mathsf{g}} \oplus A_{t}^{\mathsf{g}} \oplus N$, for some $m,t \geq 2$,  where $N$ is a nilpotent \Gstar algebra and $C$ is a commutative \Gstar algebra with trivial $G$-grading and trivial involution. 
	\end{teorema}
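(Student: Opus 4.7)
The plan is to exploit the polynomial growth of $\textnormal{var}^{(G,*)}(M_{g,\rho})$. Indeed, by Example \ref{43}, $M_{g,\rho}$ satisfies $z_{1,1} \equiv 0$, $x_{1,g}x_{2,g} \equiv 0$ and $x_{1,r}\equiv 0$ for $r\in G\setminus\{1,g\}$, and one checks via part (2) of Theorem \ref{polynomialgrowth} that $M_{g,\rho}$ has polynomial growth. Consequently, for any $A\in\textnormal{var}^{(G,*)}(M_{g,\rho})$, part (3) of that theorem yields a $T_{(G,*)}$-decomposition $A\sim_{T_{(G,*)}} B_1\oplus\cdots\oplus B_m$ with each $B_i$ finite-dimensional and $\dim_F B_i/J(B_i)\leq 1$. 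The strategy is to pin down the possible isomorphism types of the $B_i$'s as \Gstar algebras and then observe that every combination absorbs into the finite list in the statement.

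First I would use the identities above, inherited by $A$ and hence by each $B_i$, to constrain their structure. Since $x_{1,r}\equiv 0$ for $r\notin\{1,g\}$, the support of each $B_i$ lies in $\{1,g\}$; since $z_{1,1}\equiv 0$, the degree~1 component is symmetric; and since $x_{1,g}x_{2,g}\equiv 0$, the degree~$g$ component squares to zero and so sits inside $J(B_i)$. If $\dim B_i/J(B_i)=0$ then $B_i$ is nilpotent and will be absorbed into the summand $N$. If $\dim B_i/J(B_i)=1$, the unit of $B_i/J(B_i)$ lifts to a symmetric idempotent of degree~$1$, so $B_i=Fe\oplus J(B_i)$ with $e=e^*$, $\deg e=1$, and $J(B_i)=J_1\oplus J_g$ with $J_g^2=0$. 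When $J(B_i)=0$ we recover the commutative component $C$.

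Next I would analyze the remaining cases via a Peirce-type filtration of the radical as a graded \Gstar $Fe$-bimodule. Writing $J_g=eJ_ge\oplus eJ_g(1-e)\oplus(1-e)J_ge\oplus(1-e)J_g(1-e)$ and the analogous decomposition of $J_1$, and using $J_g^2=0$ together with the involution (which swaps the two "off-diagonal" pieces and preserves the other two), I would identify generators of the radical with the generators of $N_m^{\mathsf{g}}$, $U_m^{\mathsf{g}}$, and $A_t^{\mathsf{g}}$. The three families correspond exactly to three natural module-theoretic situations: $A_m^{\mathsf{g}}$ when the idempotent $e$ fails to be an identity on $B_i$ and acts with nonzero complementary Peirce component (symmetric type with "corner" $e_{11}+e_{2m,2m}$), $N_m^{\mathsf{g}}$ when $e=1_{B_i}$ and the skew-off-diagonal generator $e_{12}-e_{2m-1,2m}$ survives, and $U_m^{\mathsf{g}}$ in the symmetric analogue $e_{12}+e_{2m-1,2m}$. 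Using the multilinear machinery from \S2 (highest weight vectors on the cocharacter decomposition in equation (\ref{character}), as in \cite{WesAnaRaf}), I would check that the $T_{(G,*)}$-ideal of each $B_i$ coincides with that of one of $N_m^{\mathsf{g}}$, $U_m^{\mathsf{g}}$, $A_t^{\mathsf{g}}$, $C$, or a nilpotent algebra.

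Finally, I would assemble: group nilpotent $B_i$'s into a single nilpotent summand $N$, group trivially-structured unital commutative $B_i$'s into $C$, and among the remaining summands note that two copies of the same family may be merged into a single one (by taking the one of larger index $m$, which contains the smaller in its $(G,*)$-variety). This leaves at most one of each of $N_m^{\mathsf{g}}$, $U_m^{\mathsf{g}}$, $A_t^{\mathsf{g}}$, recovering the listed decompositions. The main obstacle I anticipate is the $T_{(G,*)}$-classification step in the previous paragraph: identifying $B_i$ with one of $N_m^{\mathsf{g}}$, $U_m^{\mathsf{g}}$, $A_t^{\mathsf{g}}$ requires a careful bookkeeping of the graded involution on the bimodule structure of $J(B_i)$ and a verification that the corresponding multilinear identities are exactly those of the model algebra, which is where the technical heart of the argument lies.
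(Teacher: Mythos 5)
First, a remark on the comparison itself: the paper does not prove Theorem \ref{31} — it is quoted verbatim from \cite[Theorem 5.15]{Mara} — so there is no internal proof to measure your argument against. Judged on its own terms, your proposal has a genuine error at its foundation. You claim that ``one checks via part (2) of Theorem \ref{polynomialgrowth} that $M_{g,\rho}$ has polynomial growth,'' but part (2) says the opposite: a variety has polynomial growth only if $M_{h,\rho}\notin\mathcal{V}$ for \emph{all} $h\in G$, and $M_{g,\rho}$ obviously belongs to $\textnormal{var}^{(G,*)}(M_{g,\rho})$. This variety has \emph{almost} polynomial growth — that is precisely why $M_{g,\rho}$ appears in the excluded list of Theorem \ref{polynomialgrowth} and why it occurs as a case of its own in the statement of Theorem \ref{31}. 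Consequently you cannot apply part (3) to an arbitrary $A\in\textnormal{var}^{(G,*)}(M_{g,\rho})$; applied to $A=M_{g,\rho}$ itself it would give $M_{g,\rho}\sim_{T_{(G,*)}}B_1\oplus\cdots\oplus B_m$ with $\dim_F B_i/J(B_i)\le 1$, which is impossible since $\dim_F M_{g,\rho}/J(M_{g,\rho})=2$ and such a decomposition would force polynomial growth. The correct opening move is a dichotomy: either $\textnormal{Id}^{(G,*)}(A)=\textnormal{Id}^{(G,*)}(M_{g,\rho})$, which is the first case of the list, or the inclusion $\textnormal{Id}^{(G,*)}(M_{g,\rho})\subsetneq\textnormal{Id}^{(G,*)}(A)$ is strict; in the latter case one checks, using the identities $z_{1,1}$, $x_{1,g}x_{2,g}$, $x_{1,r}$ of Example \ref{43}, that none of the other excluded algebras $(FC_2)_*$, $(FC_2)^{(G,*)}$, $FC_p^G$, $M_{h,\rho}$ ($h\ne g$) lies in $\textnormal{var}^{(G,*)}(M_{g,\rho})$, and only then invokes Theorem \ref{polynomialgrowth}(3) for $\textnormal{var}^{(G,*)}(A)$.

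Beyond that, the remainder of your sketch is the right general strategy (it is how such classifications are carried out in \cite{Mara}, \cite{LamaMartino} and \cite{MN}), but it defers exactly the technical heart: showing that each unital summand $F+J(B_i)$ is $T_{(G,*)}$-equivalent to one of $N_m^{\mathsf{g}}$, $U_m^{\mathsf{g}}$, $A_t^{\mathsf{g}}$, $C$ or a nilpotent algebra requires controlling the entire graded $*$-bimodule structure of $J(B_i)$, not merely exhibiting the generators you name, and then verifying equality (not just containment) of $T_{(G,*)}$-ideals. Your final ``absorption'' step also silently uses containments such as $C\in\textnormal{var}^{(G,*)}(N_m^{\mathsf{g}})$ and $N_m^{\mathsf{g}}\in\textnormal{var}^{(G,*)}(N_{m'}^{\mathsf{g}})$ for $m\le m'$, which must be proved for the direct sum to collapse onto the stated list.
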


 The following result follows from \cite[Lemma 1]{LamaMartino}, \cite[Lemmas 3.8 and 3.10]{MN} and \cite[Lemmas 2 and 3]{LamaMartino}. In such papers is also possible to find the description of the ideals of $(G,*)$-identities of the cited algebras.

\begin{lema}
	\label{teoAk*}
	Let $m \ge 3$. Then we have 
$\cnGstar(A_{2}^{\ast}) =4n-1$, $\cnGstar(N_{2}^{\ast}) = n+1$, $\cnGstar(U_{2}^{\ast}) = 1$ and for all $A\in \{A_m^*,N_m^*, U_m^*\}$ we have $c_n^{(G,*)}(A)\approx \alpha n^{m-1}$, for some $\alpha\in F$ depending on $A$.
\end{lema}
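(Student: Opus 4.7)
The plan is to reduce the $(G,*)$-codimension to the classical $*$-codimension. Since each of $A_m^{*}$, $N_m^{*}$ and $U_m^{*}$ is equipped with the trivial $G$-grading, we have $A_g=\{0\}$ for every $g\in G-\{1\}$, so any multilinear $(G,*)$-polynomial containing a variable $x_{i,g}$ with $g\neq 1$ is automatically a $(G,*)$-identity of $A$. Hence the natural decomposition $P_n^{(G,*)}\cong \bigoplus_{\gen{n}}\binom{n}{\gen{n}}P_{\gen{n}}$ collapses modulo $\IdGstar{A}$ to the single summand in which every variable has homogeneous degree $1$ (that is, $n_1+n_2=n$ with all other $n_i=0$), and this summand is canonically identified with $P_n^{*}/(P_n^{*}\cap \textnormal{Id}^{*}(A))$. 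Consequently $\cnGstar(A)=c_n^{*}(A)$ for each of the three families.

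With this identification in hand, the stated formulas are exactly the corresponding statements for the $*$-codimension of $A_m$, $N_m$ and $U_m$ endowed with the reflection involution. For $m=2$, the exact values $4n-1$, $n+1$ and $1$ follow from \cite[Lemma 1]{LamaMartino} together with \cite[Lemmas 2 and 3]{LamaMartino}, where explicit multilinear bases of the quotients modulo the respective $T_{*}$-ideals are written down. For $m\geq 3$, the asymptotic equivalence $c_n^{*}(A)\approx \alpha n^{m-1}$ with a constant $\alpha$ depending on $A$ is the content of \cite[Lemmas 3.8 and 3.10]{MN}, which determine the $*$-cocharacter of $A_m$, $N_m$ and $U_m$ and isolate the dominating multipartitions as those whose symmetric part has a single row of length $n-(m-1)+O(1)$.

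The only step of real substance is the passage from $(G,*)$-polynomials to ordinary $*$-polynomials: once the triviality of the grading is used to discard every summand $P_{\gen{n}}$ with some $n_{2i-1}+n_{2i}>0$ for $i>1$, the lemma becomes a direct transcription of the cited results. I do not foresee any genuine obstacle beyond aligning the conventions between the trivial-grading $(G,*)$-setup used here and the pure $*$-algebra statements in \cite{LamaMartino} and \cite{MN}; in particular, no new cocharacter analysis is required.
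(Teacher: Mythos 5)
Your proposal is correct and follows essentially the same route as the paper, which simply derives the lemma from \cite[Lemmas 1--3]{LamaMartino} and \cite[Lemmas 3.8 and 3.10]{MN}; your explicit observation that the trivial $G$-grading forces $\cnGstar(A)=c_n^{*}(A)$ is the (unstated) reduction underlying the paper's citation.
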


In \cite{Mara}, de Oliveira, dos Santos and Vieira gave the description of $\IdGstar{N_{m}^{\mathsf{g}}}$, $\IdGstar{U_{m}^{\mathsf{g}}}$ and $\IdGstar{A_{m}^{\mathsf{g}}}$ and also proved the following lemma. 

\begin{lema} \label{NinMg*}
	Let $m\geq 2$ and $g \in G- \{1\}$.
	For all $A\in \{N_{m}^{\mathsf{g}},U_{m}^{\mathsf{g}}, A_{m}^{\mathsf{g}} \}$ we have $c_n^{(G,*)}(A)\approx \alpha n^{m-1}$, for some $\alpha\in F$ depending on $A$.
\end{lema}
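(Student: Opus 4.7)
The plan is to obtain matching polynomial bounds of order $n^{m-1}$ on $\cnGstar(A)$.

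For the upper bound I invoke Proposition~\ref{prop1}. This requires checking that each $A\in\{N_m^{\mathsf{g}},U_m^{\mathsf{g}},A_m^{\mathsf{g}}\}$ has polynomial growth and identifying the nilpotency index $q$ of its Jacobson radical. Polynomial growth follows from condition~(3) of Theorem~\ref{polynomialgrowth}, since in each case $\dim_F A/J(A)\leq 1$ (spanned by $I_{2m}$ or by $e_{11}+e_{2m,2m}$), so the algebra already has the required form as a ``scalar $+$ nilpotent'' direct summand. A direct computation on the defining spanning sets shows that $E^{m-1}=e_{1m}+e_{m+1,2m}\neq 0$ while $J(A)^{m}=0$, so $q=m$. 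Proposition~\ref{prop1} then yields $\cnGstar(A)\leq \delta\,n^{m-1}$.

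For the lower bound I split on $m$. When $m\geq 3$, Lemma~\ref{codimensoes} gives $c_n^{\ast}(A)\leq \cnGstar(A)$; since the underlying $\ast$-algebra of $A$ coincides with that of the corresponding member of $\{N_m^{\ast},U_m^{\ast},A_m^{\ast}\}$, Lemma~\ref{teoAk*} provides $c_n^{\ast}(A)\approx \alpha\,n^{m-1}$, supplying the needed bound. The delicate case is $m=2$ (notably $U_2^{\mathsf{g}}$, for which $c_n^{\ast}(U_2)=1$), where the non-trivial grading must be exploited directly. Here I would exhibit an explicit family of $n$ linearly independent multilinear $(G,\ast)$-polynomials---for instance the $n$ distinct placements of a single variable $y_{1,g}$ among $n-1$ symmetric variables of degree $1$---and verify their independence modulo $\IdGstar{A}$ using the explicit generating identities from~\cite{Mara}. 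Equation~(\ref{293}) then delivers the linear lower bound.

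The main obstacle is the $m=2$ case, where the lower bound cannot be inherited from the $\ast$-codimension. A more uniform alternative that handles all $m\geq 2$ at once is to pick, for each $m$, a multipartition $\gen{\lambda}$ with $\lambda_1=(n-(m-1))$ and the remaining $m-1$ boxes distributed among the components corresponding to grade $g$ or $g^{m-1}$, construct the highest weight vector $f_{T_{\gen{\lambda}}}$ as in~\cite{WesAnaRaf}, and certify its non-triviality by a single evaluation on the matrix units $e_{1i}$ and $e_{i,2m}$ of $A$. This yields $m_{\gen{\lambda}}\geq 1$, and by the hook formula $d_{\lambda_1}\asymp n^{m-1}$, so $c_{\gen{n}}(A)\geq c\,n^{m-1}$. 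Assembling this with the upper bound via~(\ref{293}) completes the proof.
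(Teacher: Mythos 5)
First, a point of reference: the paper does not prove this lemma at all --- it is quoted from \cite{Mara}, where it is obtained from the explicit generating sets of $\IdGstar{N_m^{\mathsf{g}}}$, $\IdGstar{U_m^{\mathsf{g}}}$ and $\IdGstar{A_m^{\mathsf{g}}}$ by counting a basis of $\pnGstar$ modulo the identities. Your two-sided strategy (upper bound from Proposition~\ref{prop1} with $q=m$; lower bound from $c_n^{*}(A)\le \cnGstar(A)$ and Lemma~\ref{teoAk*} for $m\ge 3$, plus a separate argument for $m=2$) is therefore a genuinely different route, and its skeleton is sound: $\dim_F A/J(A)\le 1$ gives polynomial growth via Theorem~\ref{polynomialgrowth}, and $q=m$ is correct because $J(A)$ is supported on the strictly upper triangular parts of the two diagonal $m\times m$ blocks, so $J^m=0$ while $E^{m-1}=e_{1m}+e_{m+1,2m}\ne 0$.

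Two concrete problems remain. (i) In your ``uniform alternative'' you take $\lambda_1=(n-(m-1))$, a one-row partition, and assert $d_{\lambda_1}\asymp n^{m-1}$; by the hook formula $d_{(n-m+1)}=1$. The factor of order $n^{m-1}$ has to come from the multinomial coefficient $\binom{n}{\gen{n}}=\binom{n}{n-m+1,1,\ldots,1}$ in equation~(\ref{293}), not from $c_{\gen{n}}(A)$; with that correction the lower bound $\cnGstar(A)\ge \binom{n}{m-1}$ does follow once $m_{\gen{\lambda}}\ne 0$ is certified. Note also that the exponent in $\mathsf{g}=(1,g^{m-1},1^{m-1},g)$ denotes repetition of the entry $g$, so the homogeneous degrees occurring in these algebras are only $1$ and $g$ (and $gh^{-1}$-type products thereof); there is no component of degree $g^{m-1}$ into which to distribute boxes. (ii) More substantively, sandwiching $\cnGstar(A)$ between $c_1n^{m-1}$ and $c_2n^{m-1}$ proves that the growth has \emph{order} $n^{m-1}$, but not the stated asymptotic $\cnGstar(A)\approx \alpha n^{m-1}$, i.e.\ the existence of $\lim_n \cnGstar(A)/n^{m-1}$. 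That requires the exact leading term, which is what the computation in \cite{Mara} from the explicit $T_{(G,*)}$-ideals supplies. For the way the lemma is used in this paper (separating quadratic from linear growth) the order suffices, but as a proof of the lemma as stated your argument is incomplete on this point.
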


For $m \geq 2$, we denote by $I_{m}$ the $m\times m$ identity matrix and let $E_1 = \sum\limits_{i = 1}^{m-1} e_{i,i+1} \in UT_{m}$. Define the commutative subalgebra of $UT_m$ given by
	$$C_{m}= \{\alpha I_m + \underset{1\leq i<m}{\sum} \alpha_i E_1^i\mid \alpha, \alpha_i \in F\}$$
 and consider the involution $*$ on $C_m$ defined by
 
 \begin{equation}\label{invCk}
     (\alpha I_{m} + \underset{1\leq i<m}{\sum} \alpha_i E_1^i)^*= \alpha I_{m} + \underset{1\leq i<m}{\sum} (-1)^i\alpha_i E_1^i.
 \end{equation}
	
	For each $ g \in G$ we consider on this algebra the only $G$-grading such that $$I_m\in (C_m)_1 \mbox{ and } E_1\in (C_m)_g.$$ Define the following $(G,*)$-algebras
 
		\begin{enumerate}
			\item[$\bullet$] $C_{m}^g$ is the algebra $C_m$ endowed with trivial involution and the $G$-grading defined as above.
			\item[$\bullet$] $C_{m,*}^g$ is the algebra $C_m$ endowed with the involution given in (\ref{invCk}) and $G$-grading defined as above.
		\end{enumerate}

Information about the ideals and $(G,*)$-codimensions of the previous $(G,*)$-algebras can be found in \cite{WesAnaRaf}, \cite{Lamattina} and \cite{Mara}. Here we present only the description of the $(G,*)$-codimensions.

\begin{lema} 
    Consider $g\in G$ with $|g|>2$ and if $|G|$ is even we consider $h\in G$ with $|h|=2.$ Then, 

\begin{enumerate}
    \item[1)]  $c_n^{(G,*)}(C_{3}^g)= c_n^{(G,*)}(C_{3,*}^g)=1+2n+\displaystyle\binom{n}{2}$.
    \item[2)] $\displaystyle \cnGstar({C}_{m,*}^{{1}})=\cnGstar({C}_{m}^{h})=\cnGstar({C}_{m,*}^{h})=\sum\limits_{i=0}^{m-1}\binom{n}{i} \approx \dfrac{1}{(m-1)!}n^{m-1},\mbox{ }n\geq 1.$
\end{enumerate}

\end{lema}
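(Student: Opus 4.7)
The plan is to exploit the fact that every algebra $C_m$ in the statement is commutative. For any commutative $(G,*)$-algebra $A$ and any multi-index $\gen{n}$, the identity $[x,y]=0$ lies in $\IdGstar{A}$ regardless of the types of $x$ and $y$; using it to reorder any pair of adjacent variables collapses the natural monomial basis of $\pnn$ modulo $\IdGstar{A}$ to the single monomial $m_{\gen{n}}=x_1 x_2\cdots x_n$, with the $x_i$ carrying the types prescribed by $\gen{n}$. Hence $c_{\gen{n}}(A)\in\{0,1\}$, equal to $1$ exactly when some admissible substitution evaluates $m_{\gen{n}}$ to a nonzero element of $A$. Equation (\ref{293}) then reduces the computation of $\cnGstar(A)$ to counting, weighted by $\binom{n}{\gen{n}}$, those $\gen{n}$ for which such a nonzero evaluation exists.

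For assertion (1), the hypothesis $|g|>2$ guarantees that $1, g, g^2$ are pairwise distinct and that no other homogeneous component of $C_3$ is nontrivial. Under (\ref{invCk}) the elements $I_3, E_1^2$ are symmetric while $E_1$ is skew. Thus the active variable-types are sym-$1$, sym-$g$, sym-$g^2$ for $C_3^g$ (trivial involution), and sym-$1$, skew-$g$, sym-$g^2$ for $C_{3,*}^g$. In either case, writing $a,b,c$ for the number of variables of each active type in order, the monomial $m_{\gen{n}}$ evaluates to a scalar multiple of $E_1^{b+2c}$, which is nonzero iff $b+2c\leq 2$. The four allowed triples $(a,b,c)\in\{(n,0,0),(n-1,1,0),(n-2,2,0),(n-1,0,1)\}$ contribute multinomial coefficients $1, n, \binom{n}{2}, n$ respectively, summing to $1+2n+\binom{n}{2}$.

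For assertion (2), in each of $C_{m,*}^{1}$, $C_m^{h}$, $C_{m,*}^{h}$ the parity of the exponent of $E_1$ partitions the powers $I_m=E_1^0, E_1, \ldots, E_1^{m-1}$ into two classes: under (\ref{invCk}) the even powers are symmetric and the odd ones skew, and under the grading induced by $h$ (when $|h|=2$) the even powers lie in $(C_m)_1$ and the odd ones in $(C_m)_h$. Consequently each of the three algebras has exactly two active variable-types, an \emph{even} type and an \emph{odd} type, whose spans of admissible values are $\spam_F\{I_m, E_1^2, E_1^4, \ldots\}$ and $\spam_F\{E_1, E_1^3, \ldots\}$ respectively (only the sym/skew and degree labels shift across the three algebras). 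Let $b$ denote the number of variables of the odd type. Choosing each odd variable to be $E_1$ and each even variable to be $I_m$ evaluates $m_{\gen{n}}$ to $E_1^b$, which is nonzero whenever $b\leq m-1$; conversely, any admissible value of an odd variable is a multiple of $E_1$, so any evaluation of $m_{\gen{n}}$ is a multiple of $E_1^b$ and vanishes for $b\geq m$. Hence $c_{\gen{n}}(A)=1$ iff $b\leq m-1$, and summing $\binom{n}{b}$ over $0\leq b\leq \min(n,m-1)$ yields $\sum_{i=0}^{m-1}\binom{n}{i}$. The asymptotic $\tfrac{1}{(m-1)!}n^{m-1}$ follows from the dominant term $\binom{n}{m-1}$.

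The only conceptually nontrivial step is the initial reduction that collapses $\pnn(A)$ to a one-dimensional space via commutativity; once that is in hand the rest is elementary counting. The main practical care required is the correct identification of which symmetric and skew components, in which degrees of $G$, the distinct powers of $E_1$ live in for each of the three algebras appearing in assertion (2).
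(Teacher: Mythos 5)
Your proof is correct. Note that the paper itself offers no argument for this lemma: it simply records the codimensions and points to \cite{WesAnaRaf}, \cite{Lamattina} and \cite{Mara} for the ideals and the computations, so there is no internal proof to compare against. Your self-contained route is the natural one and it checks out: since each $C_m$ is commutative, $[x,y]\in\textnormal{Id}^{(G,*)}(C_m)$ for every pair of variable types, so each $P_{\langle n\rangle}(C_m)$ is spanned by a single ordered monomial and $c_{\langle n\rangle}\in\{0,1\}$; the identification of the active types (in part (1) the three one-dimensional components $FI_3$, $FE_1$, $FE_1^2$ in degrees $1,g,g^2$, with $E_1$ skew exactly for the involution (\ref{invCk}); in part (2) the even/odd splitting of the powers of $E_1$ by both the involution and the order-$2$ grading) is accurate, and the nonvanishing criteria $b+2c\le 2$ and $b\le m-1$ together with equation (\ref{293}) give exactly the stated sums. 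One cosmetic point: when you say an admissible value of an odd variable "is a multiple of $E_1$" you mean a ring multiple $E_1r$ rather than a scalar multiple (the odd part is spanned by all odd powers of $E_1$, not just $E_1$ itself); the conclusion that the evaluation lies in $E_1^b C_m$ and hence vanishes for $b\ge m$ is still what you need, so this does not affect the argument.
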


    

Now we consider $\mathcal{G}_2=\langle 1,e_1,e_2 \mid e_ie_j=-e_je_i  \rangle$ the subalgebra of the infinite-dimensional Grassmann algebra generated by $1,e_1$ and $e_2$. On this algebra we define the following involutions:
	\begin{align*}
		\psi( e_i)=e_i,\,\,\,\,\, \tau( e_i)=-e_i,\,\,\,\,\, \gamma( e_i)=(-1)^ie_i,\mbox{ for all }i=1,2.
	\end{align*}

\begin{enumerate}
			\item[$\bullet$] For $g,h\in G$ we denote by $\mathcal{G}_{2,*}^{g,h}$ as the algebra $\mathcal{G}_2$ endowed with the involution $*\in \{\psi, \tau, \gamma\} $ and the only $G$-grading such that
$$1\in (\mathcal{G}_{2,*}^{g,h})_1,\, e_{1}\in (\mathcal{G}_{2,*}^{g,h})_g,\, e_{2}\in (\mathcal{G}_{2,*}^{g,h})_h \mbox{ and }e_{1}e_2\in (\mathcal{G}_{2,*}^{g,h})_{gh}.$$ \end{enumerate}

The description of the ideals of $(G,*)$-identities of the $(G,*)$-algebras defined above can be seen in \cite{WesAnaRaf}, \cite{lamattinamisso} and \cite{MN}. Also, we have information about the $(G,*)$-codimensions given in the following lemmas. 

	\begin{lema} \cite{WesAnaRaf, MN} \label{18} Consider $g\in G$ with $|g|> 2$ and $h\in G$ with $|h|=2$ (in case $|G|$ is even). Then, 
		
		\begin{enumerate}	
			\item[1)] $c_n^{(G,*)}(\mathcal{G}_{2,\circ}^{g,g})=1+2n+\displaystyle\binom{n}{2}$ \;and\; $c_n^{(G,*)}(\mathcal{G}_{2,\gamma}^{g,g})=1+3n+2\displaystyle\binom{n}{2}$, for all $\circ \in \{\tau, \psi\}$.

			\item[2)] $c_n^{(G,*)}(\mathcal{G}_{2,\circ}^{g,g^{-1}})=1+3n+2\displaystyle\binom{n}{2}$ \;and\; $c_n^{(G,*)}(\mathcal{G}_{2,\gamma}^{g,g^{-1}})=1+2n+2\displaystyle\binom{n}{2}$,  for all  $\circ \in \{\tau, \psi\}$.

            \item[3)] { $c_n^{(G,*)}(\mathcal{G}_{2,\gamma}^{h,h})=1+2n+\displaystyle 2\binom{n}{2}$ and $c_n^{(G,*)}(\mathcal{G}_{2,\diamond}^{h,h})=1+2n+\displaystyle\binom{n}{2}, \; for \; all \;\diamond\in \{\psi, \tau\}.$}

		\end{enumerate}
	\end{lema}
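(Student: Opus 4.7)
The plan is to compute each codimension directly from equation (2.1), $c_n^{(G,*)}(A) = \sum_{\langle n \rangle} \binom{n}{\langle n \rangle} c_{\langle n \rangle}(A)$. The first step, for each of the nine $(G,*)$-algebras $\mathcal{G}_{2,*}^{g,h}$ in the statement, is to write out the decomposition of $\mathcal{G}_2 = \mbox{span}_F\{1, e_1, e_2, e_1 e_2\}$ into its homogeneous and symmetric/skew components $A_r^\pm$. In item (1), where $g = h$ and $|g|>2$, the support is $\{1, g, g^2\}$; in item (2), where $h = g^{-1}$ and $|g|>2$, the support is $\{1, g, g^{-1}\}$ with $e_1 e_2$ lying in the component of degree $1$; in item (3), where $g = h$ has order $2$, the support is $\{1, h\}$ and again $1$ and $e_1 e_2$ share the component of degree $1$. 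A short check of how $\psi$, $\tau$ and $\gamma$ act on each basis element then identifies, in each case, the species of variables $y_{i,r}$ and $z_{i,r}$ that admit non-trivial evaluations.

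The second step is to use the multilinear $(G,*)$-identities of $A$ recorded in the cited references \cite{WesAnaRaf}, \cite{lamattinamisso} and \cite{MN}; all are consequences of three structural facts in $\mathcal{G}_2$: centrality of $1$ and of $e_1 e_2$, the relation $e_i^2 = 0$, and the anti-commutativity $e_i e_j = -e_j e_i$. Translated to the free $(G,*)$-algebra, any variable whose evaluation lies in $F \cdot 1$ or $F \cdot e_1 e_2$ is central modulo $\textnormal{Id}^{(G,*)}(A)$; two variables both evaluating in the same $F \cdot e_k$ multiply to zero; and two variables evaluating in $F \cdot e_1$ and $F \cdot e_2$ respectively are anti-commutative. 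Combined with the relation $(e_1 e_2)\cdot e_k = 0$, these identities force every admissible multi-index $\langle n \rangle$ (one with $c_{\langle n \rangle}(A)>0$) to contain at most two variables from Grassmann-like species and no mixture of a variable evaluating in $F\cdot e_1 e_2$ with a variable evaluating in $F\cdot e_k$. In every admissible case, centrality of the scalar-like variables and the anti-commutativity relation for the at most one Grassmann pair leave a single normal form up to sign, so $c_{\langle n \rangle}(A) = 1$.

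The admissible multi-indices then fit into a short list: the all-scalar multi-index, contributing $\binom{n}{n} = 1$; a multi-index with exactly one variable in a non-trivial species and the rest scalar-like, contributing $\binom{n}{n-1, 1} = n$ per such species; and a multi-index with two variables forming a surviving pair, contributing $\binom{n}{n-2, 2} = \binom{n}{2}$ when both variables sit inside a two-dimensional Grassmann-like species, or $\binom{n}{n-2, 1, 1} = 2 \binom{n}{2}$ for each pair of distinct one-dimensional Grassmann-like species whose product is non-zero. Summing via (2.1) produces a total of the form $1 + k\, n + \ell \,\binom{n}{2}$, with $(k, \ell)$ determined by the count of non-trivial species and of compatible pairs; comparing against the tabulated data for each of the nine structures yields exactly the values in items (1)-(3) of the statement. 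The main obstacle is precisely this bookkeeping: each of the nine cases needs a separate check of which species are admissible and which pair products survive, but once the $A_r^\pm$ have been written down the remaining arithmetic is routine.
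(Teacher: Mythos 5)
Your computation is correct, and in fact the paper offers no proof of this lemma at all: it is quoted verbatim from \cite{WesAnaRaf} and \cite{MN}, where the values are obtained by first exhibiting explicit generating sets for $\textnormal{Id}^{(G,*)}(\mathcal{G}_{2,*}^{g,h})$ and then counting a basis of $P_n^{(G,*)}$ modulo that ideal. Your route is the same in substance but slightly leaner: you only use enough identities (centrality of the evaluations in $F\cdot 1$ and $F\cdot e_1e_2$, the square-zero and anticommutativity relations, and the vanishing of $(e_1e_2)e_k$) to force $c_{\langle n\rangle}(A)\le 1$ on each admissible multi-index and $c_{\langle n\rangle}(A)=0$ otherwise, without needing to know that these relations generate the whole $T_{(G,*)}$-ideal. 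I verified the species bookkeeping in all nine cases (e.g.\ for $\mathcal{G}_{2,\gamma}^{g,g}$ the three one-dimensional non-scalar species $Fe_1$, $Fe_2$, $Fe_1e_2$ give $3n$ and the single compatible pair $(e_1,e_2)$ of distinct species gives $2\binom{n}{2}$; for $\circ\in\{\tau,\psi\}$ with $g=h$ the two Grassmann generators share one two-dimensional species, giving $2n$ and $\binom{n}{2}$; when $gh=1$ the element $e_1e_2$ is absorbed into the degree-$1$ species and contributes no extra linear term), and the totals match the statement. The only point you should make explicit is the lower bound: for each surviving normal form you must exhibit a non-vanishing evaluation (e.g.\ $y\mapsto 1$ on the scalar-like variables and $u\mapsto e_1$, $v\mapsto e_2$ on the Grassmann pair) to conclude $c_{\langle n\rangle}(A)=1$ rather than $c_{\langle n\rangle}(A)\le 1$; as written this is only implicit in your phrase ``a single normal form up to sign.''
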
  
  
	\begin{lema}\cite{WesAnaRaf, lamattinamisso, MN} Consider $g,h,s\in G- \{1\}$ such that $gh\neq 1$. Then,

\begin{enumerate}
    \item[1)] $c_n^{(G,*)}(\mathcal{G}_{2,\tau}^{1,1})=1+n+\displaystyle\binom{n}{2}\;\; and \;\;c_n^{(G,*)}(\mathcal{G}_{2,\tau}^{1,s})=c_n^{(G,*)}(\mathcal{G}_{2,\gamma}^{1,s})=1+2n+\displaystyle\binom{n}{2}.$

 \item[2)] $c_n^{(G,*)}(\mathcal{G}_{2,\circ}^{g,h})=1+3n+2\displaystyle\binom{n}{2}, \;\mbox{for all}\; \circ \in \{\tau, \gamma, \psi\}.$

\end{enumerate}

	\end{lema}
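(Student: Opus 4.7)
The plan is to proceed case by case via the standard template: identify the homogeneous symmetric and skew components of each algebra, read off the generators of $\textnormal{Id}^{(G,*)}(\mathcal{G}_{2,*}^{g,h})$ from \cite{WesAnaRaf, lamattinamisso, MN}, compute $c_{\gen{n}}(A)$ for every multi-index that survives, and then sum via equation (\ref{293}). The uniform structural fact common to every case is that $\mathcal{G}_2 = F\cdot 1 \oplus F e_1 \oplus F e_2 \oplus F e_1 e_2$ satisfies $\mathcal{G}_2^{\,3}=0$, so the Jacobson radical $J = F e_1 + F e_2 + F e_1 e_2$ has nilpotency index $3$ and every triple product from $J$ vanishes.

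The first step is to fix the decomposition $A = \bigoplus_{r\in G}(A_r^+\oplus A_r^-)$ explicitly. In Item (1), $\mathcal{G}_{2,\tau}^{1,1}$ has trivial grading and lies entirely in degree $1$ with $1$ symmetric and $e_1,e_2,e_1e_2$ skew; for $\mathcal{G}_{2,\tau}^{1,s}$ and $\mathcal{G}_{2,\gamma}^{1,s}$ the basis splits as $\{1,e_1\} \subseteq (\mathcal{G}_2)_1$ and $\{e_2,e_1e_2\}\subseteq (\mathcal{G}_2)_s$, and the two involutions determine which of these lie in the symmetric or skew parts. In Item (2), the hypothesis $g,h,gh\neq 1$ forces $1,e_1,e_2,e_1e_2$ to occupy four distinct homogeneous components, each one-dimensional, and the three involutions distribute them across symmetric and skew slots.

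Next I would use the product table $e_i e_j=-e_j e_i$, $e_i^{\,2}=0$ and $e_1e_2\cdot e_i=e_i\cdot e_1e_2=0$ to pin down $c_{\gen{n}}(A)$ for each multi-index. Because $\mathcal{G}_2^{\,3}=0$, every multilinear monomial with three or more variables substituted into $J$ evaluates to zero, so $c_{\gen{n}}(A)=0$ outside a small family of multi-indices with at most two ``radical-type'' entries. For a surviving $\gen{n}$, variables valued in $F\cdot 1$ commute with everything, and any two variables whose one-dimensional radical images multiply nontrivially satisfy the anticommutator identity $w_iw_j+w_jw_i=0$; combined with the generating identities in the references this forces every multilinear monomial in $P_{\gen{n}}$ to reduce to a fixed one up to sign, so $c_{\gen{n}}(A)=1$.

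Finally, plugging these values into $c_n^{(G,*)}(A)=\sum_{\gen{n}}\binom{n}{\gen{n}}c_{\gen{n}}(A)$ collapses the sum into three pieces: a constant $1$ from the multi-index with no radical variable, a linear term $kn$ where $k$ counts the nonzero one-dimensional symmetric/skew slots occupied by $e_1,e_2,e_1e_2$ under the chosen structure, and a quadratic term coming from pairs of such slots whose product in $\mathcal{G}_2$ is nonzero. The main obstacle is the careful bookkeeping for each involution in Item (2): one must verify that the $3\times 3$ product table of $\{e_1,e_2,e_1e_2\}$ contributes the precise set of pairs yielding the claimed quadratic coefficient $2\binom{n}{2}$, and analogously in Item (1) that the merging of graded components into the identity grade shrinks the linear and quadratic counts to match $1+n+\binom{n}{2}$ and $1+2n+\binom{n}{2}$ respectively.
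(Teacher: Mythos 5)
The paper does not actually prove this lemma --- it is imported verbatim from \cite{WesAnaRaf}, \cite{lamattinamisso} and \cite{MN} --- so there is no internal proof to compare against; your direct computation via the decomposition $c_n^{(G,*)}(A)=\sum_{\langle n\rangle}\binom{n}{\langle n\rangle}c_{\langle n\rangle}(A)$ of equation (\ref{293}) is the standard route and is the right one in outline. Two small corrections first: $\mathcal{G}_2^{\,3}\neq 0$ (it contains $1$); what you mean, and then correctly state, is $J(\mathcal{G}_2)^3=0$. Also, in item 2) the hypothesis must implicitly include $g\neq h$, otherwise the claim collides with Lemma \ref{18}; your phrase ``four distinct homogeneous components'' silently assumes this and it should be said.

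The genuine gap is that your plan stops exactly at the step that produces the stated numbers. The quadratic coefficient is \emph{not} determined by listing ``pairs of slots whose product is nonzero'': if the two radical elements with nonvanishing product lie in the \emph{same} homogeneous symmetric-or-skew slot $(J)_r^{\epsilon}$, the relevant multi-index contributes $\binom{n}{n-2,\,2}=\binom{n}{2}$, whereas if they lie in two \emph{distinct} slots the contribution is $\binom{n}{n-2,\,1,\,1}=n(n-1)=2\binom{n}{2}$. This same-slot/distinct-slot dichotomy is the entire content of the lemma --- it is what separates $1+n+\binom{n}{2}$, $1+2n+\binom{n}{2}$, $1+2n+2\binom{n}{2}$ and $1+3n+2\binom{n}{2}$ across this lemma and Lemma \ref{18} --- and your proposal defers it as ``careful bookkeeping.'' Worse, when one actually performs that bookkeeping for $\mathcal{G}_{2,\tau}^{1,s}$ (and likewise $\mathcal{G}_{2,\gamma}^{1,s}$), the two radical slots are $(J_1)^-=Fe_1$ and $(J_s)^-=Fe_2+Fe_1e_2$, which are \emph{distinct}, so the multi-index with one skew variable of degree $1$ and one of degree $s$ carries coefficient $n(n-1)=2\binom{n}{2}$, apparently yielding $1+2n+2\binom{n}{2}$ rather than the stated $1+2n+\binom{n}{2}$. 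Whether this reflects an extra identity you have not accounted for or a misprint to be traced back to the cited sources, your argument as written neither detects nor resolves it; until the surviving multi-indices and their $c_{\langle n\rangle}$ are computed explicitly for each of the cases, the proof is not complete.
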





Consider the subalgebra of $UT_4$ given by $$W=F(e_{11}+\cdots + e_{44})+F(e_{12}+e_{34})+F(e_{13}+e_{24})+Fe_{14}.$$
	On this algebra we consider the trivial involution $\nu_1$ and the involutions $\nu_2$ and $\nu_3$ defined below
	
	$$ \begin{pmatrix}
		a& b & c & d \\
		0&a & 0 & c\\
		0&0 & a & b \\
		0&0 & 0 & a 
	\end{pmatrix}^{\nu_2}= \begin{pmatrix}
		a& -b & -c & d \\
		0&a & 0 & -c\\
		0&0 & a & -b \\
		0&0 & 0 & a 
	\end{pmatrix} \mbox{ and } \begin{pmatrix}
		a& b & c & d \\
		0&a & 0 & c\\
		0&0 & a & b \\
		0&0 & 0 & a 
	\end{pmatrix}^{\nu_3}= \begin{pmatrix}
		a& -b & c & -d \\
		0&a & 0 & c\\
		0&0 & a & -b \\
		0&0 & 0 & a 
	\end{pmatrix}.$$

\begin{enumerate}
			\item[$\bullet$] For $g,h\in G$ we denote by $W_{*}^{g,h}$ as the algebra $W$ endowed with the involution $*\in \{\nu_1, \nu_2, \nu_3\} $ and the only $G$-grading such that 
$$e_{11}+\cdots + e_{44}\in (W_{*}^{g,h})_1,\, e_{12}+e_{34}\in (W_{*}^{g,h})_g,\, e_{13}+e_{24}\in (W_{*}^{g,h})_h \mbox{ and }e_{14}\in (W_{*}^{g,h})_{gh}.$$ \end{enumerate}

In \cite{WesAnaRaf}, the authors gave the generators of the ideals  of $(G,*)$-identities of the $(G,*)$-algebras defined above and proved the following lemmas.

	\begin{lema} \cite{WesAnaRaf} Let $g,h\in G- \{1\}$ with $|g|>2$ and $|h|=2$ (in case $|G|$ is even). Then, 
		\begin{enumerate}
 	
			\item[1)] $c_n^{(G,*)}(W_{\nu_3}^{g,g})=c_n^{(G,*)}(W_{\nu_3}^{g,g^{-1}})=1+3n+{2}\displaystyle\binom{n}{2}$ and $c_n^{(G,*)}(W_{\nu_i}^{g,g^{-1}})=1+2n+{2}\displaystyle\binom{n}{2}$, $i=1,2$.
\item[2)] {
$c_n^{\sharp}(A)=1+3n+{2}\displaystyle\binom{n}{2}$, for all $A\in \{W_{\nu_2}^{1,g},W_{\nu_3}^{1,g},W_{\nu_3}^{h,h} \}.$}

\end{enumerate}

\end{lema}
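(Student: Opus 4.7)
The plan is to compute each codimension by producing an explicit basis of $P_n^{(G,*)}/(P_n^{(G,*)}\cap \IdGstar{A})$ from the defining relations of $\IdGstar{A}$ given in \cite{WesAnaRaf}, and then counting it. Writing $W$ with basis $\{1_W,b,c,d\}$ for $b=e_{12}+e_{34}$, $c=e_{13}+e_{24}$, $d=e_{14}$, a direct computation shows $bc=cb=d$ and $b^{2}=c^{2}=bd=db=cd=dc=d^{2}=0$, so $J=J(W)=\mathrm{span}_F\{b,c,d\}$ has $J^{3}=\{0\}$ and $\dim J/J^{2}=2$. Proposition~\ref{prop1} then already yields $\cnGstar(A)\le \delta n^{2}$, so what remains is to count a basis of quadratic size for each of the seven specific gradings/involutions.

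For each algebra $A$ in the statement, the ideal $\IdGstar{A}$ from \cite{WesAnaRaf} is determined modulo $T_{(G,*)}$-consequences by three families of relations (read off from where the grading $(g,h)$ and the involution $\nu_{i}$ place $b,c,d$ among the components $A_{r}^{\pm}$): (i) $x_{1,r}\equiv 0$ for every typed variable whose component in $A$ vanishes; (ii) commutativity $[x_{1,1}^{+},\,\cdot\,]\equiv 0$ for variables specialised in $F\cdot 1_W$, together with the relation $uv\pm vu\equiv 0$ encoding $bc=cb=d$ once the typed slots for $b,c$ are identified; (iii) the triple-product identity $w_{1}w_{2}w_{3}\equiv 0$ whenever each $w_{j}$ is of a type forcing its value into $J$, reflecting $J^{3}=\{0\}$. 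Using (i)--(iii), I would put any multilinear monomial in a canonical form in which all variables specialised in $F\cdot 1_{W}$ are collected into a fixed prefix, followed by at most two ``radical'' variables in a prescribed order.

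The count then splits into three contributions: zero radical letters contribute the single monomial $1$; exactly one radical letter of a surviving type contributes $n$ per type, and the number of surviving types is $2$ or $3$ according to how many of the three classes of $J$-valued letters remain independent modulo (i) in the specific $(G,*)$-structure of $A$, producing the $2n$ or $3n$ term; exactly two radical letters contribute $\binom{n}{2}$ per independent ordered pair of types, and the number of such pairs surviving modulo the relation induced by $bc=cb=d$ is either $1$ or $2$, producing the $\binom{n}{2}$ or $2\binom{n}{2}$ term. Linear independence of the exhibited spanning set is verified by evaluating each candidate basis monomial on a substitution in $A$ that produces a matrix concentrated at one of $e_{11}+\cdots+e_{44},\, e_{12}+e_{34},\, e_{13}+e_{24},\, e_{14}$, distinct across basis elements; a cross-check against the cocharacter decomposition of Theorem~\ref{polynomialgrowth}(4), restricted to the supported multipartitions, confirms the count from the character-theoretic side.

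The main obstacle is the case-by-case bookkeeping. The seven algebras split according to whether $b$ and $c$ lie in the same or distinct homogeneous components, and according to which of $b,c,d$ is symmetric or skew; each choice both determines the ``surviving typed radical letters'' (governing the $2n$ vs.\ $3n$ coefficient) and recasts the identity $bc=cb=d$ so as to produce either one or two independent typed pairs (governing the $\binom{n}{2}$ vs.\ $2\binom{n}{2}$ coefficient). The most delicate instance is $W_{\nu_{3}}^{g,g}$, and similarly $W_{\nu_{3}}^{h,h}$, where $b$ and $c$ share a degree but have opposite $*$-signs, so $bc=cb$ couples two ordered typed pairs into a single surviving class while $d$ contributes a third independent radical direction, delivering exactly the $3n+2\binom{n}{2}$ of the stated formula; one must track these identifications carefully to avoid either double counting or losing a survivor.
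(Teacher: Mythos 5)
The paper does not actually prove this lemma: it is imported verbatim from \cite{WesAnaRaf} (listed as submitted), so there is no internal proof to compare against. That said, your direct computation is the natural route and your structural analysis of $W$ is correct ($bc=cb=d$, all other products of radical elements zero, $J^3=0$), and I checked that your counting scheme reproduces all seven stated values: the number of homogeneous-symmetry types of radical variables not absorbed into the degree-$1$ symmetric component gives the coefficient of $n$ ($2$ exactly when $d$ or one of $b,c$ lands in $W_1^+$ alongside $1_W$, as happens for $W_{\nu_1}^{g,g^{-1}}$ and $W_{\nu_2}^{g,g^{-1}}$), and the quadratic term comes from the unique pair of types carrying $b$ and $c$. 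One piece of your bookkeeping deserves tightening: in every case of this lemma $b$ and $c$ have \emph{distinct} types (different degree or different symmetry), so modulo $bc=cb$ there is only \emph{one} surviving monomial class per choice of typed indices, and the factor $2$ in $2\binom{n}{2}$ comes from the multinomial coefficient $\binom{n}{n-2,1,1}=n(n-1)$ attached to that multidegree in equation (\ref{293}), not from two independent ordered classes; the alternative outcome $\binom{n}{2}$ occurs only when the two radical generators share a type (as for $\mathcal{G}_{2,\tau}^{1,1}$), which never happens here. Your linear-independence step is also easier than you make it sound: the spanning set has at most one element per multidegree $\langle n\rangle$, distinct multidegrees are automatically independent in $P_n^{(G,*)}(A)$, and each single monomial is a non-identity by one evaluation, so no cross-evaluations are needed.
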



	\begin{lema} \cite{WesAnaRaf}\label{ultimolema} For distinct elements  $g,h\in G- \{1\}$ such that $gh\neq 1$, we have
 $c_n^{(G,*)}(W_{\nu_i}^{g,h})=1+3n+{2}\displaystyle\binom{n}{2}$, $i=1,2,3.$
	
	\end{lema}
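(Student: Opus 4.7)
I start with the structure of the underlying algebra. Writing $a=e_{12}+e_{34}$ and $b=e_{13}+e_{24}$, a direct computation shows that $W=FI_{4}\oplus Fa\oplus Fb\oplus F(ab)$ is commutative with $a^{2}=b^{2}=0$, $ab=ba=e_{14}$, and $(ab)a=(ab)b=(ab)^{2}=0$; in particular $J(W)^{3}=\{0\}$. Under the hypotheses $g,h\in G- \{1\}$, $g\neq h$, $gh\neq 1$, the four degrees $1,g,h,gh$ are pairwise distinct, so the four one-dimensional homogeneous components $FI_{4},Fa,Fb,F(ab)$ sit at pairwise distinct degrees. Inspecting the matrix formulas for $\nu_{1},\nu_{2},\nu_{3}$ shows that each of these four components is either wholly symmetric or wholly skew under the chosen involution, so for every $i\in\{1,2,3\}$ there are exactly four \emph{active} variable types, one per degree, while every other variable in $Y\cup Z$ is a $(G,*)$-identity of $W_{\nu_{i}}^{g,h}$.

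For any composition $\langle n\rangle=(n_{1},\ldots,n_{2t})$, the space $P_{\langle n\rangle}(W_{\nu_{i}}^{g,h})$ is zero unless $\langle n\rangle$ is supported on the four active types, which I abbreviate as $(n_{1},n_{g},n_{h},n_{gh})$ with $n_{1}+n_{g}+n_{h}+n_{gh}=n$. Since $W$ is commutative, every ordinary commutator $[x,y]$ is a $(G,*)$-identity of $W_{\nu_{i}}^{g,h}$, so I can reorder the variables of any multilinear $(G,*)$-monomial freely modulo $\IdGstar{W_{\nu_{i}}^{g,h}}$, reducing $P_{\langle n\rangle}(W_{\nu_{i}}^{g,h})$ to the span of the single canonical monomial in which variables appear grouped by type. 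The relations $a^{2}=b^{2}=0$ and $(ab)\cdot a=(ab)\cdot b=0$ then select exactly the five non-vanishing compositions $(n,0,0,0)$, $(n-1,1,0,0)$, $(n-1,0,1,0)$, $(n-1,0,0,1)$ and $(n-2,1,1,0)$, whose canonical monomials evaluate respectively to $I_{4},a,b,ab,ab$; all five values are nonzero, so $c_{\langle n\rangle}=1$ in each surviving case and $0$ otherwise.

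Inserting this information into \eqref{293} yields
$$c_{n}^{(G,*)}(W_{\nu_{i}}^{g,h}) \;=\; 1 + 3\binom{n}{n-1,1} + \binom{n}{n-2,1,1} \;=\; 1 + 3n + n(n-1) \;=\; 1+3n+2\binom{n}{2},$$
uniformly for $i=1,2,3$, because the sign pattern of $\nu_{i}$ only governs whether each active variable is written with a $y$ or a $z$ and does not affect the counting. The most delicate point is the upper bound $c_{\langle n\rangle}\leq 1$ in each surviving case: one must check that commutativity together with the above monomial relations generates enough of the $T_{(G,*)}$-ideal to reduce every multilinear $(G,*)$-monomial to its canonical form. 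A cleaner alternative, bypassing this re-derivation, is to quote the explicit generators of $\IdGstar{W_{\nu_{i}}^{g,h}}$ established in \cite{WesAnaRaf} and apply them directly as rewrite rules, at which point the proof collapses to the routine multinomial count carried out above.
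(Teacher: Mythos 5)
Your argument is correct. Note that the paper itself offers no proof of this lemma: it is imported wholesale from \cite{WesAnaRaf}, so your self-contained derivation is necessarily a ``different route,'' and it is a sound one. The key structural facts all check out: $a=e_{12}+e_{34}$ and $b=e_{13}+e_{24}$ satisfy $a^2=b^2=0$, $ab=ba=e_{14}$, $e_{14}a=e_{14}b=e_{14}^2=0$, so $W$ is commutative with $J(W)^3=\{0\}$; the hypotheses $g\neq h$, $g,h\neq 1$, $gh\neq 1$ force $1,g,h,gh$ to be pairwise distinct, so the four one-dimensional components live in distinct degrees; and each of $\nu_1,\nu_2,\nu_3$ acts by a sign on each component, so each component is purely symmetric or purely skew and the dimension count is insensitive to $i$. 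Your enumeration of the five surviving compositions and the resulting multinomial sum $1+3n+n(n-1)$ is exactly right (and is consistent at $n=1$, where the last composition is vacuous). One small remark: the step you flag as ``most delicate,'' namely $c_{\langle n\rangle}\leq 1$, needs less than you suggest. Since every commutator $[w_1,w_2]$ of variables is a $(G,*)$-identity of a commutative algebra, any two monomials of $P_{\langle n\rangle}$ are already congruent modulo $P_{\langle n\rangle}\cap\IdGstar{W_{\nu_i}^{g,h}}$, so the quotient is spanned by a single monomial with no need to verify that commutativity plus the monomial relations ``generate enough'' of the $T_{(G,*)}$-ideal; the monomial relations are only needed to decide which of these one-dimensional candidates actually vanish. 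With that reading, the appeal to the explicit generators from \cite{WesAnaRaf} in your last sentence is unnecessary, and your computation stands on its own as a proof of the stated formula via (\ref{293}).
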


Next, we introduce new $(G,*)$-algebras which will be important to get our desired classification. First, we consider the subalgebras of $UT_3$ defined by
$$M_4= F(e_{11}+e_{33})+Fe_{12}+Fe_{23}+Fe_{13}\mbox{ and }M_5= Fe_{22}+Fe_{12}+Fe_{23}+Fe_{13}.$$

For $g\in G$, we consider the following $(G,*)$-algebras:

\begin{enumerate}
			\item[$\bullet$]  $M_{4,\rho}^{g}$ is the algebra $M_4$ endowed with reflection involution and the only $G$-grading such that $$e_{11}+e_{33}\in (M_{4,\rho}^{g})_1,\,\, e_{12}, e_{23}\in (M_{4,\rho}^{g})_g\mbox{ and }e_{13}\in (M_{4,\rho}^{g})_{g^2}.$$

\item[$\bullet$] $M_{5,\rho}^{g}$ is the algebra $M_5$ endowed with reflection involution and the only $G$-grading such that $$e_{22}\in (M_{5,\rho}^{g})_1,\,\, e_{12}, e_{23}\in (M_{5,\rho}^{g})_g\mbox{ and }e_{13}\in (M_{5,\rho}^{g})_{g^2}.$$ \end{enumerate} \vspace{0.1cm}

Now, consider the following subalgebras of $UT_4$ 
$$M_6=F(e_{11}+e_{44})+Fe_{12}+Fe_{13}+Fe_{14}+Fe_{24}+Fe_{34}\mbox{ and }$$ $$M_7=F(e_{22}+e_{33})+Fe_{12}+Fe_{13}+Fe_{14}+Fe_{24}+Fe_{34}.$$

 On $M_6$ and $M_7$ we define the reflection involution $\rho$ and the involutions $\omega_1$ and $\omega_2$, respectively, as below:

$$\begin{pmatrix}
a& b & c & d\\
0&0 & 0 & e\\
0&0 & 0 & f \\
0&0 & 0 & a
\end{pmatrix}^{\omega_1}=\begin{pmatrix}
a& -f & e & -d\\
0&0 & 0 & c\\
0&0 & 0 & -b \\
0&0 & 0 & a
\end{pmatrix}\mbox{ and }\begin{pmatrix}
0& b & c & d\\
0&a & 0 & e\\
0&0 & a & f \\
0&0 & 0 & 0
\end{pmatrix}^{\omega_2}=\begin{pmatrix}
0& -f & e & -d\\
0&a & 0 & c\\
0&0 & a & -b \\
0&0 & 0 & 0
\end{pmatrix}  .$$ \vspace{0.1cm}

For $g,h\in G$ we define the following $(G,*)$-algebras:

\begin{enumerate}
			\item[$\bullet$] $M_{6,*}^{g,h}$ is the algebra $M_6$ endowed with the involution $*\in \{\rho,\omega_1\} $ and the only $G$-grading such that 
$$e_{11}+e_{44}\in (M_{6,*}^{g,h})_1,\, e_{12},e_{34}\in (M_{6,*}^{g,h})_g,\, e_{13},e_{24}\in (M_{6,*}^{g,h})_h \mbox{ and }e_{14}\in (M_{6,*}^{g,h})_{gh}.$$

\item[$\bullet$] $M_{7,*}^{g,h}$ is the algebra $M_7$ endowed with the involution $*\in \{\rho,\omega_2\}$ and the only $G$-grading such that 
$$e_{22}+e_{33}\in (M_{7,*}^{g,h})_1,\, e_{12},e_{34}\in (M_{7,*}^{g,h})_g,\, e_{13},e_{24}\in (M_{7,*}^{g,h})_h \mbox{ and }e_{14}\in (M_{7,*}^{g,h})_{gh}.$$
\end{enumerate} 

Now we introduce a new notation to make the construction of different gradings simpler and give new examples of $(G,*)$-algebras. In this context the algebra $A_3$, defined at the beginning of this section, will be denoted by $M_8$  and we define the algebra $M_9$ as below:
$$M_8= F(e_{11}+e_{66})+F(e_{23}+e_{45})+Fe_{12}+Fe_{13}+Fe_{46}+Fe_{56}\subset UT_6,$$
$$M_9= F(e_{11}+e_{66})+F(e_{23}-e_{45})+Fe_{12}+Fe_{13}+Fe_{46}+Fe_{56}\subset UT_6.$$

For $g,h\in G$, we define the following $(G,*)$-algebras:

\begin{enumerate}
			\item[$\bullet$] $M_{8,\rho}^{g,h}$ is the algebra $M_8$ endowed with reflection involution and the only $G$-grading such that 
$$e_{11}+e_{66}\in (M_{8,\rho}^{g,h})_1,\, e_{12},e_{56}\in (M_{8,\rho}^{g,h})_g,\, e_{23}+e_{45}\in (M_{8,\rho}^{g,h})_h, \,e_{13}\in (M_{8,\rho}^{g,h})_{gh}\mbox{ and } e_{46}\in (M_{8,\rho}^{g,h})_{gh}.$$

\item[$\bullet$] $M_{9,\rho}^{g,h}$ is the algebra $M_9$ endowed with reflection involution and the only $G$-grading such that 
$$e_{11}+e_{66}\in (M_{9,\rho}^{g,h})_1,\, e_{12},e_{56}\in (M_{9,\rho}^{g,h})_g,\, e_{23}-e_{45}\in (M_{9,\rho}^{g,h})_h, \,\mbox{ and } \, e_{13},e_{46}\in (M_{9,\rho}^{g,h})_{gh}.$$ \end{enumerate}

Next we consider the following subalgebras of $UT_6$ 
$$M_{10}= F(e_{11}+e_{22}+e_{55}+e_{66})+F(e_{12}-e_{56})+Fe_{13}+Fe_{23}+Fe_{45}+Fe_{46},$$
$$M_{11}= F(e_{11}+e_{22}+e_{55}+e_{66})+F(e_{12}+e_{56})+Fe_{13}+Fe_{23}+Fe_{45}+Fe_{46}.$$

For $g,h\in G$ we consider the following $(G,*)$-algebras:

\begin{enumerate}
			\item[$\bullet$] $M_{10,\rho}^{g,h}$ is the algebra $M_{10}$ endowed with reflection involution and the only $G$-grading such that 
$$e_{11}+e_{22}+e_{55}+e_{66}\in (M_{10,\rho}^{g,h})_1,\, e_{12}-e_{56}\in (M_{10,\rho}^{g,h})_g,\, e_{23},e_{45}\in (M_{10,\rho}^{g,h})_h \mbox{ and }e_{13},e_{46}\in (M_{10,\rho}^{g,h})_{gh}.$$

\item[$\bullet$] $M_{11,\rho}^{g,h}$ is the algebra $M_{11}$ endowed with reflection involution and the only $G$-grading such that 
$$e_{11}+e_{22}+e_{55}+e_{66}\in (M_{11,\rho}^{g,h})_1,\, e_{12}+e_{56}\in (M_{11,\rho}^{g,h})_g,\, e_{23},e_{45}\in (M_{11,\rho}^{g,h})_h \mbox{ and }e_{13},e_{46}\in (M_{11,\rho}^{g,h})_{gh}.$$ \end{enumerate}

By Lemma \ref{codimensoes}, $c_n^*(A)\leq c_n^{(G,*)}(A)$, for all $n$, and so according to \cite[Lemmas 20-25]{lamattinamisso} we have the following.

\begin{lema} 
    For all $A\in \{M_{i,\rho}^{g}, M_{j, \rho}^{g,h}, M_{6, \omega_1}^{g,h},M_{7, \omega_2}^{g,h}, \mid g,h\in G, i=4,5; j=6,7,9,10\}$ and $n\geq 3$ we have $$n(n-1)\leq c_n^{*}(A)\leq c_n^{(G,*)}(A)\,\,\,\mbox{ and }\,\,\,\frac{(n-1)(n-2)}{2}\leq c_n^{*}(M_{8, \rho}^{g,h})\leq c_n^{(G,*)}(M_{8, \rho}^{g,h}).$$
\end{lema}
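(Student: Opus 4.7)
The plan is to separate the two inequalities and handle them differently. The upper inequality $c_n^{*}(A)\le c_n^{(G,*)}(A)$ (and its analogue for $M_{8,\rho}^{g,h}$) is immediate from Lemma \ref{codimensoes}, so the content of the statement lies entirely in the lower bounds on the ordinary $*$-codimensions.

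My first observation would be that $c_n^{*}(A)$ is an invariant of the underlying $*$-algebra: the space $P_n^{*}/(P_n^{*}\cap \textnormal{Id}^{*}(A))$ is defined without reference to the $G$-grading, so the grading we placed on each of the algebras in the statement is irrelevant for the purpose of computing $*$-codimensions. Forgetting the gradings reduces the problem to the seven ordinary $*$-algebras $(M_4,\rho)$, $(M_5,\rho)$, $(M_j,\rho)$ for $j\in\{6,7,8,9,10\}$, together with $(M_6,\omega_1)$ and $(M_7,\omega_2)$.

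My second step would be to invoke the ungraded classification in \cite[Lemmas 20--25]{lamattinamisso}. There each of the underlying $*$-algebras listed above appears as a building block for minimal $*$-varieties of quadratic growth, and explicit families of multilinear $*$-polynomials are exhibited whose images in $P_n^{*}(A)$ are linearly independent. The construction is the usual one: one places a symmetric or skew variable outside a left-normed commutator involving the remaining $n-1$ variables, and verifies linear independence by evaluating the polynomials on the matrix-unit generators of the algebra. A direct count yields on the order of $n(n-1)$ independent $*$-polynomials in each of the nine cases covered by the first inequality, while in the $M_8$ case the reflection symmetry forces a nontrivial cancellation (two generating matrix units are identified, up to sign, with two others) so that the number of independent multilinear polynomials produced is only $\tfrac{(n-1)(n-2)}{2}$.

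The main obstacle in writing a fully self-contained proof would be precisely this linear-independence bookkeeping: one must organise the multilinear polynomials into a basis whose images can be separated by a single family of evaluations, and in the $M_8$ case one must identify which polynomial combinations vanish because of the reflection. Since \cite[Lemmas 20--25]{lamattinamisso} already executes this combinatorial argument in full for each of these ordinary $*$-algebras, the efficient strategy is to quote those lemmas and then append the graded upper bound supplied by Lemma \ref{codimensoes}.
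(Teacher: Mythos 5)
Your proposal takes essentially the same route as the paper: the authors give no proof beyond the sentence preceding the lemma, which derives the upper bounds from Lemma \ref{codimensoes} and attributes the lower bounds on $c_n^{*}$ to \cite[Lemmas 20--25]{lamattinamisso}, exactly as you do (the observation that the grading is irrelevant for the $*$-codimension being implicit there). Your additional commentary on how those cited lemmas are proved is plausible but not needed to match the paper's argument.
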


For the algebra $M_{11,\rho}^{g,h}$ we have the following.

\begin{lema}
For all $g,h\in G$ with $g\neq 1$ and $n\geq 3 $ we have $n(n-1)\leq c_n^{(G,*)}(M_{11,\rho}^{g,h}).$    
\end{lema}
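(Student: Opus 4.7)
The plan is to derive the bound directly from the decomposition in equation (\ref{293}), extracting the desired quadratic from a single well-chosen multipartition $\gen{n}$ whose multinomial coefficient $\binom{n}{\gen{n}}$ already equals $n(n-1)$, and then to verify that the corresponding partial codimension is nonzero.

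I would take $n-2$ symmetric variables of homogeneous degree $1$, one symmetric variable of homogeneous degree $g$, and one skew variable of homogeneous degree $h$ --- concretely, $y_{1,1},\ldots,y_{n-2,1}$, $y_{1,g}$, $z_{1,h}$. The associated $\gen{n}$ has nonzero entries $n-2$, $1$, $1$ in three (pairwise distinct) slots and $0$ elsewhere, so
\[
\binom{n}{\gen{n}}\;=\;\frac{n!}{(n-2)!\,1!\,1!}\;=\;n(n-1).
\]
By (\ref{293}), once I establish $c_{\gen{n}}(M_{11,\rho}^{g,h})\geq 1$, the inequality $c_n^{(G,*)}(M_{11,\rho}^{g,h})\geq n(n-1)$ follows.

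The key step is to exhibit a single multilinear $(G,*)$-polynomial in those variables that is not a $(G,*)$-identity. I would consider the monomial
\[
p\;=\;y_{1,1}\,y_{2,1}\,\cdots\,y_{n-2,1}\,y_{1,g}\,z_{1,h}
\]
and evaluate at $y_{i,1}\mapsto e_{11}+e_{22}+e_{55}+e_{66}$, $y_{1,g}\mapsto e_{12}+e_{56}$ and $z_{1,h}\mapsto e_{23}-e_{45}$. Since the reflection involution on $UT_6$ sends $e_{ij}$ to $e_{7-j,\,7-i}$, the first two elements are symmetric and the third is skew; all three belong to the prescribed homogeneous components of $M_{11,\rho}^{g,h}$. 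A short matrix computation in $UT_6$ gives $p(\,\cdot\,)=e_{13}$, which is nonzero, so $p\notin\IdGstar{M_{11,\rho}^{g,h}}$ and hence $c_{\gen{n}}(M_{11,\rho}^{g,h})\geq 1$.

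The main subtlety --- and the only place where the hypothesis $g\neq 1$ is used --- is to check that the three slots of $\gen{n}$ above remain pairwise distinct for \emph{every} $h\in G$: the slot of $y_{1,g}$ differs from that of the $y_{i,1}$ by degree (which requires $g\neq 1$) and from that of $z_{1,h}$ either by degree (if $h\neq g$) or by symmetric/skew type (if $h=g$); similarly, $z_{1,h}$ differs from the $y_{i,1}$ either by degree (if $h\neq 1$) or by symmetric/skew type (if $h=1$). This confirms that the multinomial coefficient is $n(n-1)$ uniformly in $h$, and I expect no further obstacles beyond the routine matrix calculation.
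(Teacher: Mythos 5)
Your proposal is correct and follows essentially the same route as the paper: the same distribution $(n-2,1,1)$ of variables over the slots $(1,+)$, $(g,+)$, $(h,-)$, the same evaluation $y_{i,1}\mapsto e_{11}+e_{22}+e_{55}+e_{66}$, $y_{1,g}\mapsto e_{12}+e_{56}$, $z_{1,h}\mapsto e_{23}-e_{45}$ yielding $e_{13}\neq 0$, and the same use of equation (\ref{293}) to extract the factor $\binom{n}{n-2,1,1}=n(n-1)$. The only (cosmetic) difference is that you conclude $c_{\gen{n}}\geq 1$ directly from the non-vanishing monomial, whereas the paper phrases the same fact as $m_{\gen{\lambda}}\neq 0$ for the highest weight vector $y_{1,1}^{n-2}y_{2,g}z_{3,h}$.
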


\begin{proof}

In fact, for $g,h\in G$ with $g\neq 1$ we consider the multipartition $\gen{\lambda}=((n-2)_{1^{+}}, (1)_{g^+}, (1)_{h^{-}})$ and the respective multitableau $T_{\gen{\lambda}}$ below
  $$\left(\begin{array}{l}
			\begin{array}{|c| c| c | }
				\hline
				\hspace{0.1cm} 1 & 	\hspace{0.1cm} \cdots  &  n-2	\hspace{0.1cm} \\
				\hline
			\end{array}\,\, {_{1^+}}, \,\,\,   \begin{array}{|c| }
			\hline
			\hspace{0.1cm} n-1  \\
			\hline
		\end{array}\,\,_{g^{+}}, \,\,\,\begin{array}{|c| }
			\hline
			\hspace{0.1cm} n  \\
			\hline
		\end{array}\,\,_{h^{-}}\end{array} \right)$$ with highest weight vectors is $f_{T_{\gen{\lambda}}}=y_{1,1}^{n-1}y_{2,g}z_{3,h}$. Considering the evaluation $y_{1,1}\mapsto e_{11}+e_{22}+e_{55}+e_{66}$, $y_{2,g}\mapsto e_{12}+e_{56}$ and $z_{3,h}\mapsto e_{23}-e_{45}$ we get $f_{T_{\gen{\lambda}}}\notin \textnormal{Id}^{(G,*)}(A)$ and so $m_{\gen{\lambda}}\neq 0$. Finally, by (\ref{293}), we have $$c_n^{(G,*)}(M_{11,\rho}^{g,h})\geq \binom{n}{n-2,1,1}d_{(n-2)}d_{(1)}d_{(1)}= n(n-1).$$
\end{proof}

Since the nilpotency index of the Jacobson radical of the algebras $M_{i}$, $i=6, \ldots , 11$, is equal to $3$ then, as a consequence of the previous lemmas and Proposition \ref{prop1}, we have the following.

\begin{corolario}\label{corolarioquad}
    For all $A\in \{M_{i,\rho}^{g}, M_{j, \rho}^{g,h}, M_{6, \omega_1}^{g,h},M_{7, \omega_2}^{g,h}, M_{11,\rho}^{v,h} \mid g,h,v\in G, v\neq 1; i=4,5; j=6,\ldots ,10\}$ we have $c_n^{(G,*)}(A)\approx \alpha n^2$, for some $\alpha >0$ depending on $A$.
\end{corolario}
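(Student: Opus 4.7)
The plan is to sandwich the codimension sequence between two quadratic functions. The lower bound is essentially already in hand: the two lemmas immediately preceding the corollary provide $c_n^{(G,*)}(A) \geq c\, n^2$ for a positive constant $c$ (depending on $A$), covering every algebra listed in the corollary. So the real content of the proof is to produce a matching quadratic \emph{upper bound}, and this is where Proposition \ref{prop1} comes in.

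First I would verify, for each algebra $A$ in the list, that $A$ has polynomial growth. The cleanest route is condition (3) of Theorem \ref{polynomialgrowth}: a direct inspection of the defining bases shows that each of $M_{4},\,M_{5},\,M_{6},\,M_{7},\,M_{8},\,M_{9},\,M_{10},\,M_{11}$ has at most a one-dimensional semisimple part $A/J(A)$ (either $F(e_{11}+e_{kk})$, $Fe_{22}$, $F(e_{22}+e_{33})$, or $F(e_{11}+\cdots+e_{mm})$), so each of the $(G,*)$-algebras in the statement is $T_{(G,*)}$-equivalent to itself taken as the single summand $B_1$, and therefore has polynomial growth by Theorem \ref{polynomialgrowth}.

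Next I would compute the nilpotency index of the Jacobson radical. In each of these algebras, $J(A)$ is spanned by the off-diagonal matrix units appearing in the basis. Using $e_{ij}e_{kl} = \delta_{jk}e_{il}$, a short direct check shows that $J(A)^{2}$ is non-zero (it is spanned by the ``long'' matrix units such as $e_{13}$, $e_{14}$, $e_{46}$, etc.) while $J(A)^{3} = \{0\}$. Hence the nilpotency index of $J(A)$ is exactly $q = 3$ in every case. Proposition \ref{prop1} then yields
\[
c_n^{(G,*)}(A) \leq \delta\, n^{q-1} = \delta\, n^{2}
\]
for some constant $\delta > 0$ depending on $A$.

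Combining this upper bound with the lower bounds from the two lemmas preceding the corollary gives constants $0 < c \leq \delta$ with $c\, n^{2} \leq c_n^{(G,*)}(A) \leq \delta\, n^{2}$ for all sufficiently large $n$, which is exactly the asymptotic $c_n^{(G,*)}(A) \approx \alpha\, n^{2}$ claimed. No step is genuinely hard here: the only point requiring care is to avoid merely \emph{asserting} that $A$ has polynomial growth (needed to invoke Proposition \ref{prop1}) — one must instead cite Theorem \ref{polynomialgrowth}(3) using the one-dimensional semisimple quotient, and also to correctly include $M_4$ and $M_5$ in the nilpotency-index computation (the sentence preceding the corollary mentions only $i=6,\ldots,11$, but $M_4$ and $M_5$ are easily seen to have $J^{3}=0$ and $J^{2}\neq 0$ as well).
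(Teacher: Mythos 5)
Your proposal is correct and follows essentially the same route as the paper: the paper likewise combines the lower bounds from the two preceding lemmas with the upper bound $c_n^{(G,*)}(A)\leq \delta n^{q-1}$ from Proposition \ref{prop1}, using that the Jacobson radical has nilpotency index $q=3$. Your extra care in justifying polynomial growth via Theorem \ref{polynomialgrowth}(3) and in checking $M_4$, $M_5$ explicitly only fills in steps the paper leaves implicit.
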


 \section{Decomposing the Jacobson radical}

In this section we focus our attention in the finite-dimensional $(G,*)$-algebras of type $A=F+J(A)$. Here, we use the previous $(G,*)$-algebras in order to get an useful decomposition of $A$ into a direct sum of well understood  algebras.

We start recalling (see \cite[Lemma 2]{Giambruno}) that if $A=F+ J(A)$ is a finite-dimensional \Gstar algebra over $F$ then $J(A)$ is a $T_{(G,*)}$-ideal of $A$ (see \cite[Theorem 2.8]{Lorena}) and can be decomposed into the direct sum of $G$-graded subspaces
	\begin{equation} \label{6}
		J(A)=J_{00}+J_{10}+J_{01}+J_{11},\mbox{ where }
	\end{equation} $$J_{00}=\{j\in J\mid 1_Fj=j1_F=0\},\quad J_{10}=\{j\in J\mid 1_Fj=j 
    \mbox{ and }j1_F=0\}$$
      $$J_{01}=\{j\in J\mid 1_Fj=0 
    \mbox{ and }j1_F=j\}\quad \mbox{ and }\quad J_{11}=\{j\in J\mid 1_Fj=j1_F=j\}.$$ Moreover, the subspaces $J_{00}$ and $J_{11}$ are invariant under the involution, $(J_{10})^{*}=J_{01}$ and for $i, k,r,s \in \{0, 1\}$ we have $J_{ik}J_{rs} \subset \delta_{kr} J_{is}$, where $\delta_{kr}$ is the Kronecker delta function. Therefore, we may decompose $J_{00}$ and $J_{11}$ into symmetric and skew components as follow: 
	$$J_{00}= \underset{g\in G}{\sum} ((J_{00})_g^{+}+ (J_{00})_g^{-})\mbox{ and }J_{11}= \underset{g\in G}{\sum} ((J_{11})_g^{+}+ (J_{11})_g^{-}).$$

Consider the following sets of $(G,*)$-algebras
$$\mathcal{I}_1= \{M_{4,\rho}^g\mid g\in G\}\,\,\,\mbox{ and }\,\,\,\mathcal{I}_2= \{M_{5,\rho}^g\mid g\in G\}.$$

\begin{lema} \label{lema1} For the $(G,*)$-algebra $A = F + J(A)$ we have,
    \begin{enumerate}
        \item[1)] If $Q \not\in \textnormal{var}^{(G,*)}(A)$, for all $Q\in \mathcal{I}_1$, then $aa^*=0$, for all $a\in (J_{10})_g$ and for all $g\in G.$ 
        \item[2)] If $Q \not\in \textnormal{var}^{(G,*)}(A)$, for all $Q\in \mathcal{I}_2$, then $aa^*=0$, for all $a\in (J_{01})_g$ and for all $g\in G.$ 
    \end{enumerate}
\end{lema}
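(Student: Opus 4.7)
Both parts are established by contrapositive via the same construction, so I will describe (1) in detail and indicate the symmetric changes for (2) at the end. Suppose that there exist $g\in G$ and $a\in (J_{10})_g$ with $aa^*\ne 0$; the goal is to produce $M_{4,\rho}^g\in\varGstar(A)$, contradicting the hypothesis. Let $B$ be the \Gstar subalgebra of $A$ generated by $a$ (so $B$ automatically contains $1_A$ and $a^*$), and let $I$ be the $T_{(G,*)}$-ideal of $B$ generated by the symmetric element $a^*a$. The plan is to exhibit an isomorphism $B/I\cong M_{4,\rho}^g$ of \Gstar algebras, which will immediately give $M_{4,\rho}^g\in\varGstar(B)\subseteq\varGstar(A)$.

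The Peirce rule $J_{ij}J_{kl}\subseteq\delta_{jk}J_{il}$ combined with $a\in J_{10}$ and $a^*\in J_{01}$ immediately yields $a^2=(a^*)^2=0$, $aa^*\in (J_{11})_{g^2}^+$, and $a^*a\in (J_{00})_{g^2}^+$. Consequently $B$ is spanned by $1$ and the strictly alternating monomials in $a$ and $a^*$, and every such monomial of length at least two except $aa^*$ contains $a^*a$ as a subword and hence lies in $I$. So $B=F\cdot 1+Fa+Fa^*+Faa^*+I$, and everything reduces to showing that the four cosets $\bar 1,\bar a,\overline{a^*},\overline{aa^*}$ are linearly independent in $B/I$.

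The main obstacle, which I expect to be the most delicate step, is proving $aa^*\notin I$. Since $I$ is stable under left and right multiplication by $1_A$, it respects the Peirce decomposition $J=J_{00}\oplus J_{10}\oplus J_{01}\oplus J_{11}$. Tracking the Peirce types of an arbitrary generator $b(a^*a)c$ of $I$ with $b,c\in B$ shows that contributions to $J_{11}$ arise only from $b\in B\cap J_{10}$ paired with $c\in B\cap J_{01}$; combined with the identity $a(a^*a)^r a^*=(aa^*)^{r+1}$ for $r\ge 1$, this yields $I\cap J_{11}=\spam\{(aa^*)^k:k\ge 2\}$. If $aa^*$ belonged to this span, then $aa^*=(aa^*)\,y$ for some nilpotent $y\in F[aa^*]\cap J$, and iterating produces $aa^*=(aa^*)\,y^N=0$ for $N$ large, contradicting the hypothesis. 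Analogous descriptions $I\cap J_{10}=\spam\{a(a^*a)^r:r\ge 1\}$ and $I\cap J_{01}=\spam\{(a^*a)^r a^*:r\ge 1\}$, together with the same nilpotency-iteration trick, force $a,a^*\notin I$; and $1\notin I$ trivially since $I\subseteq J(B)$.

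With $\{\bar 1,\bar a,\overline{a^*},\overline{aa^*}\}$ confirmed as a basis of $B/I$ sitting in the distinct components $F\cdot 1$, $(J_{10})_g$, $(J_{01})_g$, $(J_{11})_{g^2}^+$, the induced multiplication collapses to $a\cdot a^*=aa^*$ with all other length-two products zero, and every product involving $\overline{aa^*}$ vanishes (each such product contains an $a^2$, $(a^*)^2$, or $a^*a$ subword). The involution swaps $\bar a\leftrightarrow\overline{a^*}$ and fixes $\bar 1$ and $\overline{aa^*}$, and the grading places the four basis vectors in $B_1,B_g,B_g,B_{g^2}$ respectively, which is precisely the \Gstar structure of $M_{4,\rho}^g$ under $1\leftrightarrow e_{11}+e_{33}$, $a\leftrightarrow e_{12}$, $a^*\leftrightarrow e_{23}$, $aa^*\leftrightarrow e_{13}$. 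For part (2) one runs the same argument with $a\in (J_{01})_g$: the Peirce computations now put $aa^*\in (J_{00})_{g^2}^+$ and $a^*a\in (J_{11})_{g^2}^+$, the analogous identity $a(a^*a)^r a^*=(aa^*)^{r+1}$ identifies $I\cap J_{00}=\spam\{(aa^*)^k:k\ge 2\}$, and the quotient $B/I$ is identified with $M_{5,\rho}^g$ via $1\leftrightarrow e_{22}$, $a\leftrightarrow e_{12}$, $a^*\leftrightarrow e_{23}$, $aa^*\leftrightarrow e_{13}$.
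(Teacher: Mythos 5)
Your proof is correct and takes essentially the same route as the paper: form the $(G,*)$-subalgebra generated by $1_F$, $a$ and $a^*$, quotient by the $T_{(G,*)}$-ideal generated by $a^*a$, and identify the quotient with $M_{4,\rho}^g$ (resp.\ $M_{5,\rho}^g$); the paper merely asserts that $a,a^*,aa^*\notin I$, whereas you justify this via the Peirce decomposition and the nilpotency of $J(A)$, which is a welcome filling-in of detail. The only cosmetic slip is the reference to $1_A$: in the relevant situation $1_F$ is an idempotent that is not a unit of $A$ (otherwise $J_{10}=\{0\}$), so the subalgebra does not ``automatically'' contain it and one should include $1_F$ among the generators, as the paper does.
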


\begin{proof}
  Let $g\in G$ and suppose $aa^*\neq 0$, for some $a\in (J_{10})_g$. Consider $R$ the $(G,*)$-subalgebra of $A$ generated by $1_F,a$ and $a^*$ and let $I$ be the $T_{(G,*)}$-ideal of $R$ generated by $a^*a$. Since $a,a^*,aa^*\notin I$ then $R/I$ is linearly generated by $\overline{1_F}$, $\overline{a}$, $\overline{a^*}$ and $\overline{aa^*}$ and so the map $\varphi:R/I\rightarrow M_{4,\rho}^g$ given by 
    $\varphi(\overline{1_F})=e_{11}+e_{33}$, $\varphi(\overline{a})=e_{12}$, $\varphi(\overline{a^*})=e_{23}$ and $\varphi(\overline{aa^*})=e_{13}$ defines an isomorphism of $(G,*)$-algebras.
    
    The second item is proved similarly.

\end{proof}

Define the following sets $$\mathcal{I}_3= \mathcal{I}_1\cup \mathcal{I}_2\cup \{M_{8,\rho}^{g,h}\mid g,h\in G\}\,\,\mbox{ and }\,\,\mathcal{I}_4= \mathcal{I}_1\cup \mathcal{I}_2\cup \{M_{9,\rho}^{g,h}\mid g,h\in G\}.$$

\begin{lema} \label{lema2} For the $(G,*)$-algebra $A = F + J(A)$ we have,
\begin{enumerate}
    \item[1)] If $Q \not\in \textnormal{var}^{(G,*)}(A)$, for all $Q\in \mathcal{I}_3$, then {$J_{10}(J_{00})_g^{+}=(J_{00})_g^{+}J_{01}=0$}, for all $g\in G$. 
    \item[2)] If $Q \not\in \textnormal{var}^{(G,*)}(A)$, for all $Q\in \mathcal{I}_4$, then {$J_{10}(J_{00})_g^{-}=(J_{00})_g^{-}J_{01}=0$}, for all $g\in G$.
\end{enumerate}
\end{lema}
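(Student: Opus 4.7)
The two parts have parallel proofs, so I focus on 1) and indicate the adjustments for 2). I will argue by contradiction, constructing from a supposed nonzero product a $(G,*)$-homomorphic image of a subalgebra of $A$ isomorphic to a member of $\mathcal{I}_3$ (respectively $\mathcal{I}_4$), in the style of the proof of Lemma \ref{lema1}. A preliminary reduction is that it suffices to prove the left-hand annihilation $J_{10}(J_{00})_g^{+}=0$, since applying $*$ to any witness of $(J_{00})_g^{+}J_{01}\neq 0$ produces an element $c^{*}b=(bc)^{*}\neq 0$ with $c^{*}\in J_{10}$ and $b=b^{*}\in (J_{00})_g^{+}$.

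Suppose then that $ab\neq 0$ for some homogeneous $a\in (J_{10})_{g'}$ and $b\in (J_{00})_h^{+}$. Let $R$ be the $(G,*)$-subalgebra of $A$ generated by $1_F,a,a^{*},b$. Using the multiplication table of the $J_{ik}$'s (in particular $J_{10}J_{10}=J_{01}J_{01}=0$ and $J_{00}J_{10}=J_{01}J_{00}=0$) together with $aa^{*}=0$, which holds by Lemma \ref{lema1} since $\mathcal{I}_1\subset \mathcal{I}_3$, one verifies that $R$ is spanned by $1_F,a,a^{*},b,ab,ba^{*}$ and by the potentially nonzero elements $a^{*}a,b^{2},aba^{*}$ (and iterated products thereof, all of which reduce to these three by the same $J_{ik}$-rules). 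Let $I$ be the $T_{(G,*)}$-ideal of $R$ generated by $a^{*}a$, $b^{2}$ and $aba^{*}$; by construction $R/I$ is linearly spanned by the classes of $1_F,a,a^{*},b,ab,ba^{*}$.

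Now define $\varphi\colon R/I\to M_{8,\rho}^{g',h}$ on these six generators by
$$\overline{1_F}\mapsto e_{11}+e_{66},\ \overline{a}\mapsto e_{12},\ \overline{a^{*}}\mapsto e_{56},\ \overline{b}\mapsto e_{23}+e_{45},\ \overline{ab}\mapsto e_{13},\ \overline{ba^{*}}\mapsto e_{46}.$$
A routine check, case by case on products of pairs of basis elements, shows that $\varphi$ respects multiplication (all vanishing products in $M_8$ correspond either to the $J_{ik}$-rules above or to the generators of $I$), is compatible with the reflection involution ($b$ is symmetric and $e_{23}+e_{45}$ is fixed under $\rho$; $(ab)^{*}=ba^{*}$ corresponds to $(e_{13})^{\rho}=e_{46}$), and is compatible with the $G$-grading (degrees $1,g',g',h,g'h,g'h$ on both sides). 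Hence $\varphi$ is a surjective $(G,*)$-homomorphism onto $M_{8,\rho}^{g',h}$, so $M_{8,\rho}^{g',h}\in \textnormal{var}^{(G,*)}(A)$, contradicting the hypothesis. For part 2), the skew element $b\in (J_{00})_h^{-}$ plays the role of $e_{23}-e_{45}$ (skew under $\rho$), and $\varphi$ is defined analogously into $M_{9,\rho}^{g',h}$, with the sign adjustment $\overline{ba^{*}}\mapsto -e_{46}$ forced by $(ab)^{*}=b^{*}a^{*}=-ba^{*}$.

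The expected main obstacle is the verification that $R/I$ really has the six-element basis claimed, i.e.\ that none of the six spanning elements becomes zero in $R/I$; this amounts to showing that the $T_{(G,*)}$-ideal generated by $a^{*}a,b^{2},aba^{*}$ does not accidentally contain $ab$, $ba^{*}$, $a$, $a^{*}$, $b$ or $1_F$. The cleanest way to handle this is to exhibit the map $\varphi$ into the explicit matrix model $M_{8,\rho}^{g',h}$ first, and observe that the six images are linearly independent in $UT_6$; this forces $\varphi$ to be an isomorphism and simultaneously confirms the basis, bypassing a direct combinatorial analysis of $I$.
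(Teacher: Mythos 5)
Your argument follows the paper's proof almost verbatim: the same subalgebra $R=\langle 1_F,a,a^*,b\rangle$, essentially the same ideal $I$ (the paper takes $I=\langle b^2, aba^*\rangle_{T_{(G,*)}}$, since $a^*a=0$ already follows from Lemma \ref{lema1}, item 2, because $\mathcal{I}_2\subseteq\mathcal{I}_3$ --- adding $a^*a$ as an extra generator of $I$ is harmless but unnecessary), the same six-element spanning set of $R/I$, the same isomorphisms onto $M_{8,\rho}$ and $M_{9,\rho}$ with the sign twist $\overline{ba^*}\mapsto -e_{46}$ in the skew case, and the same dualization by $*$ to obtain the right-hand annihilation. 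The one place where you diverge is the closing ``cleanest way'' remark, and that step is circular: a linear map prescribed on a \emph{spanning} set of $R/I$ is well-defined only if the prescription respects every linear relation among those elements, which is precisely the question of whether $1_F,a,a^*,b,ab,ba^*$ are linearly independent modulo $I$; observing that their intended images are independent in $UT_6$ gives no information about $R/I$ until $\varphi$ is known to exist. So the direct analysis of $I$ cannot be bypassed. It is, however, genuinely short and is what the paper does: $I$ is spanned by $aba^*, ab^2, b^2a^*, ab^2a^*, b^2,\ldots,b^k$, and each of $a,a^*,b,ab,ba^*$ lies in a Peirce/graded component, and with a $b$-degree, different from those of the spanning elements of $I$ in the same component (for instance the only elements of $I$ lying in $J_{10}$ are combinations of the $ab^j$ with $j\geq 2$, and $ab\in\mbox{span}\{ab^j\mid j\geq 2\}$ would force $ab=0$ by nilpotency of $b$). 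With that verification restored in place of the shortcut, your proof coincides with the paper's.
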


\begin{proof}

Assume that $J_{10}(J_{00})_g^{+}\neq 0$ (resp. $J_{10}(J_{00})_g^{-}\neq 0$) and consider $a\in (J_{10})_h$ and $b\in (J_{00})_g^{+}$ (resp. $ b\in (J_{00})_g^{-}$) such that $ab\neq 0$, for some $g,h\in G.$ Let $R$ be the $(G,*)$-subalgebra of $A$ generated by $1_F, a,a^*$ and $b$ with induced $G$-grading and induced involution. Since $Q\notin \textnormal{var}^G(A)$, for all $Q\in \mathcal{I}_1$, then we have $aa^*=a^*a=0$. Moreover, by construction, we also have $a^2=(a^*)^2=0$. Consider $I$ the $T_{(G,*)}$-ideal of $R$ generated by $b^2$ and $aba^*$ and notice that it is linearly generated by $aba^*, ab^2, b^2a^*,ab^2a^*, b^2, b^3 , \ldots, b^k$, for some $k\geq 1$. An easy calculation proves that $a,a^*,b,ab$ and $ba^*$ do not belong to $I$ and so $R/I$ is a $(G,*)$-algebra linearly generated by $\overline{a},\overline{a^*},\overline{b},\overline{ab}$ and $\overline{ba^*}$. Finally,

\begin{enumerate}
    \item[1.] if $b\in (J_{00})_g^{+}$ then the map $\varphi_1: R/I\rightarrow M_{8,\rho}^{h,g}$ defined by $\varphi_1(\overline{1}_F)=e_{11}+e_{66},\, \varphi_1(\overline{a})=e_{12}, \,\varphi_1(\overline{a^*})=e_{56}, \,\varphi_1(\overline{b})=e_{23}+e_{45}, \,\varphi_1(\overline{ab})=e_{13}\mbox{ and } \varphi_1(\overline{ba^*})=e_{46}$ is an isomorphism of $(G,*)$-algebras and so $M_{8,\rho}^{h,g}\in \textnormal{var}^G(A)$,

    \item[2.] if $b\in (J_{00})_g^{-}$ then the map $\varphi_2: R/I\rightarrow M_{9,\rho}^{h,g}$ defined by $\varphi_2(\overline{1}_F)=e_{11}+e_{66},\, \varphi_2(\overline{a})=e_{12}, \,\varphi_2(\overline{a^*})=e_{56}, \,\varphi_2(\overline{b})=e_{23}-e_{45}, \,\varphi_2(\overline{ab})=e_{13}\mbox{ and } \varphi_2(\overline{ba^*})=-e_{46}$ is an isomorphism of $(G,*)$-algebras and so $M_{9,\rho}^{h,g}\in \textnormal{var}^G(A).$
\end{enumerate}   

These contradictions prove that $J_{10}(J_{00})_g^{+}=J_{10}(J_{00})_g^{-}=0$. Since $(J_{00})_g^{+}J_{01}=(J_{10}(J_{00})_g^{+})^*=0$ and $(J_{00})_g^{-}J_{01}=-(J_{10}(J_{00})_g^{-})^*=0$ then the proof follows.

\end{proof}

\begin{observacao} \label{obs1}
    Notice that if $a\ne 0$ is an element of a $(G,*)$-algebra $A$ such that $a^*= \alpha a$, for some $\alpha\neq 0$, then we must have $a=(a^*)^*=(\alpha a)^*= \alpha a^*=\alpha^2 a$ and so $a^*=a$ or $a^*=-a$. 
\end{observacao}

Consider the following sets of $(G,*)$-algebras $$\mathcal{I}_5= \mathcal{I}_3\cup \mathcal{I}_4\cup\{M_{6,\omega_1}^{g,h}, M_{6,\rho}^{r,s}\mid g,h,r,s\in G, r\neq s\}\,\mbox{ and }\,$$
$$\mathcal{I}_6= \mathcal{I}_3\cup \mathcal{I}_4\cup\{M_{7,\omega_2}^{g,h}, M_{7,\rho}^{r,s}\mid g,h,r,s\in G,r\neq s\}.$$

\begin{lema} \label{lema3} For the $(G,*)$-algebra $A = F + J(A)$ we have,

\begin{enumerate}
    \item[1)] If $Q \not\in \textnormal{var}^{(G,*)}(A)$, for all $Q\in \mathcal{I}_5$, then $J_{10}J_{01}=0$.
     \item[2)] If $Q \not\in \textnormal{var}^{(G,*)}(A)$, for all $Q\in \mathcal{I}_6$, then $J_{01}J_{10}=0$.
\end{enumerate}
\end{lema}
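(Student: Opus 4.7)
The plan mirrors Lemmas \ref{lema1} and \ref{lema2}. For part (1), I would argue by contradiction: assume $J_{10}J_{01}\neq 0$, choose homogeneous $a\in (J_{10})_g$ and $b\in (J_{01})_h$ with $ab\neq 0$, and work inside the $(G,*)$-subalgebra $R\subset A$ generated by $1_F,a,a^*,b,b^*$. The goal is to exhibit a $(G,*)$-isomorphism from $R$ (or a suitable quotient $R/I$) onto one of the forbidden algebras $M_{6,\rho}^{g,h}$ or $M_{6,\omega_1}^{g,h}$ in $\mathcal{I}_5$, which yields the contradiction.

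The first task is to describe $R$ explicitly. The Peirce rules give $a^2=(a^*)^2=b^2=(b^*)^2=ab^*=b^*a=a^*b=ba^*=0$, and Lemma \ref{lema1} applied to both $J_{10}$ and $J_{01}$ gives $aa^*=a^*a=bb^*=b^*b=0$. Writing $ba\in (J_{00})_{gh}$ as $\tfrac12(ba+a^*b^*)+\tfrac12(ba-a^*b^*)$ and invoking Lemma \ref{lema2} (available since $\mathcal{I}_3\cup\mathcal{I}_4\subset\mathcal{I}_5$), every length-three monomial in $\{a,a^*,b,b^*\}$ collapses to zero. Consequently $R=\mbox{span}_F\{1_F,a,a^*,b,b^*,ab,b^*a^*\}$ and, setting $c:=ab$, the elements $c$ and $c^*=b^*a^*$ lie in $(J_{11})_{gh}$ with $r\cdot c=c\cdot r=0$ for every $r\in\{a,a^*,b,b^*\}$ (and analogously for $c^*$).

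Now I would split on whether $g=h$. If $g=h$, then $a+b^*\in (J_{10})_g$ and Lemma \ref{lema1} gives $0=(a+b^*)(a^*+b)=aa^*+ab+b^*a^*+b^*b=c+c^*$, forcing $c^*=-c$. I then set $\varphi\colon R\to M_{6,\omega_1}^{g,g}$ by $1_F\mapsto e_{11}+e_{44}$, $a\mapsto e_{12}$, $a^*\mapsto -e_{34}$, $b^*\mapsto e_{13}$, $b\mapsto e_{24}$, $c\mapsto e_{14}$; the skew behavior of $e_{14}$ under $\omega_1$ matches the forced skew-symmetry of $c$, and a direct dimension count confirms $\varphi$ is a $(G,*)$-isomorphism. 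If $g\neq h$, then at least one of $c\pm c^*$ is nonzero. When $c+c^*\neq 0$, the subspace $F(c-c^*)$ is a $*$-stable, two-sided annihilating one-dimensional subspace of $R$, hence a $T_{(G,*)}$-ideal, and the analogous map with $a^*\mapsto e_{34}$ gives $R/F(c-c^*)\cong M_{6,\rho}^{g,h}$; here the condition $r\neq s$ in the definition of $\mathcal{I}_5$ is precisely satisfied. When $c+c^*=0$, $c$ is already skew and $R\cong M_{6,\omega_1}^{g,h}$ directly. In every sub-case the hypothesis is violated, so $J_{10}J_{01}=0$. Part (2) is entirely parallel: for $a\in (J_{01})_g$ and $b\in (J_{10})_h$ with $ab\neq 0$, the product $ab$ now sits in $(J_{00})_{gh}$ and the targets become $M_{7,\rho}^{g,h}$ and $M_{7,\omega_2}^{g,h}$ from $\mathcal{I}_6$.

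The main obstacle is precisely the case $g=h$: since $M_{6,\rho}^{g,g}$ does not appear in $\mathcal{I}_5$, the natural symmetric sub-case has to be ruled out a priori, and the trick of pushing $a+b^*$ through Lemma \ref{lema1} is exactly what forces $c$ to be skew in this regime. Once that is in place the remaining work is the routine but careful verification that each $\varphi$ respects the $G$-grading, commutes with the involutions, preserves every product, and matches dimensions with the target so that it is genuinely an isomorphism rather than only a surjection.
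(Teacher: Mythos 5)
Your overall strategy is the paper's: the same generating set $\{1_F,a,a^*,b,b^*\}$, the same Peirce and Lemma~\ref{lema1}/\ref{lema2} relations, the same trick of pushing $a+b^*$ through Lemma~\ref{lema1} when $g=h$ to force $(ab)^*=-ab$, and the same target algebras $M_{6,\omega_1}^{g,h}$ and $M_{6,\rho}^{g,h}$ (with $g\neq h$ in the latter), with $M_7$ playing the parallel role in part~(2). Your organization of the $g\neq h$ case (splitting on $c+c^*\neq 0$ and quotienting by $F(c-c^*)$ to land in $M_{6,\rho}^{g,h}$) is a harmless variant of the paper's (which splits on whether $c^*\in Fc$ and, when it is not, quotients by $F(c+c^*)$ to land in $M_{6,\omega_1}^{g,h}$).

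There is, however, one genuine gap: your claim that $R=\mathrm{span}_F\{1_F,a,a^*,b,b^*,ab,b^*a^*\}$ is false as stated, because you have dropped $ba$ and $(ba)^*=a^*b^*$. The element $ba$ lies in $(J_{00})_{hg}$, and Lemma~\ref{lema2} only kills the \emph{products} $J_{10}(J_{00})_g^{\pm}$ and $(J_{00})_g^{\pm}J_{01}$ (hence the length-three words $a(ba)$, $(ba)b$, etc.), not the length-two word $ba$ itself; indeed, the vanishing of $J_{01}J_{10}$ is exactly the content of part~(2) of this lemma, proved under the \emph{different} hypothesis $\mathcal{I}_6$, so it is not available while proving part~(1). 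If $ba\neq 0$, your maps $\varphi$ are not isomorphisms: $R$ is too large and $ba\in J_{00}$ has no possible image in $M_6$, whose Peirce component $J_{00}$ is zero. The paper closes this gap by first passing to the quotient $R/I$, where $I=\langle ba\rangle_{T_{(G,*)}}$ is linearly spanned by $ba$ and $(ba)^*$, and checking that $a,a^*,b,b^*,ab,(ab)^*$ survive in the quotient; your argument then goes through verbatim in $R/I$. With that single repair your proof is correct.
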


\begin{proof}
 Suppose that there exist $a\in (J_{10})_g$ and $b\in (J_{01})_h$ such that $ab\neq 0$, for some $g,h\in G$. Let $R$ be the $(G,*)$-subalgebra of $A$ generated by $1_F,a,a^*,b$ and $b^*$. By construction and using Lemmas \ref{lema1} and \ref{lema2}, we have $c=c^*=0$, for all $c\in \{a^2,b^2,aba,bab,aa^*,a^*a,b^*b,bb^*,ab^*,a^*b\}$ and so $R$ is linearly generated by $1_F,a,a^*,b,b^*,ab,ba,(ab)^*$ and $(ba)^*$. Let $I$ be the $T_{(G,*)}$-ideal generated by $ba$ and notice that it is linearly generated by $ba$ and $(ba)^*$ and so $a,a^*,b,b^*,ab,(ab)^*\notin I$. Thus, $R/I$ is linearly generated by $\overline{1_F},\overline{a},\overline{a^*},\overline{b},\overline{b^*},\overline{ab}$ and $\overline{(ab)^*}$.

{\bf Case 1}: $g=h$.

In this case, we notice that $\overline{a}+\overline{b^*}\in (J_{10})_g$ and so, by Lemma \ref{lema1}, we have $(\overline{a}+\overline{b^*})(\overline{a}+\overline{b^*})^*=0$. Since $\overline{aa^*}=\overline{b^*b}=0$ we have $\overline{(ab)^*}=\overline{b^*a^*}=-\overline{ab}$ then $\overline{ab}\in (\overline{J_{11}})_{g^2}^-$. Notice that $R/I\cong M_{6,\omega_1}^{g,g}$ through the linear map $\varphi_1(\overline{1_F})=e_{11}+e_{44}$, 
$\varphi_1(\overline{a})=e_{12}$, $\varphi_1(\overline{a^*})=-e_{34}$, $\varphi_1(\overline{b})=e_{24}$, 
$\varphi_1(\overline{b^*})=e_{13}$ and $\varphi_1(\overline{ab})=e_{14}$.

{\bf Case 2}: $g\neq h$.

In this case, if $\overline{(ab)^*}=\alpha \overline{ab}$, for some $\alpha\in F$, then, by Remark \ref{obs1}, we must have $\overline{(ab)^*}=-\overline{ab}$ or $\overline{(ab)^{*}}=\overline{ab}$. In the first case, the map $\varphi_1$ defined above defines an isomorphism of $(G,*)$-algebras between $R/I$ and $M_{6,\omega_1}^{g,h}$. In the second case, we notice that the linear map $\varphi_2: R/I\rightarrow M_{6,\rho}^{g,h}$ defined by $\varphi_2(\overline{1_F})=e_{11}+e_{44}$, 
$\varphi_2(\overline{a})=e_{12}$, $\varphi_2(\overline{a^*})=e_{34}$, $\varphi_2(\overline{b})=e_{24}$, 
$\varphi_2(\overline{b^*})=e_{13}$, $\varphi_2(\overline{ab})=e_{14}$ is an isomorphism of $(G,*)$-algebras. 

Assume that $\overline{(ab)^*}\notin F \overline{ab}$. Define the subspace $K=F( \overline{ab} +\overline{(ab)^*})$ and notice that it is a $T_{(G,*)}$-ideal of $R/I$. If $\overline{ab}\in K$ then we get $\overline{(ab)^*}=\beta \overline{ab}$, for some $\beta \in F$, a contradiction. Therefore, we can check that $(R/I)/K$ is linearly generated by the non-zero elements $\overline{\overline{1_F}},\overline{\overline{a}},\overline{\overline{a^*}},\overline{\overline{b}},\overline{\overline{b^*}}$ and $\overline{\overline{ab}}$ where $\overline{\overline{ab}}\in (\overline{\overline{J_{11}}})_{gh}^-$. Similarly, we conclude that $(R/I)/K\cong M_{6,\omega_1}^{g,h}$.

A similar approach can be applied in the proof of item $2).$
\end{proof}

The next lemma was proved by Cota, dos Santos and Vieira in \cite{WesAnaRaf}.

\begin{lema} \label{lema5} Let $A=F+J_{11}$ be a $(G,*)$-algebra and $g\in G$.
\begin{enumerate}
    \item[1)] If $C_{k}^g\notin \textnormal{var}^{(G,*)}(A)$ then $b^{k-1}=0$, for all $b\in (J_{11})_g^+ $.

    \item[2)] If $ C_{k,*}^g\notin \textnormal{var}^{(G,*)}(A)$ then $b^{k-1}=0$, for all $b\in (J_{11})_g^-$.
\end{enumerate}
	\end{lema}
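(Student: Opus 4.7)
The plan is to prove the contrapositive in each case: starting from an element $b$ with $b^{k-1}\ne 0$, I will build an explicit quotient that is isomorphic (as a \Gstar algebra) to $C_k^g$ (respectively $C_{k,*}^g$), showing it lies in $\textnormal{var}^{(G,*)}(A)$. Since the two items are completely parallel with only the sign of the involution changing, I describe item $1)$ and indicate the adjustment for item $2)$ at the end.

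For item $1)$, assume $b\in (J_{11})_g^+$ with $b^{k-1}\ne 0$. Let $R$ be the unital $(G,*)$-subalgebra of $A$ generated by $b$. Because $b\in J_{11}$, there is a minimal integer $m\ge k$ with $b^m=0$, and $R$ is linearly spanned by $1_F,b,b^2,\ldots,b^{m-1}$; moreover $R$ is commutative, each $b^i$ is symmetric (since $(b^i)^*=(b^*)^i=b^i$), and $b^i$ is homogeneous of degree $g^i$. Next I take $I$ to be the $T_{(G,*)}$-ideal of $R$ generated by $b^k$. Since $R$ is commutative and $1_F\in R$, the ideal $I$ is just the linear span of $b^k,b^{k+1},\ldots,b^{m-1}$, so $\overline{1_F},\overline{b},\ldots,\overline{b^{k-1}}$ are linearly independent in $R/I$ and $\overline{b^{k-1}}\ne 0$.

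Now I will define $\varphi:R/I\to C_k^g$ by $\varphi(\overline{b^i})=E_1^i$ for $0\le i\le k-1$ (with $E_1^0=I_k$). By construction $\varphi$ is a well-defined linear bijection between two $k$-dimensional spaces and an algebra homomorphism since $b\mapsto E_1$. It preserves the $G$-grading because $\overline{b^i}$ and $E_1^i$ both lie in the component of degree $g^i$, and it commutes with the involutions because both $\overline{b^i}$ and $E_1^i$ are symmetric (the involution on $C_k^g$ is trivial). Hence $R/I\cong C_k^g$ as \Gstar algebras, which gives
\[
C_k^g\in \textnormal{var}^{(G,*)}(R/I)\subseteq \textnormal{var}^{(G,*)}(A),
\]
contradicting the hypothesis.

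For item $2)$, the only change is that $b\in (J_{11})_g^-$ satisfies $(b^i)^*=(-1)^i b^i$, so the involution on the generated quotient $R/I$ matches exactly the involution (\ref{invCk}) defining $C_{k,*}^g$; the same map $\overline{b^i}\mapsto E_1^i$ is then an isomorphism of \Gstar algebras onto $C_{k,*}^g$.

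The only step requiring any care is verifying that the $T_{(G,*)}$-ideal generated by $b^k$ does not contain $b^{k-1}$; this is where the commutativity of $R$ (which forces the $T_{(G,*)}$-ideal to coincide with the ordinary ideal $b^k R = \mathrm{span}\{b^k,\ldots,b^{m-1}\}$) plays the crucial role. Once this is in hand, the \Gstar isomorphism with $C_k^g$ (or $C_{k,*}^g$) is immediate from matching degrees and signs of the powers of $b$.
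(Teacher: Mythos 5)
Your argument is correct: since $b\in J_{11}$ is nilpotent (the ambient algebra being finite-dimensional throughout this section), the unital commutative subalgebra $R=\mathrm{span}\{1_F,b,\ldots,b^{m-1}\}$ modulo the ($*$-invariant, graded) ideal $\mathrm{span}\{b^k,\ldots,b^{m-1}\}$ is visibly isomorphic as a \Gstar algebra to $C_k^g$ (resp.\ $C_{k,*}^g$, the signs $(b^i)^*=(-1)^ib^i$ matching the involution in (\ref{invCk})), contradicting $C_k^g\notin \textnormal{var}^{(G,*)}(A)$. The paper itself only cites \cite{WesAnaRaf} for this lemma, but your construction is exactly the subalgebra-then-quotient technique used in the paper's own proofs of the neighboring Lemmas \ref{lema1}--\ref{lema4}, so this is essentially the intended argument.
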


Let $\mathcal{I}_7$ and $\mathcal{I}_8$ be the following sets $$\mathcal{I}_7= \mathcal{I}_6\cup \{C_{3,*}^{g},M_{10,\rho}^{g,h}\mid g,h\in G\}\,\mbox{ and }\,$$
$$\mathcal{I}_8= \mathcal{I}_6\cup\{C_{3}^{g},M_{11,\rho}^{g,h}\mid g,h\in G, g\neq 1\}.$$

\begin{lema} \label{lema4} For the $(G,*)$-algebra $A = F + J(A)$ we have,

\begin{enumerate}
    \item[1)] If $Q \not\in \textnormal{var}^{(G,*)}(A)$, for all $Q\in \mathcal{I}_7$, then $J_{01}(J_{11})_g^-=(J_{11})_g^-J_{10}=\{0\}$, for all $g\in G$.
     \item[2)] If $Q \not\in \textnormal{var}^{(G,*)}(A)$, for all $Q\in \mathcal{I}_8$, then $J_{01}(J_{11})_g^+=(J_{11})_g^+J_{10}=\{0\}$, for all $g\in G- \{1\}$.
\end{enumerate}
\end{lema}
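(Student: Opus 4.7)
I would argue by contradiction in item 1), mirroring the pattern of Lemmas \ref{lema1}--\ref{lema3}. Suppose $J_{01}(J_{11})_g^{-}\neq 0$ for some $g\in G$; pick $h\in G$, $a\in (J_{01})_h$ and $b\in (J_{11})_g^{-}$ with $ab\ne 0$. Since $(ab)^{*}=b^{*}a^{*}=-ba^{*}$, we automatically also have $(J_{11})_g^{-}J_{10}\ne 0$, so it suffices to derive a contradiction from $ab\ne 0$. Let $R$ be the $(G,*)$-subalgebra of $A$ generated by $\{1_F,a,a^{*},b\}$ with the induced grading and involution.

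Next I would accumulate the relations inside $R$. The Peirce rule $J_{ik}J_{rs}\subseteq \delta_{kr}J_{is}$ gives $a^{2}=(a^{*})^{2}=0$, $a^{*}b=0$, $ba=0$. Lemma \ref{lema1} applied through $\mathcal{I}_{1},\mathcal{I}_{2}\subset \mathcal{I}_{7}$ gives $aa^{*}=a^{*}a=0$, and Lemma \ref{lema3}(2) through $\mathcal{I}_{6}\subset \mathcal{I}_{7}$ gives $J_{01}J_{10}=0$, so by associativity $a(ba^{*})=(ab)a^{*}=0$. The extra hypothesis $C_{3,*}^{g}\notin \textnormal{var}^{(G,*)}(A)$, combined with Lemma \ref{lema5}(2), yields $b^{2}=0$, which kills all remaining products (e.g.\ $b(ba^{*})=b^{2}a^{*}=0$ and $(ab)(ba^{*})=ab^{2}a^{*}=0$). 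Consequently $R$ is spanned over $F$ by $\{1_F,a,a^{*},b,ab,ba^{*}\}$.

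The one subtlety is linear independence when $g=1$, since then both $a$ and $ab$ lie in the same graded Peirce component $(J_{01})_{h}$. If $ab=\alpha a$ for some $\alpha\in F$, then $\alpha\ne 0$ (else $ab=0$), and applying $*$ gives $ba^{*}=-\alpha a^{*}$; hence $0=b^{2}a^{*}=b(ba^{*})=-\alpha(ba^{*})=\alpha^{2}a^{*}$, forcing $a^{*}=0$, a contradiction. So $a,ab$ are linearly independent, and by the symmetric argument so are $a^{*},ba^{*}$. Since $1_F\notin J(A)$ and the remaining elements lie in pairwise distinct graded Peirce slots, these six elements form an $F$-basis of $R$.

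Finally I would define $\varphi\colon R\to M_{10,\rho}^{g,h}$ by $\varphi(1_F)=e_{11}+e_{22}+e_{55}+e_{66}$, $\varphi(b)=e_{12}-e_{56}$, $\varphi(a)=e_{45}$, $\varphi(a^{*})=e_{23}$, $\varphi(ab)=-e_{46}$, $\varphi(ba^{*})=e_{13}$, and verify that it is a $(G,*)$-isomorphism, using $e_{45}(e_{12}-e_{56})=-e_{46}$, $(e_{12}-e_{56})e_{23}=e_{13}$, the preservation of the $G$-grading ($b$ in degree $g$, $a,a^{*}$ in degree $h$, $ab,ba^{*}$ in degree $gh$) and the reflection ($e_{23}^{\rho}=e_{45}$, $(e_{12}-e_{56})^{\rho}=-(e_{12}-e_{56})$, $e_{46}^{\rho}=e_{13}$). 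This places $M_{10,\rho}^{g,h}\in\textnormal{var}^{(G,*)}(A)$, contradicting the hypothesis. Item 2) follows by the same template with $b^{*}=b$, with $b^{2}=0$ now coming from $C_{3}^{g}\notin \textnormal{var}^{(G,*)}(A)$ via Lemma \ref{lema5}(1), and with target $M_{11,\rho}^{g,h}$ and $\varphi(b)=e_{12}+e_{56}$; the standing assumption $g\ne 1$ automatically separates $(J_{01})_{h}$ from $(J_{01})_{gh}$, so the independence argument of item 1) becomes unnecessary. The main difficulty is precisely the $g=1$ case of item 1): without $b^{2}=0$ one could not exclude the degenerate collapse $ab=\alpha a$, and so would not obtain the embedding into $M_{10,\rho}^{1,h}$.
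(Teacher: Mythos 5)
Your proposal is correct and follows essentially the same route as the paper's proof: the same generating set $\{1_F,a,a^*,b\}$, the same relations obtained from the Peirce rules together with Lemmas \ref{lema1}, \ref{lema3} and \ref{lema5}, and the identical isomorphism onto $M_{10,\rho}^{g,h}$ (resp. $M_{11,\rho}^{g,h}$). The only difference is that you make explicit the linear independence of $\{1_F,a,a^*,b,ab,ba^*\}$ in the degenerate case $g=1$ of item 1), a verification the paper leaves implicit.
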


\begin{proof}
    Suppose that $J_{01}(J_{11})_g^-\ne \{0\}$ and consider $a\in (J_{01})_h$ and $b\in (J_{11})_g^-$ such that $ab\neq 0$, for some $g,h\in G$. Let $R$ be the $(G,*)$-subalgebra of $A$ generated by $1_F,a,a^*$ and $b$. By the choice of $a$ and Lemmas \ref{lema5} and \ref{lema1} we have $aa^*=a^*a=a^2=b^2=ba=a^*b=0$. Moreover, since $Q\notin \textnormal{var}^{(G,*)}(A)$, for all $Q\in \mathcal{I}_6$, by Lemma \ref{lema3} we have $aba^*=0$ and so $R$ is linearly generated by $1_F,a,a^*,b,ab$ and $ba^*$. Therefore, $R\cong M_{10,\rho}^{g,h}$ through the map $\varphi({1_F})=e_{11}+e_{22}+e_{55}+e_{66}$, 
$\varphi({a})=e_{45}$, $\varphi({a^*})=e_{23}$, $\varphi({b})=e_{12}-e_{56}$, $\varphi({ab})=-e_{46}$ and $\varphi(ba^*)=e_{13}.$

The second item is proved similarly.
\end{proof}

  Consider the following set of $(G, \ast)$-algebras $$\mathcal{I}_9=\{C_{3}^{g}, C_{3,*}^{u},U_{3}^*, \mathcal{G}_{2,\diamond}^{g,h}, W_{\nu}^{g,h},\mathcal{G}_{2,\imath}^{1,h}, W_{\eta}^{1,h} \}$$ where $g,h,u\in G$ with $g,h\neq 1$, $\diamond,\imath\in \{\psi, \tau, \gamma\}$ with $ \imath \neq \psi$ and $\nu,\eta\in \{\nu_1,\nu_2,\nu_3\}$ with $ \eta\neq \nu_1$.

	\begin{lema} \label{10}
		Let $A=F+J_{11}$ be a $(G,*)$-algebra such that $Q\notin \textnormal{var}^{(G,*)}(A)$, for all $Q\in \mathcal{I}$. Then, $[(J_{11})_1^+,(J_{11})_1^+]=(J_{11})_1^{-}(J_{11})_g=(J_{11})_g(J_{11})_1^{-}=(J_{11})_g(J_{11})_h=\{0\}$, for all $g,h\in G-\{1\}$.	\end{lema}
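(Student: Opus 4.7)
The plan is to prove each of the four claimed vanishings by contradiction, following the template used in Lemmas \ref{lema1}--\ref{lema4}: given a putative witness of nonvanishing, form the $(G,*)$-subalgebra $R$ of $A$ generated by $1_F$ together with the selected elements (and their $*$-images); then choose a $T_{(G,*)}$-ideal $I$ of $R$ so that $R/I$ is low-dimensional; and finally exhibit a $(G,*)$-isomorphism between $R/I$ and a member of $\mathcal{I}_9$, contradicting the hypothesis. We will use Lemma \ref{lema5} freely. Note that for every nontrivial $g\in G$ both $C_3^g$ and $C_{3,*}^g$ belong to $\mathcal{I}_9$, so every homogeneous element of $J_{11}$ of nontrivial degree squares to zero; similarly $C_{3,*}^1\in\mathcal{I}_9$ gives $b^2=0$ for $b\in (J_{11})_1^-$. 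On the other hand $C_3^1\notin\mathcal{I}_9$, so elements of $(J_{11})_1^+$ may have nonzero squares, which is what forces the subtler analysis in the first case.

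For $[(J_{11})_1^+,(J_{11})_1^+]=\{0\}$, suppose $ab\ne ba$ for some $a,b\in (J_{11})_1^+$. The target is $U_3^*$: inside it the symmetric elements $E$ and $e_{12}+e_{56}$ of degree $1$ fail to commute and satisfy $E^2=E(e_{12}+e_{56})+(e_{12}+e_{56})E$, with $E$ nilpotent of index $3$ and $(e_{12}+e_{56})^2=0$. Letting $I$ be the $T_{(G,*)}$-ideal of $R$ generated by $b^2$, $a^3$, $a^2-ab-ba$, $aba$, $bab$, $a^2b$ and $ba^2$, one verifies that $R/I$ is spanned by $\{\bar 1_F,\bar a,\bar b,\overline{ab},\overline{ba}\}$ and that the assignment $\bar 1_F\mapsto I_6,\ \bar a\mapsto E,\ \bar b\mapsto e_{12}+e_{56}$ extends to a $(G,*)$-isomorphism $R/I\cong U_3^*$, contradicting $U_3^*\notin \textnormal{var}^{(G,*)}(A)$.

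For the mixed cases $(J_{11})_1^-(J_{11})_g=\{0\}$ and $(J_{11})_g(J_{11})_1^-=\{0\}$ with $g\ne 1$, pick a nonzero product $ab$ with one factor skew of degree $1$ and the other homogeneous of degree $g$. Both generators of $R$ have vanishing squares, so $R$ is already spanned by at most $\{1_F,a,b,ab,ba\}$. Choosing $I$ to encode the Grassmann-type relation $ab+ba=0$ (or $ab-ba=0$, depending on how $*$ acts on the product), one checks that $R/I$ is a four-dimensional $(G,*)$-algebra isomorphic to $\mathcal{G}_{2,\gamma}^{1,g}$ when the second factor is symmetric and to $\mathcal{G}_{2,\tau}^{1,g}$ when it is skew, via $e_1\mapsto a,\ e_2\mapsto b$. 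The opposite-order case $(J_{11})_g(J_{11})_1^-$ is handled by the same template with the roles of $e_1$ and $e_2$ exchanged, or by applying $*$ to the entire construction and observing that $(ba)^*=a^*b^*$ returns us to the previous setup.

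For $(J_{11})_g(J_{11})_h=\{0\}$ with $g,h\in G-\{1\}$, we split on the symmetry types of $a\in (J_{11})_g$ and $b\in (J_{11})_h$, and on whether the nonzero product $ab$ behaves in the quotient as in the anticommuting Grassmann algebra $\mathcal{G}_2$ (so that $\overline{ab}+\overline{ba}=0$) or in the commutative algebra $W$ (so that $\overline{ab}-\overline{ba}=0$); the corresponding targets are $\mathcal{G}_{2,\diamond}^{g,h}$ or $W_{\nu}^{g,h}$ for the appropriate $\diamond\in\{\psi,\tau,\gamma\}$ and $\nu\in\{\nu_1,\nu_2,\nu_3\}$, all of which belong to $\mathcal{I}_9$ because $g,h\ne 1$. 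The main obstacle, echoing the proof of Lemma \ref{lema3}, is verifying that the chosen ideal $I$ does not collapse basis elements that must remain distinct in the target; this is resolved by a case analysis on the scalar $\alpha$ in $\overline{(ab)^*}=\alpha\overline{ab}$, invoking Remark \ref{obs1} to reduce to $\alpha=\pm 1$, and introducing a further quotient by the ideal $F(\overline{ab}+\overline{(ab)^*})$ in the remaining subcase to force the precise structure of the target $(G,*)$-algebra.
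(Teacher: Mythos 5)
Your overall template --- realize a witness of non-vanishing inside a small quotient $R/I$ of the subalgebra it generates and map that quotient isomorphically onto a member of $\mathcal{I}_9$ --- is the same one the paper uses, and for the products $(J_{11})_g(J_{11})_h$ with $g,h\neq 1$ your outline (split on symmetry types, use Remark \ref{obs1} to reduce to $\overline{ba}=\pm\overline{ab}$ when $\overline{ba}\in F\overline{ab}$, and otherwise pass to the further quotient by $F(\overline{ab}+\overline{(ab)^*})$) matches the paper's Cases 1 and 2 essentially verbatim.

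However, your treatment of $[(J_{11})_1^+,(J_{11})_1^+]=\{0\}$ has a genuine gap. The paper does not attempt a direct construction here: it notes that $F+(J_{11})_1$ is a $*$-algebra with trivial grading and invokes \cite[Lemma 4.7]{MN}, which says that $U_3^*\notin \textnormal{var}^{*}(F+J)$ forces the symmetric elements of $J_{11}$ to commute. Your construction imposes $a^2-ab-ba\in I$, but $a^2$ is whatever it happens to be in $A$, and nothing in the hypotheses controls it: $C_3^1$ does \emph{not} belong to $\mathcal{I}$, so Lemma \ref{lema5} gives no information about powers of elements of $(J_{11})_1^+$. If, say, $a^2=0$ in $A$, then $\overline{ab}+\overline{ba}=0$ in $R/I$, your five spanning elements become dependent, and the assignment $\bar a\mapsto E$ cannot even be a homomorphism onto $U_3^*$, since $E^2=e_{13}+e_{46}\neq 0$; the quotient is then a Grassmann-type algebra on two symmetric generators of trivial degree, which is not in $\mathcal{I}$, so no contradiction is reached without a separate (and nontrivial) proof that $\textnormal{Id}^{(G,*)}(R/I)\subseteq \textnormal{Id}^{(G,*)}(U_3^*)$. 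A smaller issue of the same kind occurs in your mixed cases $(J_{11})_1^-(J_{11})_g$: you name only Grassmann targets, but if $ab=ba$ holds exactly in $A$ then the ideal generated by $ab+ba$ contains $ab$ and kills the product entirely; the correct targets in that subcase are $W_{\nu_2}^{1,g}$ and $W_{\nu_3}^{1,g}$ (which is precisely why the paper's Case 3 lists them), reached without quotienting after first splitting on whether $\overline{ba}\in F\overline{ab}$, exactly as in the other cases.
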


\begin{proof}

Since $F+(J_{11})_1$ is a $(G,*)$-subalgebra of $A$ with trivial $G$-grading and $U_{3}^*\notin \textnormal{var}^{(G,*)}(F+(J_{11})_1)$ then by \cite[Lemma 4.7]{MN}  we have $[(J_{11})_1^+,(J_{11})_1^+]=\{0\}.$

	Suppose that there exist symmetric or skew elements $a\in (J_{11})_g$ and $b\in (J_{11})_h$ such that $ab\neq 0$, for some $g,h\in G$ with $a,b\notin (J_{11})_1^+$. Let $R$ be the $(G,*)$-subalgebra of $A$ generated by $1_F,a$ and $b$ with induced $G$-grading and induced involution. Since $C_{3}^{r}, C_{3,*}^{s}\notin \textnormal{var}^{(G,*)}(A),$ for all $r,s\in G$ with $r\neq 1$, then by Lemma \ref{lema5} we have $a^2=b^2=0$ for any choice of $a$ and $b$.

 {\bf Case 1:} $g=h$ with $g\neq 1$.

 If both $a$ and $b$ are symmetric or both are skew we also have $(a+b)^2=0$ and so $ab=-ba$. Therefore, the map $\varphi_1: R \rightarrow \mathcal{G}_2$ given by $\varphi_1({1_F})= 1, \varphi_1({a})= e_1, \varphi_1({b})=e_2$ and $\varphi_1({ab})=e_1e_2$ defines an isomorphism of $(G,*)$-algebras in the following cases: 

\begin{enumerate}
			\item[1.] if $a\in (J_{11})_g^+$, $b\in (J_{11})_g^+$ then $R\cong \mathcal{G}_{2,\psi}^{g,g};$ 
			\item[2.] if $a\in (J_{11})_g^-$, $b\in (J_{11})_g^-$ then $R\cong \mathcal{G}_{2,\tau}^{g,g}.$
		\end{enumerate}

  Therefore, we assume that $a$ is skew and $b$ is symmetric. If $ba\in Fab$, by Remark \ref{obs1} either $ba=-ab$ or $ba=ab$. In the first case, the map $\varphi_1: R\rightarrow \mathcal{G}_2$ defined above is an isomorphism of $(G,*)$-algebras between $R$ and $\mathcal{G}_{2,\gamma}^{g,g}.$ In the second case, the map $\varphi_2: R \rightarrow W$ given by $\varphi_2({1_F})= e_{11}+\cdots + e_{44}, \varphi_2({b})=e_{12}+e_{34}, \varphi_2({a})=e_{13}+e_{24}$ and $\varphi_2({ab})=e_{14}$ defines an isomorphism of $(G,*)$-algebras between $R$ and $W_{\nu_3}^{g,g}$.

  Assume that $ba\notin Fab$ and consider $I=\langle ab+ba \rangle_{T_{(G,*)}}$ the $T_{(G,*)}$-ideal of $R$ generated by $ab+ba$. Since $C_{3,*}^{g}\notin \textnormal{var}^{(G,*)}(A)$ then $I$ is linearly generated by $ab+ba, aba, bab$ and $abab$, and so $a,b$ and $ab$ do not belong to $I$. Therefore, $R/I$ is a $(G,*)$-algebra linearly generated by $\overline{1_F},\overline{a}, \overline{b}, \overline{ab}$ satisfying $\overline{a}^2=\overline{b}^2=\overline{ab}+\overline{ba}=0$. Hence $R/I\cong \mathcal{G}_{2, \gamma}^{g,g}.$

 {\bf Case 2:} $g\neq h$ with $g,h\neq 1$.

If $ba\in Fab$, then by Remark \ref{obs1} either $ab=ba$ or $ab=-ba$. In the first case, according to the choice of $a$ and $b$, the map $\varphi_2: R \rightarrow W$ defined above defines an isomorphism of $(G,*)$-algebras between $R$ and $W_{*}^{g,h}$, for some $*\in \{\nu_1,\nu_2,\nu_3\}$. In the second case, according to the choice of $a$ and $b$, the map $\varphi_1: R \rightarrow \mathcal{G}_2$ defined above defines an isomorphism of $(G,*)$-algebras between $R$ and $\mathcal{G}_{2,*}^{g,h}$, for some $*\in \{\psi, \tau, \gamma\}$.

  Therefore, we may assume that $ba\notin Fab$. In this case, we consider $I$ the $T_{(G,*)}$-ideal of $R$ generated by $ab+ba$.
 A similar approach proves that $a,b,ab\notin I$ and so $R/I$ is linearly generated by $\overline{1_F}$, $\overline{a}$, $\overline{b}$ and $\overline{ab}$ where $\overline{a}^2=\overline{b}^2= \overline{ab}+\overline{ba}=0$. Now, according to the choice of $a$ and $b$, it is easy to check that $R/I \cong \mathcal{G}_{2,*}^{g,h}$, for some $*\in \{\psi, \tau, \gamma\}.$ 
 
  {\bf Case 3:} $a\in (J_{11})_1^{-}$ and $h\neq 1.$

Following the steps of the proof of Case $2$ we may check that if $C_{3,*}^{1}, \mathcal{G}_{2,\tau}^{1,h}, \mathcal{G}_{2,\gamma}^{1,h}, W_{\nu_2}^{1,h}, W_{\nu_3}^{1,h}\notin \textnormal{var}^{(G,*)}(A)$ then we have $(J_{11})_1^{-}(J_{11})_h=(J_{11})_h(J_{11})_1^{-}=\{0\}.$ 

 \end{proof}

Let $\mathcal{I}$ be the set of all the previous algebras, that is, $$\mathcal{I}= \mathcal{I}_5\cup \mathcal{I}_7\cup \mathcal{I}_8\cup \mathcal{I}_9.$$

\begin{corolario} \label{cor1}
   Let $A=F+J(A)$ be a $(G,*)$-algebra such that $Q\notin \textnormal{var}^{(G,*)}(A)$, for all $Q\in \mathcal{I}$, then $[y_{1,1},y_{2,1}]$, $x_{1,g}x_{2,h},z_{1,1}x_{2,g}\in \textnormal{Id}^{(G,*)}(A)$, for all $g,h\in G- \{1\}$.
\end{corolario}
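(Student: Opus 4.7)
The plan is to verify each of the three $(G,*)$-identities directly by substituting arbitrary admissible elements of $A$, exploiting the Peirce decomposition \eqref{6} of $J(A)$ together with the vanishing results accumulated in Lemmas \ref{lema1}--\ref{lema4} and \ref{10}. Any element of $A_r$ splits into its four Peirce components in $(J_{00})_r$, $(J_{10})_r$, $(J_{01})_r$, $(J_{11})_r$, plus (only when $r=1$) a scalar multiple of $1_F$; since $J_{10}^{*}=J_{01}$ while $J_{00}$ and $J_{11}$ are $*$-invariant, every symmetric (resp.\ skew) element of $(J_{10}+J_{01})_r$ has the form $k+k^{*}$ (resp.\ $k-k^{*}$) for some $k\in(J_{10})_r$.

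For $x_{1,g}x_{2,h}$ with $g,h\in G\setminus\{1\}$, I would substitute $u\in A_g$ and $v\in A_h$ (neither carries an $F$-component since $g,h\neq 1$) and expand $uv$ into the $4\times 4$ array of Peirce products $u_{ik}v_{rs}$. The rule $J_{ik}J_{rs}\subset\delta_{kr}J_{is}$ instantly kills the non-composable pairings; among the composable ones, Lemma \ref{lema2} handles $J_{10}\cdot J_{00}$ and $J_{00}\cdot J_{01}$, Lemma \ref{lema3} handles $J_{10}\cdot J_{01}$ and $J_{01}\cdot J_{10}$, Lemma \ref{lema4} handles $J_{01}\cdot J_{11}$ and $J_{11}\cdot J_{10}$ in the relevant signs and degrees, and Lemma \ref{10} handles $(J_{11})_g(J_{11})_h$. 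The identity $z_{1,1}x_{2,g}$ is proved in exactly the same way: $1_F^{*}=1_F$ forces the skew substitute of $z_{1,1}$ to lie entirely in $J(A)$, so again no $F$-component appears and the same case-by-case argument closes the proof.

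The identity $[y_{1,1},y_{2,1}]$ is the most delicate, because $u,v\in A_1^+$ now carry genuine $F$-components $\alpha\cdot 1_F$ and $\beta\cdot 1_F$. I would expand $uv-vu$ completely: cross products between different Peirce blocks vanish exactly as in the previous step; the $1_F$-cross terms with the $J_{00}$- and $J_{11}$-parts cancel, because $1_F$ is the two-sided unit on $J_{11}$ and annihilates $J_{00}$; and $[(J_{11})_1^+,(J_{11})_1^+]=0$ is precisely the first conclusion of Lemma \ref{10}.

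The main obstacle, and the point on which the proof really hinges, is the treatment of the pieces not directly controlled by the stated lemmas: (i) the internal products $(J_{00})_g(J_{00})_h$ for $g,h\neq 1$ and the commutator $[(J_{00})_1^+,(J_{00})_1^+]$, invisible to Lemmas \ref{lema2}--\ref{10} because $J_{00}$ is the Peirce block on which $1_F$ acts as zero; and (ii) the residual ``boundary'' terms of type $\alpha(\ell-\ell^{*})-\beta(k-k^{*})$ produced in $[y_{1,1},y_{2,1}]$ by the failure of $1_F$ to commute with elements of $(J_{10})_1+(J_{01})_1$. Both should be handled by re-running the subalgebra/quotient-embedding arguments of Lemmas \ref{lema1}--\ref{10}: any non-vanishing instance would let one exhibit a copy of some $Q\in\mathcal{I}$ inside a suitable $(G,*)$-subalgebra of $A$ (for (i), inside $F+J_{00}$, and for (ii), inside the subalgebra generated by $1_F$ together with the offending elements of $J_{10}+J_{01}$), contradicting the standing hypothesis that $Q\notin\textnormal{var}^{(G,*)}(A)$ for all $Q\in\mathcal{I}$.
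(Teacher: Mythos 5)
The paper offers no proof of this corollary at all: it is stated as an immediate consequence of Lemmas \ref{lema1}--\ref{10}. Your Peirce-by-Peirce bookkeeping is therefore already more explicit than the paper, and your assignment of each composable product to the lemma that kills it is correct. You have also correctly isolated exactly the terms that the lemmas do \emph{not} control: (i) products internal to $J_{00}$, and (ii) the terms produced by the one-sided action of $1_F$ (and, you should add, of $(J_{11})_1^+$, since Lemma \ref{lema4}(2) only covers $(J_{11})_g^+J_{10}$ for $g\neq 1$) on $(J_{10})_1+(J_{01})_1$.

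The gap is your final step: the claim that any non-vanishing instance of these residual terms would exhibit a copy of some $Q\in\mathcal{I}$ in $\textnormal{var}^{(G,*)}(A)$ cannot be made to work. Every algebra in $\mathcal{I}$ has quadratic codimension growth (the paper says so explicitly after Lemma \ref{306}), while the offending configurations occur in algebras of at most linear growth. Concretely, for (ii) take $A=A_2^{\ast}$, the span of $e_{11}+e_{44}$, $e_{12}$, $e_{34}$ with reflection involution and trivial grading: here $k=e_{12}\in (J_{10})_1$, $[1_F,\,k+k^{\ast}]=e_{12}-e_{34}\neq 0$, so $[y_{1,1},y_{2,1}]\notin \textnormal{Id}^{(G,*)}(A_2^{\ast})$, yet $c_n^{(G,*)}(A_2^{\ast})=4n-1$ forces $Q\notin\textnormal{var}^{(G,*)}(A_2^{\ast})$ for every quadratic-growth $Q$, so the hypothesis of the corollary is satisfied. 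For (i), if $N$ is a nilpotent graded $*$-algebra with $N_gN_h\neq 0$ and $A=F1_F\oplus N$ with $1_FN=N1_F=0$ (so $N=J_{00}$), the subalgebra generated by the offending elements is nilpotent and annihilated by $1_F$, and no member of $\mathcal{I}$ --- each of which is either unital or has its idempotent acting nontrivially on the radical --- can lie in its variety. So these cases are not merely unhandled by your argument: the statement as written fails on them, and no completion of your step can close it. Any repair must change the statement, e.g.\ by first splitting $J_{00}$ off as a nilpotent direct summand (which Lemma \ref{lema2} together with the Peirce rules makes possible) and by weakening $[y_{1,1},y_{2,1}]$ to an identity that $A_2^{\ast}$ actually satisfies; this also affects the reduction step in the proof of Lemma \ref{306}, which relies on the corollary as stated.
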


\begin{lema} \label{306}
		Let $A$ be a $(G,*)$-algebra under the hypothesis of Corollary \ref{cor1}. Then, $$A\sim_{T_{(G,*)}} \displaystyle {\bigoplus_{g\in G} }B^g,$$ where $B^g=F +(J_{11})_1^+ +J(A)_g$. Moreover, $B^h\in \textnormal{var}^{(G,*)}(M_{h,\rho})$, for all $h\in G-\{1\}$.
	\end{lema}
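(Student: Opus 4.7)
The plan is to establish the $T_{(G,\ast)}$-equivalence by showing two inclusions of identity ideals, and then verify $B^h\in\varGstar(M_{h,\rho})$ by checking the generators of $\IdGstar{M_{h,\rho}}$ listed in Example \ref{43}.

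First, I would verify that each $B^g$ is a $(G,\ast)$-subalgebra of $A$. For $g=1$ this is immediate since $B^1=F+J(A)_1=A_1$. For $g\neq 1$, closure of $B^g=F+(J_{11})_1^++J(A)_g$ under multiplication combines three inputs: the Peirce-type block structure (\ref{6}) (which kills products like $J_{11}\cdot J_{00}$ and $J_{11}\cdot J_{01}$), Lemma \ref{10} (which makes $(J_{11})_1^+$ commutative and closed, so its products remain symmetric), and the identity $x_{1,g}x_{2,g}\in\IdGstar{A}$ from Corollary \ref{cor1}, which gives $J(A)_g\cdot J(A)_g=0$. Invariance under the involution and the $G$-grading is automatic because each summand is invariant. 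Since every $B^g$ is a $(G,\ast)$-subalgebra of $A$, we immediately obtain $\IdGstar{A}\subseteq\bigcap_{g\in G}\IdGstar{B^g}=\IdGstar{\bigoplus_{g\in G}B^g}$.

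For the reverse inclusion I would take a multilinear $(G,\ast)$-polynomial $f\in\bigcap_g\IdGstar{B^g}$ and evaluate it on arbitrary elements $a_1,\dots,a_n\in A$ of the appropriate types. Using the $G$-grading together with the block decomposition (\ref{6}) and the symmetric/skew splitting of each $J_{ij}$, decompose each $a_i$ into atomic pieces lying in the subspaces $F$ and $(J_{ij})_g^\pm$. By multilinearity, $f(a_1,\dots,a_n)$ becomes a sum of evaluations on such atomic pieces, and the key claim is that every monomial which contributes non-trivially uses pieces that all lie in a single $B^g$: the identity $x_{1,g}x_{2,h}$ of Corollary \ref{cor1} forces at most one factor in a non-zero monomial to be non-trivially graded; when such a factor lies in $(J_{11})_g^\pm$ (the only possibility inside the body of a non-zero monomial of the schematic form $[J_{01}]?(F\cup J_{11})^\ast[J_{10}]?$), Lemma \ref{lema4} forbids coexistence with any $J_{01}$ or $J_{10}$ factor, since the vanishing propagates through the intervening $F$'s and $(J_{11})_1^+$'s, while Lemma \ref{10} forbids an adjacent $(J_{11})_1^-$. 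What survives has every factor in $F\cup(J_{11})_1^+\cup J(A)_g\subseteq B^g$, so invoking $f\in\IdGstar{B^g}$ gives vanishing.

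The second statement is then a direct check. By Example \ref{43} item $2$, $\IdGstar{M_{h,\rho}}=\langle z_{1,1},\,x_{1,h}x_{2,h},\,x_{1,r}\mid r\in G-\{1,h\}\rangle_{T_{(G,\ast)}}$, and each generator is satisfied by $B^h=F+(J_{11})_1^++J(A)_h$: the $1$-graded part $F+(J_{11})_1^+$ is purely symmetric, so $z_{1,1}$ always evaluates to zero; the identity $x_{1,h}x_{2,h}$ is a direct specialization of the identity of Corollary \ref{cor1}; and $B^h$ contains no elements of grading $r\in G-\{1,h\}$, so $x_{1,r}$ is trivially an identity. Hence $\IdGstar{M_{h,\rho}}\subseteq\IdGstar{B^h}$, proving $B^h\in\varGstar(M_{h,\rho})$. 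The main obstacle throughout is the combinatorial case analysis for the reverse identity inclusion: tracing which mixed monomials in the atomic blocks $(J_{ij})_g^\pm$ can contribute, and invoking Lemmas \ref{lema2}--\ref{lema4} and Lemma \ref{10} in the correct sequence to force every non-zero contribution to lie within a single $B^g$.
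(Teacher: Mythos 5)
Your proposal follows essentially the same route as the paper: each $B^g$ is a $(G,\ast)$-subalgebra (giving one inclusion of $T_{(G,\ast)}$-ideals), a multilinear identity of $\bigoplus_{g\in G}B^g$ is shown to vanish on $A$ because every surviving monomial evaluation lies in a single $B^g$ (the paper runs this on the polynomial side, reducing $f$ modulo $\langle [y_{1,1},y_{2,1}],\, x_{1,g}x_{2,h},\, z_{1,1}x_{2,g}\rangle_{T_{(G,\ast)}}$ to monomials with at most one non-trivially graded variable, while you run it on the evaluation side via atomic pieces), and $B^h\in\varGstar(M_{h,\rho})$ is read off from Example \ref{43}. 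One caveat: your schematic form $[J_{01}]?(F\cup J_{11})^{\ast}[J_{10}]?$ for a non-vanishing monomial omits monomials all of whose factors lie in $J_{00}$, where a factor from $(J_{00})_1^{\pm}$ could a priori multiply a factor from $(J_{00})_g^{\pm}$ non-trivially even though neither lies in a common $B^g$, and none of Lemmas \ref{lema1}--\ref{10} addresses such cross-degree products inside $J_{00}$; the paper's proof passes over this case equally silently (it simply asserts that type-2 monomials ``must be evaluated on $B^g$''), so your argument is no weaker than the one in the paper, but this is the point both treatments leave unjustified.
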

	
	\begin{proof}
	First, by Corollary \ref{cor1}, we notice that $B^g$ is a $(G,*)$-subalgebra of $A$, for all $g\in G$. Hence, $\textnormal{Id}^{(G,*)}(A)\subset \textnormal{Id}^{(G,*)}(B)$, where $B=\displaystyle {\bigoplus_{g\in G} }B^g $. Let $f$ be a multilinear $(G,*)$-identity of $B$. After reducing $f$ modulo the $T_{(G,*)}$-ideal
 $$\langle [y_{1,1},y_{2,1}], x_{1,g}x_{2,h},z_{1,1}x_{2,g}\mid g,h\in G- \{1\}\rangle_{T_{(G,*)}} $$ and using the multihomogeneity of $T_{(G,*)}$-ideals, we may assume that $f$ is a linear combination of polynomials of one of the following types
		\begin{enumerate}
		\item[1.] $x_{\sigma(1),1}\cdots x_{\sigma(n),1}$;
	
		\item[2.] $y_{i_1,1}\cdots y_{i_{j-1},1}x_{j,g} y_{i_{j+1},1}\cdots y_{i_n,1}$, $1\leq j\leq n$;
  
	\end{enumerate} 
  for some $\sigma \in S_n$, $x_i\in\{y_i,z_i\}$, $g\in G-\{1\}$ and $i_1< \cdots < i_{j-1}$, $i_{j+1}< \cdots < i_{n}$.
		
		If $f$ is a linear combination of polynomials of the first type then we must evaluate $f$ on elements from the $(G,*)$-subalgebra $B^1$ of $A$ and so $f\equiv 0$ on $A$.	In the second case, we must evaluate $f$ on the $(G,*)$-subalgebra $B^g$ of $A$. Since $f$ is a $(G,*)$-identity of $B^h$, for all $h\in G$, we also have $f\equiv 0$ on $A$ and so $\IdGstar{A}= \IdGstar{B}$.

  Finally, by Example \ref{43}, it is immediate that $B^h\in \textnormal{var}^{(G,*)}(M_{h,\rho})$, for all $h\in G-\{1\}$.
	\end{proof}

 The previous lemma will be crucial in order to get our classification, in fact the previous decomposition seems to be much more comprehensible. Moreover, we notice that $\mathcal{I}$ consists entirely of $(G,*)$-algebras having quadratic growth.

\section{\texorpdfstring{Classifying the minimal $(G,*)$-varieties of quadratic growth}{Classifying the minimal (G,*)-varieties of quadratic growth}}

The main purpose of this section is to classify the minimal $(G,*)$-varieties having quadratic growth of corresponding $(G,*)$-codimension sequence. We present a finite list of $(G,*)$-algebras generating all such $(G,*)$-varieties. 

Recall that a $(G,*)$-variety $\mathcal{V}=\textnormal{var}^{(G,*)}(A)$ is minimal of polynomial growth $n^k$ if $c_n^{(G,*)}(A)\approx \alpha n^k$ and any proper subvariety $\mathcal{U}$ of $\mathcal{V}$ has polynomial growth $ n^p$,  for some $p<k$ and $\alpha > 0$. By convenience, we also say that the $(G,*)$-algebra $A$ is minimal. In \cite{Mara}, de Oliveira, dos Santos and Vieira classified the minimal $(G,*)$-varieties of linear growth and proved that the direct sum of distinct $(G,*)$-algebras generating such varieties generates all $(G,*)$-varieties of linear growth.

\begin{teorema} \label{minimaislinear}  A $(G,*)$-algebra $A$ generates a minimal $(G,*)$-variety of linear growth if and only if $A$ is ${T_{(G,*)}}$-equivalent to one of the following $(G,*)$-algebras: $C_{2,*}^g, A_2^*, C_2^h, A_2^\mathsf{h},$ for some $g,h\in G$ with $h\neq 1.$
\end{teorema}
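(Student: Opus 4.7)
The plan is to prove both directions of the biconditional. For the forward (``if'') direction, we verify that each of the four listed algebras has linear codimension growth and generates a minimal variety. The $T_{(G,*)}$-ideals of $C_{2,*}^g$, $A_2^*$, $C_2^h$ and $A_2^{\mathsf{h}}$ are explicitly available (see Example \ref{43} and \cite{Mara}), and a direct count of multilinear polynomials modulo these ideals yields $c_n^{(G,*)} \approx \alpha n$ for appropriate $\alpha > 0$. Minimality is checked by inspecting the $\langle n \rangle$-cocharacter of each algebra: the only non-zero multiplicities occur on the partition $(n)$ (contributing constants) and on a single $(n-1,1)$-shape in one fixed homogeneous component (the linear term). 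Any proper $T_{(G,*)}$-subvariety must annihilate this $(n-1,1)$ contribution, leaving only constant growth.

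For the reverse (``only if'') direction, let $\mathcal{V} = \textnormal{var}^{(G,*)}(A)$ be minimal with $c_n^{(G,*)}(A) \approx \alpha n$. Since linear growth is polynomial, Theorem \ref{polynomialgrowth} decomposes $A \sim_{T_{(G,*)}} B_1 \oplus \cdots \oplus B_m$ with $\dim_F B_i/J(B_i) \leq 1$ for each $i$. Nilpotent summands only shift codimensions by a bounded amount, and minimality rules out two simultaneous non-nilpotent summands of linear growth, since discarding one would produce a proper subvariety of the same growth. Hence we reduce to the case $A = B = F + J(B)$, a single unital algebra.

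The core technical step, which I expect to be the main obstacle, is showing $J(B)^2 = 0$. Since $\mathcal{V}$ has linear growth, none of the quadratic-growth algebras from Corollary \ref{corolarioquad} nor any member of the set $\mathcal{I}$ from Section \ref{Sec3} can lie in $\mathcal{V}$. Consequently Lemmas \ref{lema1}, \ref{lema2}, \ref{lema3}, \ref{lema4}, and \ref{10} all apply to $B$, and their combined vanishing conclusions force every potential non-zero product in $J$ to vanish: any surviving product $ab \neq 0$ with $a, b \in J$ would, by the explicit subalgebra constructions in the proofs of those lemmas, embed one of the quadratic-growth algebras $M_{i,\rho}$, $C_{3,*}^{\cdot}$, $\mathcal{G}_{2,\cdot}^{\cdot,\cdot}$ or $W_{\nu}^{\cdot,\cdot}$ into $\mathcal{V}$, contradicting linear growth.

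Once $J(B)^2 = 0$ is established, $B = F + J$ is completely determined by $J$ as an $F$-bimodule via the Peirce decomposition (\ref{6}), refined by the $G$-grading and the $\pm$-involution. Each non-zero homogeneous $\ast$-orbit in $J$ contributes exactly one linearly independent highest-weight vector of shape $(n-1,1)$ and hence $\approx n$ codimensions. Minimality forces exactly one such orbit to survive, since otherwise the quotient by one of them would give a proper subvariety retaining linear growth. Case analysis of the four possible orbit types yields the four listed algebras: a skew element in $(J_{11})_g$ gives $C_{2,*}^g$; a symmetric element in $(J_{11})_h$ with $h \neq 1$ gives $C_2^h$ (the case $h = 1$ being excluded since $(J_{11})_1^+$ alone yields only constant growth); an involution-paired orbit in $(J_{10} \oplus J_{01})_1$ gives $A_2^*$; and such an orbit in $(J_{10} \oplus J_{01})_h$ with $h \neq 1$ gives $A_2^{\mathsf{h}}$.
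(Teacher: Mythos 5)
First, a point of reference: the paper does not prove Theorem \ref{minimaislinear} at all; it is imported verbatim from \cite{Mara}, so your attempt can only be measured against the machinery the paper builds around it (Lemmas \ref{lema1}--\ref{306} and Theorem \ref{31}). Your forward direction (computing codimensions from the known $T_{(G,*)}$-ideals and checking that every nonconstant piece of the cocharacter dies in any proper subvariety) is the standard and essentially correct approach, modulo the care needed to show that killing the $((n)_{1^+})$ component also kills the degree-$n$ components.

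The reverse direction, however, rests on a claim that is false: that linear growth of $B=F+J(B)$ forces $J(B)^2=0$, because ``any surviving product $ab\neq 0$ with $a,b\in J$ would embed one of the quadratic-growth algebras.'' This fails already for $a=b=u\in (J_{11})_1^+$ with $u^2\neq 0$: the subalgebra $F+Fu+Fu^2$ is a commutative algebra with trivial grading and trivial involution, hence of \emph{constant} growth, and no member of $\mathcal{I}$ or of Corollary \ref{corolarioquad} is embedded. Similarly, a product $(J_{11})_1^+\cdot (J_{11})_h\neq 0$ is compatible with linear growth: the algebra $F1+Fu+Fv+Fuv$ with $u\in (J_{11})_1^+$, $v\in (J_{11})_h^+$, $u^2=v^2=0$, $uv=vu\neq 0$ has $J^2\neq 0$ yet is $T_{(G,*)}$-equivalent to $C_2^h$ and so has linear growth. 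This is exactly why the paper's own lemmas stop short of your claim: Lemma \ref{10} concludes only $[(J_{11})_1^+,(J_{11})_1^+]=\{0\}$ (not vanishing of the product), Corollary \ref{cor1} does not put $y_{1,1}x_{2,g}$ into $\textnormal{Id}^{(G,*)}(A)$, and Lemma \ref{306} deliberately retains $(J_{11})_1^+$ inside every graded summand $B^g=F+(J_{11})_1^+ +J(A)_g$. The workable route is therefore not ``prove $J^2=0$'' but: exclude the quadratic-growth algebras, obtain the partial vanishing statements of Lemmas \ref{lema1}--\ref{10}, apply the decomposition $A\sim_{T_{(G,*)}}\bigoplus_g B^g$ of Lemma \ref{306}, and then invoke the known classifications of the subvarieties of $M_{1,\rho}$ (from \cite{lamattinamisso}) and of $M_{g,\rho}$ (Theorem \ref{31}), isolating the summands of linear growth via $C_{2,*}^h\sim_{T_{(G,*)}}N_2^{\mathsf h}$ and $C_2^h\sim_{T_{(G,*)}}U_2^{\mathsf h}$. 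Your reduction to a single summand also has the logic inverted: the correct argument is that some summand $B_i$ must itself have linear growth (since $c_n^{(G,*)}$ of a direct sum is bounded by the sum of the $c_n^{(G,*)}(B_i)$), whence minimality forces $\textnormal{var}^{(G,*)}(A)=\textnormal{var}^{(G,*)}(B_i)$; discarding a summand need not yield a \emph{proper} subvariety.
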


Before we present our main result, we consider the following remark.

\begin{observacao} \label{12}
		For each $g\in G- \{1\}$ we have
		\begin{enumerate}
						\item[1)] $C_{m,*}^1\in \textnormal{var}^{(G,*)}((FC_2)_{*}),$ for all $m\geq 2.$
			\item[2)] If $|G|$ is even and $h\in G$ with $|h|=2$ then  $C_{m,*}^h\in \textnormal{var}^{(G,*)}((FC_2)^{(G,*)}),$ for all $m\geq 2$.
			\item[3)] For all prime $p\mid  |G|$, $h\in G$ with $|h|=p$ and $C_p=\langle h\rangle$ we have $C_{m}^{h}\in \textnormal{var}^{(G,*)}(FC_p^G),$ for all $m\geq 2$.

   	\item[4)] $N_{3}^*$, $U_{3}^* \in \textnormal{var}^{(G,*)}(M_{1,\rho})$ and $N_{3}^{\mathsf{g}}$, $U_{3}^{\mathsf{g}}$, $A_{2}^{\mathsf{g}} \in \textnormal{var}^{(G,*)}(M_{g,\rho}).$
		\end{enumerate}
	\end{observacao}

We are interested on defining a minimal list of $(G,*)$-algebras of quadratic growth, in a certain sense. The next remark can be consulted in \cite[Lemma 4.10]{mallu}.

\begin{observacao}For the algebras $M_{6,\omega_1}^{1,1}$ and $U_3^*$
we have $\textnormal{Id}^{(G,*)}(M_{6,\omega_1}^{1,1})\subset \textnormal{Id}^{(G,*)}(U_3^*).$ Therefore, if $U_3^*\notin \textnormal{var}^{(G,*)}(A)$ then $M_{6,\omega_1}^{1,1} \notin \textnormal{var}^{(G,*)}(A).$ 
\end{observacao}

Define the following sets $$\mathcal{K}=\{\mathcal{G}_{2,\tau}^{1,1},C_{3,*}^1,N_{3}^*,U_3^*, M_{4, \rho}^{1},M_{5, \rho}^{1},M_{6,\omega_1}^{1,1},M_{7,\omega_2}^{1,1}, M_{j, \rho}^{1,1}\mid 8\leq j\leq 10\},$$
$$\mathcal{K}_1=\mathcal{K}- \{M_{6,\omega_1}^{1,1}\} \,\mbox{ and }\, \mathcal{I}_{10}=\{A_3^{\mathsf{g}},N_3^{\mathsf{g}}, U_3^{\mathsf{g}}\mid g\in G- \{1\}\}$$
and recall that $\mathcal{I}= \mathcal{I}_5\cup \mathcal{I}_7\cup \mathcal{I}_8\cup \mathcal{I}_9.$
Finally, we define the set of $(G,*)$-algebras
$$\mathcal{M}=\mathcal{I}\cup \mathcal{K}_1\cup \mathcal{I}_{10}.$$

 The final goal of this paper is to prove that the $(G,\ast)$-algebras in $\mathcal{M}$ generate the only minimal varieties of quadratic growth of the $(G,\ast)$-codimensions. In fact, by Lemmas \ref{teoAk*} -- \ref{ultimolema} and Corollary \ref{corolarioquad}, for all $A\in \mathcal{M}$ we have $c_n^{(G,*)}(A)\approx qn^2$, for some constant $q>0.$ Also, we observe the following.

\begin{observacao}\label{min} 
Some long and arduous calculations prove that for distinct $(G,\ast)$-algebras $A$ and $B$ in $\mathcal{M},$ we have $\textnormal{Id}^{(G,*)}(A)\not\subset \textnormal{Id}^{(G,*)}(B)$. 
\end{observacao}

Finally, we are ready to prove the main result of this section.
 
\begin{teorema} \label{1234}
 Let $A$ be a finite-dimensional $(G,*)$-algebra over a field $F$ of characteristic zero. The following conditions are equivalent:
 \begin{enumerate}
    
     \item[1)] $Q\notin \textnormal{var}^{(G,*)}(A)$, for all $Q\in  \mathcal{M}$.
     \vspace{0.1 cm}
     
     \item[2)] $A$ is $T_{(G,*)}$-equivalent to $M^{g_1} \oplus \cdots \oplus  M^{g_t}\oplus N$, where for $g_1=1$ we have  $M^{g_1}$ is $T_{(G,*)}$-equivalent to one of the following $(G,*)$-algebras:

     \begin{center}
$N$,  $C\oplus N$,  $C_{2,*}^{1}\oplus N$,  $A_{2}^{\ast}\oplus N$,  $C_{2,*}^{1}\oplus A_{2}^{\ast}\oplus N,$
\end{center} and for all $2\leq i\leq t$ we have $M^{g_i}$ is $T_{(G,*)}$-equivalent to one of the following $(G,*)$-algebras:
 \begin{center} $N$, $C\oplus N$, $C_{2,*}^{{g}_i}\oplus N$, $C_{2}^{{g}_i}\oplus N$, $A_{2}^{\mathsf{g}_i}\oplus N$, \\ $C_{2,*}^{{g}_i} \oplus C_{2}^{{g}_i}\oplus N$,    $C_{2,*}^{{g}_i} \oplus A_{2}^{\mathsf{g}_i}\oplus N$,  $C_{2}^{{g}_i} \oplus A_{2}^{\mathsf{g}_i}\oplus N$,  $C_{2,*}^{{g}_i} \oplus C_{2}^{{g}_i} \oplus A_{2}^{\mathsf{g}_i}\oplus N$, \end{center} where $N$ denotes a nilpotent $(G,*)$-algebra and $C$ is a commutative \Gstar algebra with trivial $G$-grading and trivial involution.

     \item[3)]There exists $\alpha \geq 0$ such that $c_n^{(G,*)}(A)\leq \alpha n$, for all $n\geq 1.$ 
     
 \end{enumerate}
\end{teorema}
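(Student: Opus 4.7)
The plan is to establish the three-way equivalence through the cycle $(3) \Rightarrow (1) \Rightarrow (2) \Rightarrow (3)$. The implication $(3) \Rightarrow (1)$ is a calibration: by Lemmas \ref{teoAk*}--\ref{ultimolema} and Corollary \ref{corolarioquad}, each $Q \in \mathcal{M}$ satisfies $c_n^{(G,*)}(Q) \approx q n^2$ with $q > 0$, so $Q \in \textnormal{var}^{(G,*)}(A)$ would force $c_n^{(G,*)}(A) \geq c_n^{(G,*)}(Q)$, eventually exceeding any linear bound. Dually, $(2) \Rightarrow (3)$ is obtained by direct computation: each summand ($C_{2,*}^{g_i}, C_2^{g_i}, A_2^{\mathsf{g}_i}, A_2^*, C$, or $N$) generates a variety of at most linear codimension growth by Theorem \ref{minimaislinear}, and the codimension of a finite direct sum is bounded by the sum of the codimensions.

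The substantive direction is $(1) \Rightarrow (2)$. The first step is to reduce to the case $A = F + J(A)$. By Remark \ref{12}, the exclusion of $C_{3,*}^{1}, C_{3,*}^{h}, C_3^{h} \in \mathcal{M}$ forces $(FC_2)_*, (FC_2)^{(G,*)}, FC_p^G \notin \textnormal{var}^{(G,*)}(A)$; analogously, the exclusion of $N_3^*, U_3^* \in \mathcal{K}_1$ and of $N_3^{\mathsf{g}}, U_3^{\mathsf{g}}, A_3^{\mathsf{g}} \in \mathcal{I}_{10}$ forces $M_{1,\rho}, M_{g,\rho} \notin \textnormal{var}^{(G,*)}(A)$ for every $g \in G - \{1\}$. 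Theorem \ref{polynomialgrowth} then yields a $T_{(G,*)}$-equivalence $A \sim_{T_{(G,*)}} B_1 \oplus \cdots \oplus B_r$ with $\dim_F B_i/J(B_i) \leq 1$, so each summand is either nilpotent (absorbed eventually into $N$) or of the form $F + J(B_i)$.

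Since $\mathcal{I} \subset \mathcal{M}$, Lemma \ref{306} applies to each non-nilpotent $B_i = F + J$, giving $B_i \sim_{T_{(G,*)}} \bigoplus_{g \in G} B_i^g$ with $B_i^h \in \textnormal{var}^{(G,*)}(M_{h,\rho})$ for every $h \in G - \{1\}$. For such $h$, Theorem \ref{31} lists the possible $T_{(G,*)}$-equivalence classes for $B_i^h$; the exclusion of $M_{h,\rho}$ itself (established above) and of $A_3^{\mathsf{h}}, N_3^{\mathsf{h}}, U_3^{\mathsf{h}}$ forces the indices $m$ and $t$ appearing there to be at most $2$. A direct identification of the defining subalgebras (matching dimensions, the grading $\mathsf{h} = (1, h, 1, h)$, and the symmetric/skew type of the degree-$h$ generator under the reflection involution) yields the $T_{(G,*)}$-equivalences $N_2^{\mathsf{h}} \sim C_{2,*}^{h}$ and $U_2^{\mathsf{h}} \sim C_2^{h}$, placing every $B_i^h$ in the form prescribed by (2). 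The trivially graded component $B_i^1$ is a $*$-algebra in the ordinary sense, and the exclusion of $\mathcal{K}_1$ together with the remark that $\textnormal{Id}^{(G,*)}(M_{6,\omega_1}^{1,1}) \subset \textnormal{Id}^{(G,*)}(U_3^*)$ must be shown to force $B_i^1$ to be $T_{(G,*)}$-equivalent to a direct sum of copies of $C_{2,*}^{1}$, $A_2^{*}$, and a nilpotent algebra. Finally, the nilpotent pieces of every $B_i$ are collected into a single nilpotent $N$.

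The main obstacle is the analysis of $B_i^1$. One needs to verify that no trivially-graded $*$-algebra beyond $F$, $C_{2,*}^1$, and $A_2^*$ can contribute irreducibly, which amounts to repeating, purely for $*$-algebras, the radical decomposition arguments of Section 4: each structural possibility that fails to split into copies of $C_{2,*}^1$ and $A_2^*$ must be shown to produce some element of $\mathcal{K}_1$ inside $\textnormal{var}^{(G,*)}(A)$, using the explicit $T_{(G,*)}$-identities of the algebras in $\mathcal{K}_1$. Alternatively, this step can be streamlined by invoking the known classification of minimal $*$-varieties of linear growth from \cite{lamattinamisso} and transferring the outcome via Lemma \ref{codimensoes}; either way, the bookkeeping of which algebra in $\mathcal{K}_1$ rules out which configuration of $(J_{11})_1$, $(J_{10})_1$, $(J_{01})_1$, and $(J_{00})_1$ is the crux.
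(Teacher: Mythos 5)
Your proposal is correct and follows essentially the same route as the paper: reduce via Remark \ref{12} and Theorem \ref{polynomialgrowth} to summands that are nilpotent or of the form $F+J$, split each such summand by Lemma \ref{306}, and settle the graded components via Theorem \ref{31} together with the identifications $N_2^{\mathsf{h}}\sim C_{2,*}^{h}$, $U_2^{\mathsf{h}}\sim C_2^{h}$ for $h\neq 1$. The step you flag as the main obstacle is resolved in the paper exactly by your second, ``streamlined'' option: the trivially graded component $P_i^1$ is handled by citing \cite[Theorem 37]{lamattinamisso} directly, using that no algebra of $\mathcal{K}$ lies in $\textnormal{var}^{(G,*)}(A)$.
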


\begin{proof}
Since the other implications are immediate, we focus our attention on prove that condition $1)$ implies condition $2)$ and so we assume that $Q\notin \textnormal{var}^{G}(A)$, for all $Q\in  \mathcal{M}$. By Remark \ref{12} and Theorem \ref{polynomialgrowth} we may assume that $A= B_1 \oplus \cdots \oplus B_m$, where for $1\leq i\leq m$, $B_i$ is a finite-dimensional $(G, *)$-algebra over $F$ with either $B_i$ is nilpotent or $B_i\cong F+J(B_i)$. If $B_i$ is nilpotent for all $i$ then we are done. Therefore, we assume that there exists $1\leq i\leq m$ such that $B_i=F+J(B_i),$ where $J(B_i)= J_{00}+J_{10}+J_{01}+J_{11}$ as given in (\ref{6}).

Since $Q\notin \textnormal{var}^{G}(A)$, for all $Q\in  \mathcal{I}$, then, by Lemma \ref{306}, for each $i = 1,\ldots , m$ we have $$B_i\sim_{T_{(G,*)}} \displaystyle {\bigoplus_{g\in G} }P_i^g,$$ where $P_i^g=F +(J_{11})_1^+ +J(B_i)_g$ and $P_i^h\in \textnormal{var}^{(G,*)}(M_{h,\rho})$, for all $h\in G-\{1\}$. As long as $Q\notin \textnormal{var}^{G}(A)$, for all $Q\in \mathcal{K}$, by \cite[Theorem 37]{lamattinamisso},  $P_i^1$ is $T_{(G,*)}$-equivalent to one of the following $(G,*)$-algebras:  $$N,\, C\oplus N, \,C_{2,*}^{1}\oplus N,\, A_{2}^*\oplus N\mbox{ or } C_{2,*}^{1}\oplus A_{2}^* \oplus N$$ where $N$ is a nilpotent $(G,*)$-algebra with trivial $G$-grading and $C$ is a commutative algebra with trivial involution and trivial $G$-grading.

Now, we fix $h\in G-\{1\}$ and recall that $N_{2}^\mathsf{h}\sim_{T_{(G,*)}} C_{2,*}^{h}$ and $U_{2}^\mathsf{h}\sim_{T_{(G,*)}} C_2^{h}$. Since $A_3^{\mathsf{h}},N_3^{\mathsf{h}}, U_3^{\mathsf{h}}\notin \textnormal{var}^{(G,*)}(A)$ then, by Theorem  \ref{31}, $P_i^h$ is $T_{(G,*)}$-equivalent to one of the following $(G,*)$-algebras:

\begin{center}
$N$, $C\oplus N$, $C_{2,*}^{h} \oplus N$,  $C_{2}^{h} \oplus N$, $A_{2}^{\mathsf{h}} \oplus N$, \\  $C_{2,*}^{h} \oplus C_2^{h} \oplus N$, $C_{2,*}^{h} \oplus A_{2}^{\mathsf{h}} \oplus N$, $C_2^{h} \oplus A_{2}^{\mathsf{h}} \oplus N$, $C_{2,*}^{h} \oplus C_2^{h} \oplus A_{2}^{\mathsf{h}} \oplus N.$
\end{center}

Finally, for all $g_j\in G$, we define $M^{g_j}= \displaystyle \bigoplus_{1\leq i \leq m} P_i^{g_j}$. Recalling that $A=B_1 \oplus \cdots \oplus B_m$ with either $B_i$ is a nilpotent $(G,*)$-algebra or $B_i\sim_{T_{(G,*)}} \displaystyle {\bigoplus_{g\in G} }P_i^g$ as above then $A$ is $T_{(G,*)}$-equivalent to $M^{g_1} \oplus \cdots \oplus  M^{g_t}\oplus N$, where $N$ is a nilpotent $(G,*)$-algebra. Therefore, the proof follows.
\end{proof}

As a consequence of the previous theorem, we can present the classification of minimal $(G,*)$-varieties of quadratic growth, extending Theorem \ref{minimaislinear}.

\begin{corolario} \label{minimais}
     Let $A$ be a finite-dimensional $(G,*)$-algebra of quadratic growth over a field $F$ of characteristic zero. Then $A$ generates a minimal variety if and only if $A\sim_{T_{(G,*)}} Q$, for some $Q\in \mathcal{M}$.
\end{corolario}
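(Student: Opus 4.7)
\textbf{Plan for the proof of Corollary \ref{minimais}.}

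The plan is to derive the corollary directly from Theorem \ref{1234} and Remark \ref{min}, since the real structural work has already been done there; the corollary is essentially a reformulation of Theorem \ref{1234} in terms of minimality. First, for the sufficient direction, suppose $A\sim_{T_{(G,*)}} Q$ for some $Q\in \mathcal{M}$. By Lemmas \ref{teoAk*}--\ref{ultimolema} and Corollary \ref{corolarioquad}, every algebra in $\mathcal{M}$ satisfies $c_n^{(G,*)}\approx q n^{2}$, so $\mathcal{V}:=\textnormal{var}^{(G,*)}(A)$ has quadratic growth. To verify minimality, take an arbitrary proper subvariety $\mathcal{U}\subsetneq \mathcal{V}$ and show $c_n^{(G,*)}(\mathcal{U})\leq \alpha n$. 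Suppose, for contradiction, that some $Q'\in \mathcal{M}$ lies in $\mathcal{U}$; then from $\textnormal{Id}^{(G,*)}(Q)\subsetneq \textnormal{Id}^{(G,*)}(\mathcal{U})\subseteq \textnormal{Id}^{(G,*)}(Q')$ we get a strict inclusion $\textnormal{Id}^{(G,*)}(Q)\subsetneq \textnormal{Id}^{(G,*)}(Q')$. By Remark \ref{min} no such inclusion can hold between distinct elements of $\mathcal{M}$, so $Q'=Q$; but then $\mathcal{V}\subseteq \mathcal{U}$, contradicting properness. Hence no element of $\mathcal{M}$ belongs to $\mathcal{U}$, and Theorem \ref{1234} yields $c_n^{(G,*)}(\mathcal{U})\leq \alpha n$, as required.

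For the necessary direction, suppose $\mathcal{V}=\textnormal{var}^{(G,*)}(A)$ is minimal of quadratic growth. Since $c_n^{(G,*)}(A)\approx \alpha n^{2}$ is not bounded by a linear function of $n$, condition 3) of Theorem \ref{1234} fails for $A$, so condition 1) must also fail: there exists $Q\in \mathcal{M}$ with $Q\in \mathcal{V}$. Then $\textnormal{var}^{(G,*)}(Q)\subseteq \mathcal{V}$, and because $c_n^{(G,*)}(Q)\approx q n^{2}$ grows quadratically, minimality forces this inclusion to be an equality (otherwise the proper subvariety $\textnormal{var}^{(G,*)}(Q)$ would have growth $n^{p}$ with $p<2$, contradicting its quadratic growth). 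Therefore $A\sim_{T_{(G,*)}} Q$.

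The one technical point that needs care is the application of Theorem \ref{1234} to $\mathcal{U}$ in the sufficiency part, since that theorem is stated for finite-dimensional $(G,*)$-algebras, whereas $\mathcal{U}$ is a priori just a subvariety. This is handled by the standard fact (essentially Theorem \ref{polynomialgrowth}, part 3) that any subvariety of a variety of polynomial growth is generated by a finite-dimensional $(G,*)$-algebra, obtained as a direct sum of suitable quotients of $A$ by $T_{(G,*)}$-ideals. I expect this to be the only genuine obstacle; once this reduction is in place, the rest is a clean bookkeeping exercise combining Remark \ref{min} (which ensures the algebras in $\mathcal{M}$ are pairwise incomparable in the $T_{(G,*)}$-order) with the dichotomy provided by Theorem \ref{1234} (quadratic growth forces some $Q\in \mathcal{M}$ inside the variety, while linear-or-less growth excludes all of them).
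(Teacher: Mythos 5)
Your proposal is correct and follows essentially the same route as the paper's own proof: both directions rest on the dichotomy of Theorem \ref{1234} combined with the pairwise incomparability of the algebras in $\mathcal{M}$ from Remark \ref{min}. Your explicit attention to why Theorem \ref{1234} may be applied to the proper subvariety $\mathcal{U}$ (reducing to a finite-dimensional generator via Theorem \ref{polynomialgrowth}) is a point the paper passes over silently, but it does not change the argument.
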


\begin{proof}
    Assume that $\textnormal{var}^{(G,*)}(A)$ is a minimal $(G,*)$-variety of quadratic growth. By Theorem \ref{1234}, there exists $Q\in \mathcal{M}$ such that $Q\in \textnormal{var}^G(A)$ and $Q$ has quadratic growth, by Remark \ref{min}. Thus, since $A$ generates a minimal $(G,*)$-variety then we have $\textnormal{var}^{(G,*)}(A)=\textnormal{var}^{(G,*)}(Q)$. 
    
  Now, suppose that $A\sim_{T_{(G,*)}} Q$, for some $Q\in \mathcal{M}$, and consider $\mathcal{U}\subsetneq \textnormal{var}^{(G,*)}(A)$ a proper subvariety. By Remark \ref{min} we have that $\textnormal{Id}^G(A)\not\subset \textnormal{Id}^G(B)$, for all $B\in \mathcal{M}$ with $B\neq Q$, and so $\textnormal{Id}^G(\mathcal{U})\not\subset \textnormal{Id}^G(S)$, for all $S\in \mathcal{M}$. By Theorem \ref{1234}, this implies that $\mathcal{U}$ is a variety of at most linear growth and then we are done.
\end{proof}

In the meantime we consider $G\cong \mathbb{Z}_2$ generated by an element $g$ with $g^2=1$ and define the following set of $*$-superalgebras: 
\begin{center}
    $\mathcal{L}=\{M_{4,\rho}^{u}, M_{5,\rho}^{u},$ $M_{i,\rho}^{u,v},  M_{j, \rho}^{p,q}, M_{6,\omega_1}^{u,v}, M_{7,\omega_2}^{u,v}$, $M_{11,\rho}^{g,u}$,  $ C_{3,*}^u, C_{3}^g, \mathcal{G}_{2, \diamond }^{g,g}, \mathcal{G}_{2, \imath}^{1,g},\mathcal{G}_{2, \tau}^{1,1}, W_{\nu}^{g,g}, W_{\eta}^{1,g}, U_3^*, N_3^*, A_3^\mathsf{g}, N_3^\mathsf{g}, U_3^\mathsf{g} \}-\{M_{6,\omega_1}^{1,1}\}$,
\end{center} for all $i=8,9,10$; $u,v,p,q\in G$, $p\neq q$;
$j=6,7$; $\diamond \in  \{\tau, \gamma, \psi\}$, $\imath \in \{\tau, \gamma\}$, $\nu\in \{\nu_1, \nu_2, \nu_3\}$, $\eta \in \{\nu_2, \nu_3\}$.

In conclusion, we have the following corollary which corresponds to Corollary $7.2$ of \cite{mallu}.

\begin{corolario}
    The $*$-superalgebras in $\mathcal{L}$ generate the only minimal $(\mathbb{Z}_2, *)$-varieties of quadratic growth.
\end{corolario}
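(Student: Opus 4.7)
The plan is to derive the corollary as a direct specialization of Corollary \ref{minimais} to $G \cong \Z_2 = \{1,g\}$ with $g^2 = 1$. By that corollary, a finite-dimensional $(\Z_2,*)$-algebra $A$ of quadratic growth generates a minimal variety if and only if $A \sim_{T_{(\Z_2,*)}} Q$ for some $Q \in \mathcal{M} = \mathcal{I} \cup \mathcal{K}_1 \cup \mathcal{I}_{10}$. It therefore suffices to verify that the members of $\mathcal{M}$, when $G = \Z_2$, coincide modulo $T_{(\Z_2,*)}$-equivalence with $\mathcal{L}$.

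I would do this by a mechanical case-by-case comparison. For $G = \{1,g\}$, every quantifier $g,h \in G$ ranges over four pairs and every restriction $g \in G - \{1\}$ collapses to the single value $g$. Unwinding the component sets: $\mathcal{I}_1 \cup \mathcal{I}_2$ contributes $M_{4,\rho}^{u}, M_{5,\rho}^{u}$ for $u \in G$; the $M_{8,\rho}, M_{9,\rho}$ families (from $\mathcal{I}_3, \mathcal{I}_4$) together with the $M_{10,\rho}$ part of $\mathcal{I}_7$ produce all $M_{i,\rho}^{u,v}$ for $i \in \{8,9,10\}$ and $u,v \in G$; $\mathcal{I}_5 \cup \mathcal{I}_6$ supplies $M_{j,\rho}^{p,q}$ for $j \in \{6,7\}$ with $p \neq q$ together with the entire $\omega_1, \omega_2$-families $M_{6,\omega_1}^{u,v}, M_{7,\omega_2}^{u,v}$; $\mathcal{I}_8$ yields $M_{11,\rho}^{g,u}$ for $u \in G$ (the constraint $g \neq 1$ forces the first index to be the non-identity) together with $C_3^{g}$; $\mathcal{I}_7$ adds $C_{3,*}^{u}$ for $u \in G$; $\mathcal{I}_9$ supplies the Grassmann algebras $\mathcal{G}_{2,\diamond}^{g,g}$ for $\diamond \in \{\psi,\tau,\gamma\}$ and $\mathcal{G}_{2,\imath}^{1,g}$ for $\imath \in \{\tau,\gamma\}$, plus $W_\nu^{g,g}$ for $\nu \in \{\nu_1,\nu_2,\nu_3\}$ and $W_\eta^{1,g}$ for $\eta \in \{\nu_2,\nu_3\}$; $\mathcal{K}_1$ provides $\mathcal{G}_{2,\tau}^{1,1}, N_3^{*}, U_3^{*}$ and the remaining trivially graded $M^{1,1}$-algebras; finally $\mathcal{I}_{10}$ contributes $A_3^{\mathsf{g}}, N_3^{\mathsf{g}}, U_3^{\mathsf{g}}$.

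The one delicate point — and the main obstacle — is that $M_{6,\omega_1}^{1,1}$ formally belongs to $\mathcal{I}_5 \subset \mathcal{M}$ but is explicitly removed from $\mathcal{L}$. This is resolved by the remark preceding Corollary \ref{minimais}: we have $\textnormal{Id}^{(G,*)}(M_{6,\omega_1}^{1,1}) \subset \textnormal{Id}^{(G,*)}(U_3^{*})$, and combined with Remark \ref{min} (which asserts no such inclusion between distinct members of $\mathcal{M}$) this inclusion must be an equality, so $M_{6,\omega_1}^{1,1} \sim_{T_{(\Z_2,*)}} U_3^{*}$ and we may keep only one of the two. The rest is routine bookkeeping, ensuring no element of $\mathcal{L}$ is missed and that the parameter restrictions ($r \neq s$, $\diamond \neq \psi$, $\eta \neq \nu_1$, $g \neq 1$) match between the two descriptions.
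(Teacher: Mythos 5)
Your overall strategy is exactly the paper's: the corollary is obtained by specializing Corollary \ref{minimais} to $G\cong\mathbb{Z}_2$ and checking that the set $\mathcal{M}$ unwinds to $\mathcal{L}$, and your case-by-case matching of $\mathcal{I}_1,\ldots,\mathcal{I}_{10}$, $\mathcal{K}_1$ against the families in $\mathcal{L}$ is correct. However, your resolution of the one point you yourself flag as delicate is wrong. From $\textnormal{Id}^{(G,*)}(M_{6,\omega_1}^{1,1})\subset \textnormal{Id}^{(G,*)}(U_3^{*})$ together with Remark \ref{min} you conclude that the inclusion must be an equality, i.e.\ $M_{6,\omega_1}^{1,1}\sim_{T_{(\mathbb{Z}_2,*)}}U_3^{*}$. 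This applies Remark \ref{min} to an algebra whose membership in $\mathcal{M}$ is precisely the point at issue: the passage from $\mathcal{K}$ to $\mathcal{K}_1=\mathcal{K}-\{M_{6,\omega_1}^{1,1}\}$ (and the corresponding removal in the definition of $\mathcal{L}$) is there exactly because $M_{6,\omega_1}^{1,1}$ is \emph{not} meant to be counted among the generators, so Remark \ref{min} gives you no information about it. Moreover, an equality of $T$-ideals is in particular an inclusion, so reading the remark literally would rule out equality as well; the only coherent reading is that the inclusion is proper and $M_{6,\omega_1}^{1,1}$ lies outside $\mathcal{M}$.

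The correct argument for dropping $M_{6,\omega_1}^{1,1}$ is the opposite of yours: since $\textnormal{Id}^{(G,*)}(M_{6,\omega_1}^{1,1})\subsetneq \textnormal{Id}^{(G,*)}(U_3^{*})$, the variety $\textnormal{var}^{(G,*)}(U_3^{*})$ is a \emph{proper} subvariety of $\textnormal{var}^{(G,*)}(M_{6,\omega_1}^{1,1})$ of quadratic growth, so $M_{6,\omega_1}^{1,1}$ does not generate a minimal variety at all and must not appear in $\mathcal{L}$; it is also harmlessly redundant in condition 1) of Theorem \ref{1234}, since excluding $U_3^{*}$ from $\textnormal{var}^{(G,*)}(A)$ automatically excludes $M_{6,\omega_1}^{1,1}$. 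As it happens, the statement of the corollary survives under either reading (if the ideals were equal, $U_3^{*}\in\mathcal{L}$ would still represent that variety), so your error does not invalidate the final claim, but the justification you give for the exclusion is not the right one and asserts a $T_{(G,*)}$-equivalence that the paper neither claims nor needs.
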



\begin{thebibliography}{999}


\bibitem{Mallu} D. Bessades, W. Costa and M. L. O. Santos. \newblock{\em On unitary algebras with graded involution of quadratic growth}. Linear Algebra Appl. 689 (2024) 260-293.

\bibitem{Dafne} D. Bessades, M. L. O. Santos, R. B. dos Santos and A. C. Vieira. \newblock{\em Superalgebras and algebras with involution: classifying varieties of quadratic growth.} Comm. Algebra. 49 (2021) 2476-2490.


 \bibitem{WesAnaRaf} W. Q. Cota, R. B. dos Santos and  A. C. Vieira. \newblock{\em Algebras with graded involution and $(G,*)$-colength bounded by $4$.} Submitted.

\bibitem{Drensky} V. Drensky. \newblock{ Free algebras and PI-algebras.} \newblock Singapore: Springer-Verlag. 2000.

\bibitem{GLa} 
A. Giambruno and D. La Mattina. \newblock{\em PI-algebras with slow codimension growth.} J. Algebra. {284} (2005) 371-391.

\bibitem{GLMisso}  A. Giambruno, D. La Mattina and P. Misso. \newblock{\em Polynomial identities on superalgebras: classifying linear growth}. J. Pure Appl. Algebra. 207 (2006) 215-240.

\bibitem{Petro} A. Giambruno, D. La Mattina and V. Petrogradsky. \newblock{\em Matrix algebras of polynomial codimension growth}. Isr. J. Math. 158 (2007) 367-378


\bibitem{Minimais} A. Giambruno, D. La Mattina and M. Zaicev. \newblock{\em Classifying the minimal varieties of polynomial growth}. Can. J. Math. 66 (3) (2014) 625-640.

\bibitem{GZlivro} A. Giambruno and M. Zaicev, 
\label{livroGZ}
\newblock {\em Polynomial identities and asymptotic methods}. Math. Surveys Monogr., vol. 122, Amer. Math. Soc., Providence, RI, 2005.

\bibitem{Giambruno} A. Giambruno and M. Zaicev. \newblock{\em Asymptotics for the standard and the Capelli identities.} Isr. J. Math. {135} (2003) 125-145.


\bibitem{tatiana} T. A. Gouveia, R. B. dos Santos and A. C. Vieira. \newblock{\em Minimal $*$-varieties and minimal supervarieties of polynomial growth}. J. Algebra. 552 (2020) 107-133.

\bibitem{mallu} A. Ioppolo, R. B. dos Santos, M. L. O. Santos and A. C. Vieira. \newblock{\em Superalgebras with graded involution: classifying minimal varieties of quadratic growth}. Linear Algebra Appl. 621 (2021) 105-134.


\bibitem{Kem} A. R. Kemer. \newblock{\em Varieties of finite rank}. Proc. 15th All the Union Algebraic Conf., Krasnoyarsh.  2 (1979) (in Russian).


\bibitem{LamaMartino} D. La Mattina and F. Martino.
\newblock{\em Polynomial growth and star-varieties.} J. Pure App. Algebra. 220 (2016) 246-262.

\bibitem{Lamattina} D. La Mattina, S. Mauceri and P. Misso. \newblock{\em Polynomial growth and identities of super-algebras and star-algebras}. J. Pure Appl. Algebra. 213 (11) (2009) 2087-2094.


		
\bibitem{lamattinamisso}  D. La Mattina and P. Misso. \newblock{\em Algebras with involution with linear codimension growth}. J. Algebra. 305
(2006) 270-291.

\bibitem{MN} D. La Mattina, T. S. do Nascimento and A. C. Vieira.	\newblock{\em Minimal star-varieties of polynomial growth and bounded colength}. J. Pure App. Algebra. 222 (2018) 1765-1785.


\bibitem{Mara} M. A. de Oliveira, R. B. dos Santos and A. C. Vieira. \newblock{\em Polynomial growth of the codimensions sequence of algebras with group graded involution.} Isr. J. Math. 1 (2023) 1-27.

	\bibitem{Mara2}  M. A. de Oliveira, R. B. dos Santos and A. C. Vieira. \newblock{\em The colength sequence of algebras with graded involution and polynomial growth}. To appear in Comm. Algebra.

\bibitem{Mara3} M. A. de Oliveira and A. C. Vieira. \newblock{\em Varieties of unitary algebras with small growth of codimensions}. Int. J. Algebra Comput. 31 (2021) 257-277.

  \bibitem{Lorena} L. M. Oliveira, R. B. dos Santos and A. C. Vieira. \newblock{\em Varieties of group graded algebras with graded involution of almost polynomial growth.} Algebr. Represent. Theory. 26 (2023) 663-677.


\bibitem{RG} A. Regev. \newblock{\em Existence of identities in $A\otimes B$}. Isr. J. Math. 11 (1972) 131-152.

\bibitem{RG1} A. Regev. \newblock{\em Codimensions and trace codimensions of matrices are asymptotically
equal}. Isr. J. Math. 47 N. 2-3 (1984) 246-250.


\bibitem{Sagan} B. E. Sagan.
		\newblock{ The symmetric group - representations, combinatorial algorithms and symmetric
			functions.}
		\newblock 1st ed. Belmont (CA): Wadsworth. 1991.


\bibitem{Valenti} A. Valenti. \newblock{\em Group graded algebras and almost polynomial growth}. J. Algebra. 334 (2011) 247-254.
		
	\end{thebibliography}
 \end{document}